\documentclass{article}
\usepackage[a4paper, margin=25mm]{geometry}
\usepackage{amsmath,amssymb}
\usepackage{eucal}

\usepackage[raiselinks=false,colorlinks=true,citecolor=blue,urlcolor=blue,linkcolor=blue,bookmarksopen=true]{hyperref}

\usepackage{manfnt}
\usepackage{tikz-cd}
\usetikzlibrary{arrows}

\tikzset{
    onto/.style={/tikz/commutative diagrams/twoheadrightarrow}
}

\usepackage[hang,flushmargin]{footmisc}

\newcommand\blfootnote[1]{%
  \begingroup
  \renewcommand\thefootnote{}\footnote{#1}%
  \addtocounter{footnote}{-1}%
  \endgroup
}

\DeclareSymbolFont{bbold}{U}{bbold}{m}{n}
\DeclareSymbolFontAlphabet{\mathbbold}{bbold}

\newcommand{\one}{\mathbbold{1}}

\newcommand{\commutes}{\texttt{"}}

\newcommand{\strict}{\texttt{\footnotesize str-pbk}}
\newcommand{\ho}{\texttt{\footnotesize ho-pbk}}

\newcommand{\ground}{k}
\newcommand{\pari}{\operatorname{par}}

\newcommand{\e}{\mathsf{e}}

\newcommand{\lin}{\kat{lin}}
\newcommand{\linpm}{\kat{lin}^\pm}

\newcommand{\FF}{\mathcal{F}}
\newcommand{\ZZ}{\mathcal{Z}}

\newcommand{\B}{\mathbb{B}}
\newcommand{\F}{\mathbb{F}}
\renewcommand{\S}{\mathbb{S}}
\newcommand{\N}{\mathbb{N}}
\newcommand{\Z}{\mathbb{Z}}
\newcommand{\Q}{\mathbb{Q}}

\newcommand{\CC}{\mathcal{C}}
\newcommand{\DD}{\mathcal{D}}

\newcommand{\isopil}{\stackrel{\raisebox{0.1ex}[0ex][0ex]{\(\sim\)}}%
			{\raisebox{-0.15ex}[0.28ex]{\(\rightarrow\)}}}

\providecommand{\kat}[1]{\text{\textbf{\textsl{#1\/}}}}

\newcommand{\vect}{\kat{vect}}
\providecommand{\basis}{\textbf{u}}
\newcommand{\name}[1]{\ulcorner #1\urcorner}

\newcommand{\pp}{{\raisebox{1.2pt}{\text{\tiny \(\oplus\)}}}}
\newcommand{\mm}{{\raisebox{1.2pt}{\text{\tiny \(\ominus\)}}}}

\renewcommand{\P}{\mathbb{P}}

\newcommand{\tensor}{\otimes}
\newcommand{\atensor}{\circledast}

\newcommand{\Fun}{\operatorname{Fun}}
\newcommand{\Aut}{\operatorname{Aut}}

\newcommand{\id}{\operatorname{id}}

\DeclareRobustCommand\upperstar{%
  \mathchoice%
    {\kern0pt\raise0.55ex\hbox{$\displaystyle *$}\kern0.8pt}
    {\kern0pt\raise0.58ex\hbox{$\textstyle *$}\kern0.8pt}
    {\kern0pt\raise0.45ex\hbox{$\scriptstyle *$}\kern0.4pt}
    {\kern0pt\raise0.4ex\hbox{$\scriptscriptstyle *$}\kern0.2pt}
}%
\DeclareRobustCommand\lowerstar{%
  \mathchoice%
    {\kern0pt\raise-0.65ex\hbox{$\displaystyle *$}\kern0.8pt}
    {\kern0pt\raise-0.68ex\hbox{$\textstyle *$}\kern0.8pt}
    {\kern0pt\raise-0.55ex\hbox{$\scriptstyle *$}\kern0.4pt}
    {\kern0pt\raise-0.5ex\hbox{$\scriptscriptstyle *$}\kern0.2pt}
}%

\newcommand{\lowershriek}{_!}

\newcommand{\scalars}{\kat{End}(\ZZ_{/\e})}
\newcommand{\innerprod}[2]{\langle#1,#2\rangle}

\newcommand{\dlpullback}{\arrow[phantom]{dl}[very near start,description]{\llcorner}}

\newcommand{\ddlpullback}{\arrow[phantom]{ddl}[very near start,description]{\llcorner}}

\newcommand{\Id}{\operatorname{Id}}
\newcommand{\Map}{\operatorname{Map}}
\newcommand{\Det}{\operatorname{Det}}
\newcommand{\Adj}{\operatorname{Adj}}

\newcommand{\Grpd}{\kat{Grpd}}

\newcommand{\grpd}{\FF}

\newcommand{\spand}{\kat{span}}

\providecommand{\norm}[1]{|\kern0.5pt{#1}\kern0.5pt|}
\providecommand{\Norm}[1]{\left| {#1}\right|}
\renewcommand{\epsilon}{\varepsilon}

\newcommand{\sign}{\operatorname{sign}}

\usepackage{scalerel}

\DeclareRobustCommand\Hak[1]{%
  \mathchoice%
    {\raise-0.3ex\hbox{\scaleto{\wedge}{9.2pt}}\kern-1.3pt^{#1}\kern-0.7pt}
    {\scaleto{\wedge}{7.4pt}\kern-0.9pt^{#1}\kern-0.4pt}
    {\scaleto{\wedge}{4.6pt}\kern-0.9pt^{#1}\kern-1.3pt}
    {\wedge\kern-0.7pt^{#1}\kern-0.7pt}
}%

\newcommand{\op}{^{\text{{\rm{op}}}}}

\newcommand{\Inj}{\operatorname{Inj}}

\usepackage{amsthm}

\newtheorem{lemma}{Lemma}[subsection]
\newtheorem{prop}[lemma]{Proposition}

\newtheorem{theorem}[lemma]{Theorem}
\newtheorem{cor}[lemma]{Corollary}

\newtheorem{taller}[lemma]{$\!\!$}

\newenvironment{blanko}[1]%
{\begin{taller}{\normalfont\bfseries  #1}\normalfont}%
{\end{taller}}

\begin{document}

\title{Signs in objective linear algebra, \\ exemplified with
exterior powers and determinants}

\author{{\sc Joachim Kock\thanks{Primary affiliation: Universitat Aut\`onoma de 
Barcelona and Centre de Recerca Matem\`atica} } and {\sc Jesper Michael M\o ller}}

\date{\small K\o benhavns Universitet}

\maketitle



\begin{abstract}
  We develop objective linear algebra in a new setting with a
cardinality functor that can take negative values. The signs
arise as little homotopies, as ratios between orientations. To
illustrate the workings of the theory we give an objective
treatment of exterior powers and determinants.
\end{abstract}

\blfootnote{
		\raisebox{-0.1pt}{\includegraphics[height=1.65ex]{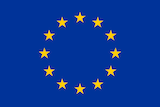}}
		This work was funded 2021--2023 by the European Union's Horizon 2020 research
        and innovation programme under Marie Sk\l odowska-Curie grant agreement 
        No.~101028099.\\
		\raisebox{-0.1pt}{\includegraphics[height=1.66ex]{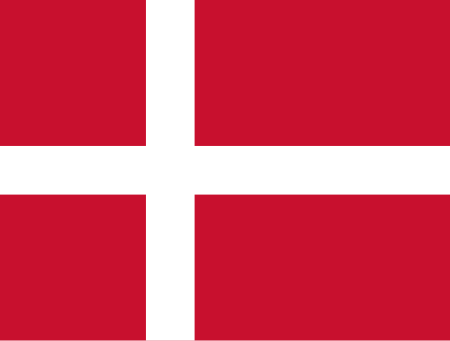}}
		This work was funded 2023--2025 by the Independent Research Fund 
		Denmark under grant  
        No.~10.46540/3103-00099B.\\
        Secondary funding sources: grant PID2024-158573NB-I00 (AEI/FEDER, UE) of Spain and grant
  2021-SGR-1015 of Catalonia, as well as the Severo Ochoa and Mar\'ia de
  Maeztu Program for Centers and Units of Excellence in R\&D grant number
  CEX2020-001084-M and the Danish National Research Foundation
  through the Copenhagen Centre for Geometry and Topology (DNRF151).
		}

        \begin{center}

        \begin{minipage}{120mm}
                       \small
    \tableofcontents
        \end{minipage}
        
        \vfill
        
\end{center}

\section{Introduction}

\subsection{Background}

{\em Motivation: From numbers to structure.} The elementary arithmetic
operations of the natural numbers -- addition, multiplication and exponentiation --
are shadows of categorical operations with finite sets -- disjoint union, cartesian
product and mapping sets. Working at the objective level (working with objects
themselves instead of their number) to the extent possible has several benefits. First
of all, functorialities and universal properties can be exploited to reveal more
details. Contrary to numbers, sets have elements which can be accessed individually,
sets can support further structure, and they have symmetries, leading to homotopy
theory. Furthermore, at the objective level, one can actually work with infinite sets
instead of just finite ones, thus reaching realms that are beyond the scope of
numbers. The passage from natural numbers to finite sets was exploited recently by the
first-named author to solve a longstanding open problem in theoretical computer
science, namely the unification of algebraic and geometric operational semantics of Petri
nets~\cite{Kock:2005.05108}.

A major limitation of the objective viewpoint is the lack of negatives, essential in
algebra not just for additive inversion, but also for most other kinds of inversion,
such as matrix inversion or power series inversion. It is thus a fundamental challenge
to provide a combinatorial model for negative numbers, giving a meaning to the
illusive notion of negative sets -- sets with a negative numbers of elements. A couple
of approaches have been proposed over the past 50 years, as explained in more detail
in the `Related work' section below, but the lacking categorical features of these
attempts prevented them from developing very far. One very desirable feature
of any category containing negative sets is that it should be sufficiently topos-like,
so that objective linear algebra can be developed over it.

{\em Objective linear
algebra}~\cite{Baez-Hoffnung-Walker:0908.4305},~\cite{Galvez-Kock-Tonks:1602.05082} is
basically linear algebra where the scalars are finite sets instead of numbers. Vector
spaces are replaced by slice categories; linear maps given by matrices are replaced by
colimit-preserving functors, in turn given by spans, so that matrix multiplication
becomes pullback-composition of spans. Within this setting, one can develop an
objective theory of certain basic parts of algebraic combinatorics, such as incidence
algebras and
bialgebras~\cite{Galvez-Kock-Tonks:1512.07573,Galvez-Kock-Tonks:1512.07577,Galvez-Kock-Tonks:1512.07580,Galvez-Kock-Tonks:1612.09225}
and generating functions~\cite{Joyal:1981,Joyal:foncteurs-analytiques},
\cite{Gambino-Kock:0906.4931}, \cite{Gepner-Haugseng-Kock:1712.06469}. Importantly,
the theory works more generally with scalars in groupoids or $\infty$-groupoids. This
extension allows to take into account symmetries and model nonnegative rational
coefficients. The important requirement for this extension is that the base category
be topos-like: at least it should have finite limits and colimits and be locally
cartesian closed and extensive (which implies distributive). There is a (homotopy)
cardinality functor which takes all objective notions to their numerical counterpart.
Altogether it is a systematic theory for lifting algebra with natural numbers
(resp.~nonnegative rationals) to the objective level, and it is a blueprint for what
we would like to have also for the integers (resp.~the rational numbers).

In this work we introduce a notion of `negative sets' meeting the topos-like
desideratum, and develop some objective linear algebra with signs. We get as far as
covering exterior powers and determinants (as well as matrix inversion, to some extent).
To this end we exploit some elementary homotopy theory: we find it necessary to
work with finite groupoids instead of just finite sets, and in the end the signs
arise as little homotopies.

Before outlining the approach, a comment is due as to why homotopy theory must 
necessarily be involved and the difficulty in achieving a 
topos-like category.

{\em Group completion and the sphere spectrum.}
 Just as the integers are constructed from the natural numbers by 
formally inverting the plus operation, one can pose the question of formally inverting
the disjoint-union operation on finite sets. The answer -- group completion -- is a 
central concept in homotopy theory.
It is a classical theorem
(Quillen, Barratt--Priddy~\cite{Barratt-Priddy}) that the group completion
of the groupoid of finite sets ${\B}$ is the {\em sphere spectrum} ${\S}$
(which is an $\infty$-groupoid rather than an ordinary groupoid).

Importantly, $\S$ has a multiplication (i.e.~is a ring spectrum) which
extends the product operation on $\B$ (just as the multiplication of $\Z$
extends that of $\N$). In fact, $\S$ is the initial
ring spectrum, just as $\Z$ is the initial ring in ordinary algebra.
This led to the slogan {\em the sphere spectrum is
the true integers}. Whatever the definition of signed finite sets, it must
relate meaningfully to the sphere spectrum.

In spite of its important universal property, the sphere spectrum is not quite
the solution to the challenge of finding a combinatorial notion of negative sets, because fundamental combinatorial features are lost
in the passage from $\B$ to $\S$.
For example, the map $\B \to \S$ collapses permutations to their sign: a
finite set $\underline{n}$ is sent to the object $n\in \pi_0(\S)= {\Z}$,
and a permutation $\tau:\underline n \isopil \underline n$ is sent to its
sign, $\epsilon(\tau) \in \pi_1({\S}) = {\Z}_2$. Furthermore, 
like the groupoid $\B$, the sphere spectrum does
not have anything like
surjections or injections (crucial in combinatorics for example for partitions
or binomial coefficients, respectively), and no meaningful slices for use in 
linear algebra. These features 
pertain to the {\em
category} $\F$ of finite sets and {\em all} maps.
There is a notion of group completion also for
monoidal categories that are not just groupoids, but one can show that inverting the
monoidal operation automatically also inverts arrows~\cite{Schanuel:negative}: a group
object in $\kat{Cat}$ is automatically a groupoid.
These facts indicate that group completion is too drastic an operation for the 
purpose of combinatorics.

Altogether there is a tension between combinatorics-like (topos-like) features of
$\F$, and algebra-like features of $\S$, and this tension is the main difficulty
in the whole enterprise. It is conceivable that the tension cannot be resolved 
uniformly.
and that different models for negative sets may be required for different applications
(As argued by Kapranov~\cite{Kapranov:1512.07042}, signs arise for 
various reasons, some of which
relate to the signs in $\Z=\pi_0 \S$, $\Z_2 = \pi_1(\S)$ and $\Z_2 = \pi_2(\S)$.)
But even without a uniform solution, it is still an important
endeavour to see how far one can stretch combinatorics into algebra.

\subsection{Contributions}

{\em Novelties in this work.}
The goal of this paper is to develop objective linear algebra in a setting where
there is a cardinality functor taking negative values.
We manage to resolve the above-mentioned tension by (1) working in a topos-like category
where slices, spans, pullbacks are
meaningful so that we can develop objective linear algebra comfortably, but (2) not
assigning negative cardinalities to its objects, but only to its scalars --
this notion makes sense relative to a monoidal structure which is not the cartesian 
product.
In ordinary linear algebra, a vector can be written as a linear combination of basis
vectors, such as $\sum_{b\in B} \lambda_b \basis_b$, where $B$ is the set indexing the
basis vectors $\basis_b$, and the $\lambda_b$ are scalars. So a vector is essentially
a $B$-indexed family of scalars. At the objective level, a `vector' is a map $E{\to}
B$ interpreted as a (finite) family of finite sets $(E_b\mid b\in B)$ (and the
linear-combination interpretation is the fact that the map $E{\to}B$ is isomorphic to
the sum $\sum_{b\in B} E_b \cdot (1{\stackrel{\name{b}}\to}B)$). We see that $B$ does
not need signs: we are not (currently) interested in considering a basis with a
negative number of elements. Where we need the signs is in the scalars, which arise as
fibres of a map.

In linear algebra over $\F$ there is no difference between scalar and object.
In the setting we develop, the main point is that such a difference arises: a scalar
is a linear endomorphism of the `ground field', and it will be slightly more than
just an object -- it will be an object together with some extra homotopy data which
we interpret as a sign.

Although the signs are present implicitly in the whole set-up,
they only really manifest themselves in the act of taking cardinality.
Herein lies one of the main discoveries: it is a cardinality functor, which at first sight 
looks a bit mysterious, but which turns out to enjoy a universal property.
The upshot is that although there may be some equal quantities of `positive' and
`negative', which one might think should cancel out, we do {\em not} divide out
by this cancellation, pretending it is the empty set. Rather this `irrelevant
junk' is kept through all the calculations, and only disappears when taking
cardinality. Keeping the junk and letting it take care of itself is what makes it
possible to achieve the topos-like category, whereas dividing out by it too
early would destroy important categorical properties.

\bigskip

Here is our implementation of these ideas. We need to work with finite groupoids 
instead of finite sets. Let $\FF$ denote the category of finite groupoids.\footnote{It is 
of course properly a $2$-category, and we will need the $2$-cells, but since it has only 
invertible $2$-cells, we will continue to write just `category'.}
(Finite $\infty$-groupoids (i.e.~$\pi$-finite spaces) are
also fine, but to stay as close as possible to finite sets, we stick with groupoids.)
The base category is the weak slice $\ZZ:=\FF_{/\P}$, where $\P = B O(1)$ is 
the classifying space of the cyclic group with two elements, $O(1) = C_2 = \{\pm1\}$.
It is the homotopy type of infinite real projective space.
Objects in $\ZZ$ are thus $f:S\to\P$, finite groupoids $S$ over $\P$, meaning 
that every arrow in
$S$ has a {\em parity}, namely even or odd. We insist on the terminology `even' and `odd' 
to stress that these quantities are not yet the signs we are after.
Parities are the structure with respect to which we will define 
orientations; the signs will finally arise as ratios between 
orientations.

Objective linear algebra works over $\ZZ$ quite similarly to how it works over $
\FF$. This leads to the category $\linpm$ whose objects are slices $\ZZ_{/f}$
(for all finite groupoids-with-parity-structure $f:S\to\P$) and whose morphisms are given by spans
over $\P$. 

The usual base category $\FF$ embeds into $\ZZ$ by giving all arrows in a 
groupoid even parity.
The category $\ZZ$ has a monoidal structure $*$ which extends the cartesian product
in $\FF$, but which crucially is not itself the cartesian-product structure.
The neutral object for $*$ is the trivial parity structure $\e:1 \to \P$ (which 
is not the terminal object in $\ZZ$).
The monoidal structure $*$ induces the relevant monoidal structure 
$\circledast$ on $\linpm$,
so as to get a (strict) monoidal functor
$$
(\lin, \tensor, \FF) \to (\linpm, \circledast, \ZZ_{/\e} ) ,
$$
as required to extend ordinary objective linear algebra.

Here comes the first important point. We define a scalar to be a linear endofunctor
of $\ZZ_{/\e}$, that is, a span from $1$ to $1$ over $\P$. When we say over $\P$,
it is of course in the homotopy sense, which means that there is a $2$-cell
\[
\begin{tikzcd}[row sep={2em,between origins}, column sep={4.8em,between origins}]
  & S \ar[ld] \ar[rd] &  \\
  1 \ar[rd, "\e"'] & \rho & 1   . \ar[ld, "\e"] \\
  & \P &
\end{tikzcd}
\]
This $2$-cell amounts to a disjoint-sum splitting of the scalar $S$ into
two parts
$$
S = S_\pp \sqcup S_\mm ,
$$
so that scalars finally have a positive and a negative part.
We declare the cardinality of such a scalar $S$ to be
$$
\norm{S} := \norm{S_\pp} - \norm{S_\mm} .
$$
A more general linear functor (an $(I,J)$-indexed matrix over $\P$) 
will now have matrix entries which are scalars, and hence will have a cardinality 
which is a matrix over $\Q$, including signs.

These assignments should assemble into
a functor
$\norm{\ } : \linpm \to \vect_\Q $. In the groupoid case, this insight is due to
Baez, Hoffnung, and Walker~\cite{Baez-Hoffnung-Walker:0908.4305}.
In ordinary objective linear algebra~\cite{Galvez-Kock-Tonks:1602.05082}, the relevant
assignment is
\begin{eqnarray*}
  \lin & \longrightarrow & \vect_\Q  \\
  \FF_{/I} & \longmapsto & \Q_{\pi_0 I}
\end{eqnarray*}
sending the slice category over $I$ to the vector space spanned by the
connected components of $I$. In the signed case, the same assignment
could be made, but it would not be the correct one. To explain the
correct assignment we exploit the notion of orientation. An {\em orientation}
of a groupoid with parity structure $S \to \P$ is a factorisation through
$1 \to \P$ (the universal cover of $\P)$. A connected component is
orientable if and only if its points have no odd automorphisms. The
orientable locus of a groupoid $I$ is the full subgroupoid $I^\circ
\subset I$ consisting of the orientable points. A groupoid playing the
role of a scalar is manifestly orientable (in fact the structure of
scalar amounts to the ratio of two orientations). We show that if a point
$i\in I$ is not orientable then the scalar corresponding to any $i$-index
of any matrix $I {\leftarrow} A {\to} J$ will have cardinality zero, due
to a cancellation constructed from any odd automorphims of $i$.
Accordingly, such an unorientable component should not count as a basis element. 
In the end, the correct
cardinality assignment extending that of scalars is
\begin{eqnarray*}
  \linpm & \longrightarrow & \vect_\Q  \\
  \ZZ_{/I} & \longmapsto & \Q_{\pi_0 (I^\circ)}  .
\end{eqnarray*}

A certain amount of work is required to show that this is functorial.
The subtlety is to show that certain equivalences of groupoids (known facts from
objective linear algebra without signs) are in fact also equivalences of {\em scalars}, so
that signs can be transferred correctly along them. A key result in this direction is 
Corollary~\ref{cor:iMjNk} stating an equivalence of scalars
  $$
  {}_i(X\times_J Y)_k \simeq \int_{j\in J} ( {}_i X_j \times {}_j Y_k) .
  $$
This equivalence is the signed-objective analogue of the fundamental fact that
composition of linear maps is given by matrix multiplication.

The restriction to the orientable loci may look surprising at first.
But in fact this cardinality functor can be characterised by a
universal property: it is the unique $1$-semiadditive extension of the functor $\P \to
\vect_\Q$ that sends the point to $\Q$ and sends $\pm1$ to $\pm\Id$, invoking a
theorem of Harpaz~\cite{Harpaz:1703.09764}. (This characterisation of our cardinality
functor was suggested to us by Maxime Ramzi.)

A more pragmatic justification for this cardinality functor (and the reason we 
discovered it) is that it is what makes the notion of exterior power work.
We define the $k$-th exterior power of a finite set or groupoid $S$ to be the 
groupoid-with-parity-structure
$$
\Hak k S := (\textstyle\frac{S^k}{k!}{\,\to\,}\P) .
$$
Here the groupoid $\frac{S^k}{k!}$ is the weak quotient
$\Map(k,S)/\Sigma_k$ and importantly its parity structure is induced by
the sign representation $\Sigma_k\to O(1)$. The orientable points of
$\frac{S^k}{k!}$ are precisely the injective maps $k \to S$. It follows
that if $S$ has $n$ orientable components, then $\Hak k S$ has $n \choose k$
orientable components, as it
should. The expected properties of $\Hak k(\ZZ_{/S}) := \ZZ_{/\Hak k S}$
now follow from standard properties in $\linpm$. In particular we show 
that $\Hak k(\ZZ_{/S})$ has a universal property as recipient of 
alternating linear functors.

With the notion of exterior power sorted out, we can define determinants.
Given a matrix in the form of an endospan $I \leftarrow A \to I$ (which could be
in $\linpm$ or just in $\lin$) with $\#(\pi_0 I^\circ)= n$, the determinant of $A$, denoted 
$\Det(A)$, is
defined as the span $\Hak n I \leftarrow \Hak n A \to \Hak n I$.
This span may be huge and contain a lot of immaterial junk, but its orientable part 
is a $1$-by-$1$ matrix, i.e.~a scalar in the technical sense, so its cardinality is a
rational number. This works out as required:
$$
\norm{\Det (A)} = \det \norm{A} .
$$
We derive this result from a fancier result at the objective level: we 
establish a natural equivalence of groupoids-with-parity-structure (in 
fact an equivalence of scalars):
  $$
  {}_{\underline x} (\Hak n A)_{\underline x} \simeq 
  \sum_{\sigma\in\Sigma_n} \sign(\sigma) \prod_{i\in n} {}_{x_i} 
  A_{x_{\sigma i}}  ,
  $$  
  which is an objective version of the 
Leibniz expansion. 
The previous formula results from taking cardinality of this equivalence.

\bigskip

We can also define the cofactor (or adjugate) of a span, using 
next-to-top exterior powers $\Hak{n-1}A$, and establish
a natural equivalence of groupoids
$$
A \circ \Adj(A) \simeq \Det(A) \cdot \Id .
$$
The left-hand side is pullback-composition of spans (i.e.~matrix 
multiplication); the right-hand side is a suitable
notion of scalar times identity matrix. This is as close as we get to objective 
matrix inversion at this point. Since the  
whole theory of cofactor spans is a longer story, we have decided not to 
include it in the present paper.

\subsection{Related work}

The basic idea of signed finite sets is to consider pairs of finite
sets $(A,B)$, thought of as `$A$ minus $B$', with rules for how the positive and
the negative interact. It is a big challenge to make this work. For example, if
the distributive law is to be maintained, such a category will collapse
completely~\cite{Schanuel:negative}. It is possible to develop a certain
calculus for such signed sets, the most famous one being the involution
principle of Garsia and Milne~\cite{Garsia-Milne:1981}, who used it to prove
some deep combinatorial identities in number theory.
Zeilberger~\cite{Zeilberger:1985} and others used this calculus to give
bijective proofs of some fundamental matrix identities. 
Conway and Doyle~\cite{Doyle:1907.09015}, \cite{Doyle-Conway:math/0605779},
and others gave an interpretation of the involution principle in terms of
versions of the $1$-dimensional cobordism category. An important 
interpretation is the (older) result of Joyal, Street, and Verity~\cite{Joyal-Street-Verity}
that this category is the free compact closed category on the traced 
monoidal groupoid $\B$. The trace on $\B$ is given by {\em residual bijection},
the remarkable fact that any bijection of finite sets $A+X\simeq A+Y$ 
induces a canonical bijection $X\simeq Y$ 
(see~\cite{Joyal:foncteurs-analytiques}).

Another important contribution is the theory of virtual species of
Joyal~\cite{Joyal:foncteurs-analytiques}, which achieved an objective
account of power-series inversion. It features what is now called Joyal's
sign rule, where $\exp(-X)$ is interpreted as an alternating sum of
surjections. Loeb~\cite{Loeb:1991} studied negative sets in terms of
multisets with $\Z$-valued multiplicities, called hybrid sets, and used
it to give a combinatorial interpretation of binomial coefficients with
negative entries. The signs are introduced in a way that make the
formulae come out correctly, but it is probably fair to say that the
signs themselves are not really explained. The approach is a mix of
combinatorics and algebra, where the signs are simply imported from
algebra. While these theories have been important and still play a role
in combinatorics, it is difficult to develop them very far, because of
their lacking categorical properties. Neither virtual species nor hybrid
sets form a usable category. In particular, these approaches are not
suitable for developing objective algebraic combinatorics. The important
viewpoints and insights they represent should eventually be subsumed in a
more unified theory of negative sets.

Negative numbers can also be made arise from Euler characteristics
instead of insisting on cardinality, and one may try to use objects that
admit an Euler characteristics as a stand-in for negative sets. From the
viewpoint of combinatorics, a natural such context would be that of
finite CW-complexes or finite categories~\cite{Leinster:0610260}. It is
worth to call attention also to Schanuel's negative sets~\cite{Schanuel:negative}
in terms of Euler characteristics of polyhedral sets in euclidean spaces.
Here the open interval $(0,1)$ is given (Schanuel)--Euler characteristic
$-1$, as it is obtained from the closed interval $[0,1]$ (Euler
characteristic $1$) by removing two points. This notion
satisfies all the expected axioms of Euler characteristics, except
homotopy invariance. It is certainly an intriguing notion, which deserves
further study, but it does not quite
fit the needs of objective linear algebra.

\bigskip

\noindent {\em Kapranov's work.} The relationship between signs and the
sphere spectrum has been analysed deeply by Kapranov. In his remarkable
paper~\cite{Kapranov:1512.07042}, he first explains (following an insight
of Joyal) how super\-geometry in mathematics and the Koszul sign rule
originate with how $\Z_2$ sits inside the sphere spectrum as
$\pi_{1}(\S)$, and then carries on to explain how in physics the notion
of supergeometry (rather about spinors and square roots) relates instead
to $\Z_2 = \pi_2(\S)$. The two instances of the group $\Z_2$ containing
the signs thus relate differently to $\S$. Next he analyses the interplay
between these two homotopy levels, and finally argues that a deeper
understanding of signs should involve the full sphere spectrum.
Kapranov's work is a main source of inspiration, but his paper is not
concerned with the question we address in this work. While Kapranov's
work analyses the behaviour of signs, taking for granted the notion of
vector space (and negative scalars), the present work operates at a more
fundamental level, aiming actually
to construct the signs combinatorially.%
\footnote{Similarly, Ganter and Kapranov~\cite{Ganter-Kapranov:1110.4753}
studied `exterior powers of categories', but their categories are 
assumed from the outset to be
linear categories (meaning enriched over vector spaces), which means that
signs and negative quantities are already available to begin with.}

\subsection{Outlook}

In this paper we wish to concentrate on the notion of signs, and we
develop the theory with that specific focus. However, most of our
constructions in Sections 2--3 admit a generalisation to the case where
$\P$ is replaced by $BA$, the classifying space of an abelian group $A$.
We are not completely sure about the significance of the full generality,
but the special case of cyclic groups and the limit group $\Q/\Z$ seems
to be very promising for the prospective of getting to some objective
version of the complex numbers. Instead of parities we then talk about
{\em phases}. In a sequel paper we will pursue that direction.

Another direction for further work concerns replacing $\P$ with the full
homotopy type of the zero-component of the sphere spectrum, rather than
only its $1$-truncation. For the aspects of the theory developed so far,
it does not make much of a difference, as in any case the cardinality
functor will collapse higher homotopies, but in the light of Kapranov's work, 
it is certainly a line of inquiry worth further effort.

We wish to emphasise that
the perceived success with modelling exterior powers and determinants
should not be taken as universal support for the viability of the model
for signed sets introduced in this paper. After all, the model was essentially
designed to make this work. It is not clear how the notion will fare in
modelling other sign phenomena in algebraic combinatorics. In particular,
signs play a key role in theory of M\"obius
inversion, and in the objective account~\cite{Galvez-Kock-Tonks:1512.07573},
\cite{Galvez-Kock-Tonks:1512.07577}, \cite{Galvez-Kock-Tonks:1512.07580},
the signs are handled in a rather ad hoc way, by `moving negatives to
the other side of the equation'. At the moment we have not been able to take any 
advantage of the new model in that context.

A pressing project is the general study of cancellations. The realisation
that the algebraic notion of zero is not the same thing as nothing, but
that there can be huge chunks of matter with net cardinality zero, and
that there are structures exhibiting the cancellations, is an idea that
resonates well with modern homotopy theory, which is increasingly seen as the
science of different ways things can be equal. Learning how to deal with
such cancellations in a controlled way, especially in connection with the
infinite, could potentially have big implications for topics such as
divergent series and perturbative renormalisation.

\subsection{Organisation of the paper}

The paper has three sections after the introduction. Section 2 develops linear algebra
over $\ZZ$ in a purely objective fashion, without mention of cardinality. It is a
feature that not many signs are seen at this level -- they are sort of implicit and
take care of themselves -- but quite a lot of work is required to show that certain
equivalences of groupoids are also equivalences of scalars. In the very short Section
3 we introduce the cardinality functor. Since we have done most of the work at the
objective level, there is not so much to do here, but it is still an important aspect,
especially in view of the surprising nature of the cardinality functor. Finally in
Section 4 we work out what exterior powers and determinants are in our framework. Most
of the work here consists of verifications, to justify that the rather surprising
definitions are reasonable.  We have sign-posted four turns along the way.

\bigskip

  \noindent {\bf Acknowledgments.} We wish to thank Gon\c calo Marques,
  Andr\'e Joyal, Maxime Ramzi, Rune Haugseng, Louis Martini, Imma
  G\'alvez, Andy Tonks, Jan Steinebrunner, Anders Kock, Thomas Jan
  Mikhail, John Baez, and Sebastian Wolf for their feedback
  and help.


\section{Linear algebra}

\subsection{Objective linear algebra without signs}

\label{sub:lin}

The basic theory of objective linear algebra is now standard. A systematic treatment
is given in \cite{Galvez-Kock-Tonks:1602.05082}. We quickly run through the main
points to fix notation and set the stage.

\begin{blanko}{Linear algebra with coefficients in finite sets.}
  Let $\F$ denote the category of finite sets (and all maps).
  Objective linear algebra over $\F$ works by regarding slice categories 
  $\F_{/S}$ as a kind of
  vector spaces.
  The reason $\F_{/S}$ behaves like a vector space is that it is 
  the finite-sum completion of the discrete category $S$, just like the 
  vector space
  $\Q_S$ spanned by $S$ is the `linear-combination completion' of $S$.
  An object $f: X{\to}S$ in $\F_{/S}$ thus plays the role of a vector.
  It can be written as a sum of its fibres. Clearly $X \simeq \sum_{s\in 
  S} X_s$, but more precisely we have
  $$
  f:X{\to}S  \qquad \simeq \qquad  \sum_{s\in S} X_s \cdot \name{s} \,,
  $$
  where $\name{s}: 1 \to S$ is the map that picks out element $s$. These 
  maps $\name{s}$ thus serve as a basis.
  
  The connection to ordinary linear
  algebra (over $\Q$) is via cardinality: the cardinality of the slice
  category  $\F_{/S}$ is the vector space $\Q_S$ spanned by $S$, and the
  cardinality of an object $X \to S$ in the slice is the vector in $\Q_S$
  given as the linear combination
  \begin{equation}\label{eq:lincomb}
  \sum_{s\in S} \norm{X_s} \cdot \basis_s.
  \end{equation}
  (Here $\basis_s = \norm{\name{s}}$ is the basis vector corresponding to the 
  element $s\in S$.)
  
  A functor between slices is called {\em linear} if it preserves finite 
  sums. From the universal property of $\F_{/S}$ as the sum 
  completion, it is not difficult to show that every linear functor 
  $\F_{/S} \to \F_{/T}$ is given (essentially uniquely) by a span 
  $$S\stackrel p\leftarrow M \stackrel q \to T.$$
  The corresponding functor is  
  $$
  \F_{/S} \stackrel{q\lowershriek \circ p \upperstar}\longrightarrow 
  \F_{/T},
  $$
  pullback along $p$ followed by composition along $q$.
  The result is the objective 
  version of the basic fact in linear algebra that for fixed bases, 
  every linear map is given uniquely by  a matrix.

  Composition of spans is given by pullback (and Beck--Chevalley shows
  that this corresponds to composition of functors).
  The span can be interpreted as a matrix of finite sets, namely 
  the two-sided fibres ${}_sM_t$, and the classical statement that composition of
  linear maps is matrix multiplication has the following objective analogue,
  again a question of splitting
  into fibres:
  $$
  {}_i(X\times_J Y)_k \simeq \sum_{j\in J} ( {}_i X_j \times {}_j Y_k)
  $$

  Taking cardinality produces the classical notions, as follows readily from the fibre
  formulae: the cardinality of a linear functor is the linear map given by the matrix
  of cardinalities of the two-sided fibres of the span.
  
  The tensor product of two slices is given by
  $$
  \F_{/S} \otimes \F_{/T} = \F_{/(S\times T)}
  $$
  in tight analogy with $\Q_S \otimes \Q_T = \Q_{S\times T}$.
  Altogether there is a symmetric monoidal category $(\lin, \otimes, 
  \F)$, whose objects are slices of $\F$ and whose morphisms are
  linear functors (given by spans), and the cardinality assignments 
  assemble into a functor
  $$
  \norm{ \ \cdot \ } : \lin \to \vect
  $$
  which is symmetric monoidal.
\end{blanko}

\begin{blanko}{Linear algebra over finite groupoids.}
  Over the $2$-category $\FF$ of finite groupoids, the situation is very 
  similar, provided all notions are taken in their homotopy version.
  We thus consider homotopy slices, and exploit that the homotopy slice  $\FF_{/S}$
  is the finite-homotopy-sum completion of the groupoid $S$. {\em Homotopy sum}
  means (homotopy) colimit over a groupoid, just as ordinary sum means 
  colimit over a set. A model for the homotopy colimit of a functor $F: S \to \FF$
  is the Grothendieck construction $\int_S F$, which is 
  equivalent to $\sum_{s\in \pi_0 S} F(s)/s!$, 
  where the quotient is the homotopy quotient for the natural action of
  $$
  s! := \Aut(s)
  $$
  on $F(s)$. 
  \footnote{The factorial notation $s! := \Aut(s)$ is both natural and practical. The symmetry
  factors $/s!$ appear in the same places as they do in combinatorics, and in each
  case they can be interpreted as a homotopy quotient of a group action, and usually 
  appearing together with sums $\sum_{s\in \pi_0 S}$ so as to form together colimit 
  data rather than coefficient data.
  This is analogous to how coefficients of exponential generating functions are 
  interpreted combinatorially: in $\sum_i a_i \frac{z^i}{i!}$, the coefficient is 
  $a_i$, not $a_i/i!$; the $\sum_i$ and the $/i!$ arise together as 
  homotopy sum over the groupoid $\mathbb{B}$ of finite sets and bijections. This is the
  lesson from species theory~\cite{Joyal:1981}, 
  \cite{Joyal:foncteurs-analytiques} (although the interpretation as a homotopy 
  sum is only possible after passing to groupoid-valued 
  species~\cite{Kock:MFPS28}, \cite{Gepner-Haugseng-Kock:1712.06469}).}

  Starting with a general map $X{\to} S$, we get a
  functor $F: S\to \FF$ sending $s$ to the homotopy fibre $X_s$, and we 
  obtain the canonical decomposition of $X{\to}S$ as a homotopy sum of
  its fibres
  \begin{equation}\label{int-of-fib}
  X{\to}S  \qquad \simeq \qquad  \int_{s\in S} X_s \cdot \name{s} 
  \ \simeq \ \sum_{s\in \pi_0 S} \frac{X_s \cdot \name{s}}{s!}
  \ \simeq \ \sum_{s\in \pi_0 S} X_s/s! \cdot \name{s}
  \end{equation}
  with reference to the natural action of the group $s!$
  on the map $X_s \cdot \name{s}$
  or on the homotopy fibre $X_s$.  Again we see that the map 
  $\name{s}:1 \to S$ serve as a basis.
  
  The connection to ordinary linear algebra over $\Q$ is now via
  homotopy cardinality. Recall (for example from \cite{Baez-Dolan:finset-feynman}) 
  that the (homotopy) cardinality of a groupoid is defined 
  as
  $$
  \norm{S} := \sum_{s\in \pi_0 S} \frac{1}{\norm{s!}} .
  $$
  The cardinality of $f{:}X{\to}S$ is now
  $$
  \norm{f} = \sum_{s\in\pi_0 S} 
  \frac{\norm{X_s}}{\norm{s!}} \basis_s  \quad \in \Q_{\pi_0(S)} .
  $$
  
  A functor between slices $\FF_{/S} \to \FF_{/T}$ is {\em linear} if
  it preserves finite homotopy sums, and again one can show that these
  are precisely given by spans $S \stackrel p \leftarrow M \stackrel q 
  \to T$ by the formula $q\lowershriek \circ p \upperstar$.
  The formula for matrix multiplication now involves a homotopy sum instead of a 
  discrete sum:
    $$
  {}_i(X\times_J Y)_k \simeq \int_{j\in J} ( {}_i X_j \times {}_j Y_k) .
  $$

  Again the cardinality assignment extends to linear functors, but there
  is a very important twist: the entries of the matrix representing the
  cardinality of the linear functor are not just $\norm{{}_s M _t}$
  but rather 
  \marginpar{\dbend} 
  $$
  \norm{{}_sM_t}/\norm{t!}.
  $$   
  The necessity of these symmetry factors $/t!$
  is a fact of life in homotopy linear algebra
  (see~\cite{Galvez-Kock-Tonks:1602.05082}). It comes about because the
  entries of the matrix can be characterised as the coefficients of the
  linear combination that expresses the image of a basis vector of the
  domain in terms of the basis on the codomain. But in the homotopy case,
  this has to be a homotopy sum, which involves these denominators, and
  they appear in the codomain and not in the domain for this reason.
  
  The tensor product $\FF_{/S}\otimes \FF_{/T} = \FF_{/S\times T}$ and 
  the symmetric monoidal cardinality functor 
  $$
  \norm{ \ \cdot \ }:
  (\lin, \otimes,\FF) \to (\vect, \otimes,\Q)
  $$
  work the same as in the discrete case. Note that we update the symbol
  $\lin$ to refer now to groupoid slices.
\end{blanko}

\begin{blanko}{Example.}
  To illustrates clearly why the symmetry factors $/t!$ are needed in the
  matrix entries, consider the identity span $T\stackrel=\leftarrow T
  \stackrel=\to T$. Then the diagonal entries of the matrix will come
  from the two-sided fibres ${}_t T_t$, but these fibres are $\Omega B
  \Aut(t)$ (the loop space of $T$ at $t$). The cardinality of this
  two-sided fibre is $\norm{t!}$, the number of automorphisms of
  $t$. This alone would not be the identity matrix. But the group
  $t!$ acts on this loop space, and the homotopy quotient is
  contractible, since the action is both free and transitive, hence
  giving the desired diagonal entry $1$. So with the symmetry factor $/t!$ included
  we do get the the identity matrix.
\end{blanko}

In the rest of the paper we aim at a similar story in a context 
where the cardinality can have signs. The way we set it up, the main steps from the 
set case and the groupoid case carry over smoothly for general reasons, but it will
require some work to actually see the signs, and see that they come out correctly 
under the cardinality functor we define.

\subsection{Parities and the `ground field'}

Denote by $O(1)$ the multiplicative group $\Z^\times = \{+1,-1\}$.
The corresponding one-object groupoid is denoted
$$
\P := BO(1) .
$$ 
(The letter P is chosen for `parities', but the symbol may also remind us that $\P$ has 
the homotopy type of infinite real projective space.)

A {\em parity structure} on a groupoid $S$ is by definition 
a morphism of 
groupoids
$$
f: S \to \P.
$$
It is thus an assignment of values $\pm 1$ to every arrow in $S$,
in a way compatible with composition of arrows.
These are {\em not} the signs that will correspond to positive/negative numbers,
and for this reason we prefer to call these values assigned to arrows {\em 
parities}, so that we speak of an {\em even} or an {\em odd} arrow.

Let $\grpd$ denote the $2$-category of finite groupoids, functors, and 
invertible natural transformations.
Finite groupoids with parity structure form a $2$-category
$$
\ZZ:=\grpd_{/\P},
$$
the weak slice. A morphism from $f: S \to \P$ to $g: T \to \P$ is thus a pair
$(u,\theta)$ where $u:S \to T$ is a morphism of groupoids (that is, a functor),
and $\theta$ is a homotopy (that is, a natural transformation)
$$
\begin{tikzcd}[column sep={2em,between origins}]
  S  \ar[rr, "u"] \ar[rd, "f"'] &
  \ar[d, phantom, pos=0.3, "\theta" description]
  & T \ar[ld, "g"] \\
  & \P . &
\end{tikzcd}
$$
A $2$-cell from $(u,\theta)$ to $(u',\theta')$ is a $2$-cell
$$
\begin{tikzcd}[column sep={3.8em}]
  S 
  \ar[r, bend left, "u"] 
  \ar[r, bend right, "u'"'] 
  \ar[r, phantom, "\simeq"] &
  T
\end{tikzcd}
$$
mediating between $\theta$ and $\theta'$.

\begin{blanko}{Example.}
  The terminal groupoid $1$ admits only one parity structure, which we denote by
  $$
  \e: 1 \to \P.
  $$
  Note however that this structure is isomorphic 
  with itself in two ways: there are two possible natural transformations $\epsilon$
  $$
  \begin{tikzcd}[column sep={2em,between origins}]
	1  \ar[rr, "="] \ar[rd, "\e"'] &
	\ar[d, phantom, pos=0.3, "\epsilon" description]
	& 1 \ar[ld, "\e"] \\
	& \P . &
  \end{tikzcd}
  $$
  depending on whether the unique component of $\epsilon$ is even or odd. In
  particular, the object $\e$ is not terminal in $\ZZ$. (The terminal object in
  $\ZZ$ is of course $\id: \P\to\P$, which will not play any important 
  role in the following.)
\end{blanko}

\begin{blanko}{Remark.}
  Note that a morphism of parity structures $(u,\theta)$ must send even
  automorphisms to even automorphisms. For arrows between distinct objects, it
  can happen that an even arrow is sent to an odd arrow, just as it is possible
  that odd arrows (also odd automorphisms) can be sent to even arrows.
\end{blanko}

\begin{blanko}{Base `topos'.}
  We are interested in developing objective linear algebra over $\ZZ:=\grpd_{/\P}$. 
  This is possible since $\ZZ$ is a pretopos in the sense of 
  $\infty$-categories, so as to have meaningful slices, fibres, and so on.
  In fact it is $2$-truncated, and we will work with it as 
  a $2$-category with only invertible $2$-cells.
  
  Doing linear algebra means that the basic objects are (weak) slices,
  such as
  $$
  (\grpd_{/\P})_{/f} =: \ZZ_{/f} =: \ZZ_{/S}
  $$
  for some $f: S \to \P$. These slices play the role of vector spaces.
  While setting up the theory it can be convenient to explicitly reference 
  $f$, but in the long run it seems more useful to suppress mention of $f$
  and write $\ZZ_{/S}$ instead of 
  $\ZZ_{/f}$, since all groupoids will be assumed equipped with 
  a parity structure. We will then simply write $\pari(a)$  generically
  for the parity of an arrow $a$ in $S$ or in any parity groupoid.
\end{blanko}

\subsection{Group actions and homotopy sums}

\label{sub:group-act}
Since homotopy sums (colimits over a groupoid $I$)
split into discrete sums of weak quotients of group actions:
$$
\int_{i\in I} X_i \simeq \sum_{i\in\pi_0 I} X_i / \Aut(i) ,
$$
and since this splitting is useful for the compatibility with cardinality,
group actions will play an important role throughout.
We briefly go through the definitions in $\ZZ$ and some key properties.

We follow the convention that right actions are {\em strict} functors $BG \to \CC$ (to some 
category), written $x\mapsto x.g$, whereas left actions are (strict) functors $BG\op\to\CC$,
written $x \mapsto g^{-1}.x$.

\begin{blanko}{Actions on parity structures.}
  A $G$-action on a parity structure $f:X \to \P$ is a (strict) functor $BG \to \ZZ$
  with value $f$. The data of an action is thus given by
  $$
  \begin{tikzcd}[column sep={2em,between origins}]
    X  \ar[rr, ".g"] \ar[rd] &
    \ar[d, phantom, pos=0.3, "\theta_g" description]
    & X \ar[ld] \\
    & \P  &
  \end{tikzcd}
  $$
  for each $g$, subject to the compatibility
  $$
  \begin{tikzcd}[column sep={3.5em,between origins}]
    X  \ar[r, ".g"] \ar[rd, bend right] & X \ar[d] \ar[r, ".h"] & X \ar[ld, bend left]
    \\
    \ar[ru, phantom, pos=0.7, "\theta_g" description]
    & \P  &
      \ar[lu, phantom, pos=0.7, "\theta_h" description]
  \end{tikzcd}
  =
  \begin{tikzcd}[column sep={3em,between origins}]
    X  \ar[rr, ".(gh)"] \ar[rd, "f"'] &
    \ar[d, phantom, pos=0.3, "\theta_{gh}" description]
    & X \ar[ld] \\
    & \P  &
  \end{tikzcd}
  $$
  In the remainder of the paper,
  this condition will be referred to as `compatibility with composition' without 
  spelling it out in detail again.
  Note that the $2$-cells $\theta_g$ constitute extra data that is part of the 
  structure,
  and that it is a natural transformation, with a component $(\theta_g)_x$ for each 
  point $x\in X$.
  In many situations this data will be simply $\theta_g = \pari(g)$, in cases where
  we already have a parity structure $BG\to\P$; note that $\theta_g$ is then independent 
  of $x$.
\end{blanko}

\begin{blanko}{Weak quotients.}
  If a finite group $G$ acts on a parity groupoid $X{\to}\P$, the colimit in $\ZZ$
  can be calculated by first taking the weak quotient groupoid $X/G$, and then 
  describe it parity structure.
  So its objects are the objects of $X$, and the arrows are formal composites $
  x\to y \to y.g$.
  We refer to the arrows of the form $x\to y$ as {\em old} arrows, while the
  arrows of the form $y\to y.g$ are called {\em new} arrows. In the parity structure on
  $X/G$, all the old arrows retain their original parity, and the new arrows
  $y\to y.g$ get parity $(\theta_g)_y$.
\end{blanko}

\begin{blanko}{Example.}\label{ex:BG-as-colimit}
  Suppose we have a parity structure $BG\to \P$, and let $G$ act on the object 
  $\e:1{\to}\P$ by
  $$
  \begin{tikzcd}[column sep={3.2em,between origins}, row sep={3.6em,between origins}]
    1  \ar[rr, "\id"] \ar[rd] &
    \ar[d, phantom, pos=0.25, "{\scriptsize \pari(g)}"]
    & 1 \ar[ld] \\
    & \P  &
  \end{tikzcd}
  $$
  Then the weak quotient $\e/G$ is $BG$ with its original parity structure.
  This is a very pretenseless example, but it is the key ingredient in the proof of 
  the next two well-known results (see for example 
  \cite{Galvez-Kock-Tonks:1602.05082}).
\end{blanko}

\begin{lemma}\label{lem:colim-of-e}
  Every $X{\to}\P$ is (uniquely) the homotopy sum of copies of $\e:1{\to}\P$.
\end{lemma}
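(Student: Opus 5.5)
The plan is to reduce to connected components and then apply Example~\ref{ex:BG-as-colimit} to each component. Since homotopy sums over a groupoid split as discrete sums of weak quotients, $\int_{x\in X} F(x) \simeq \sum_{x\in\pi_0 X} F(x)/x!$, and since coproducts in $\ZZ=\grpd_{/\P}$ are computed as disjoint unions of groupoids with the induced parity maps, it suffices to treat a connected $X$. For connected $X$ we choose a point $x$. The inclusion $B(x!)\to X$ of the full subgroupoid on $x$ is an equivalence, and the composite $B(x!)\to X\to\P$ is a parity structure on $B(x!)$; it is the group homomorphism $\pari\colon x!\to O(1)$. The homotopy commuting with $\P$ is the identity, so this is an equivalence in $\ZZ$.

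The main step is to identify this with the homotopy sum of the constant diagram. Take the functor $F\colon X\to\ZZ$ sending every point to $\e\colon 1\to\P$ and every arrow $a$ to $(\id_1,\pari(a))$. This is the natural diagram, because it is just the composite $X\to\P\to\ZZ$, where $\P\to\ZZ$ is the action of $O(1)$ on $\e$ by the $2$-cells $\pm1$. Compatibility with composition holds because $\pari$ is a functor. Restricted to $B(x!)$, this is exactly the action in Example~\ref{ex:BG-as-colimit}, so $\e/x!\simeq B(x!)$ with its original parity structure, hence $\simeq X$ over $\P$. Reassembling the components gives
$$X\simeq \int_{x\in X}\e\cdot\name{x},$$
which is the decomposition (\ref{int-of-fib}) with all fibres equal to $\e$, now taken over $\P$.

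I will state the uniqueness claim as follows: if $X\simeq\int_{i\in I}\e$ for some diagram $I\to\ZZ$ that is constant on $\e$, then $I\simeq X$ over $\P$. The argument is that such a diagram amounts to a functor $I\to\P$, because the automorphisms of $\e$ in $\ZZ$ are exactly $O(1)$, and the colimit computed above is then $I$ itself with that parity structure.

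The main obstacle I expect is bookkeeping rather than ideas. One must check that the weak-quotient description (old arrows keep their parity, new arrows $y\to y.g$ get parity $(\theta_g)_y$) really gives back the original parity of each automorphism. One must also make sure the equivalences are equivalences in the weak slice, with the $2$-cells tracked, and not merely of underlying groupoids. For $\e/x!$ there are no old arrows besides the identity, and the new arrows are labelled by $g$ with parity $\pari(g)$, so this check works out directly.
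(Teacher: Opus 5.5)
Your proposal is correct and takes essentially the route the paper intends. The paper gives no written proof; it names Example~\ref{ex:BG-as-colimit} as the key ingredient, and in the proof of Proposition~\ref{prop:universal} it uses exactly your decomposition of $X$ into a discrete sum over $\pi_0 X$ of parity groupoids $B\Aut(x)\to\P$, each identified with $\e/x!$. Your uniqueness argument is a sensible way to make precise the parenthetical `(uniquely)', which the paper leaves unexplained: $\Map_{\ZZ}(\e,\e)$ is the discrete set $O(1)$, so an $\e$-valued diagram is just a functor $I\to\P$, and its colimit is $I$ itself.
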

\begin{prop}
  $\ZZ$ is the finite-homotopy-sum completion of $\P$.
\end{prop}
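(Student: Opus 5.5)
The plan is to reduce the statement to two facts: the standard description of the homotopy-sum completion of a groupoid as presheaves of (finite) groupoids on it, and the equivalence $\grpd_{/B} \simeq \Fun(B, \grpd)$ given by taking homotopy fibres (the Grothendieck construction). Applied to $B=\P$, this equivalence identifies $\ZZ = \grpd_{/\P}$ with the category of finite groupoids with an $O(1)$-action. Since $\P\op\simeq\P$, there is no distinction between presheaves and copresheaves here. Under this identification, the object $\e:1\to\P$ corresponds to the representable functor, namely $O(1)$ acting on itself by translation.

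The key steps, in order, are as follows.

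\emph{Step 1.} Define the Yoneda-type functor $y:\P\to\ZZ$, sending the unique object to $\e$ and the nontrivial automorphism $-1$ to the odd self-equivalence $(\id,\epsilon)$ of $\e$ from the earlier Example. Check that $\Map_\ZZ(\e,\e)$ is the discrete groupoid $O(1)$, so that $y$ is fully faithful: a morphism $\e\to\e$ is a functor $1\to 1$ together with a $2$-cell, i.e.\ an element of $O(1)$, and there are no nontrivial $2$-cells between these.

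\emph{Step 2.} Show that $\ZZ$ has all finite homotopy sums, computed as in the paragraph on weak quotients: a finite homotopy sum is a disjoint union of weak quotients $X/G$, and these carry the induced parity structure.

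\emph{Step 3.} Invoke Lemma~\ref{lem:colim-of-e}: every object $f:X\to\P$ is the homotopy sum $\int_{x\in X}\e$. This is the content of Example~\ref{ex:BG-as-colimit} applied componentwise, since each component of $X$ is equivalent to $BG$ with parity $BG\to\P$, which is $\e/G$. Hence $y$ is dense under finite homotopy sums.

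\emph{Step 4.} Show that $\e$ is \emph{tiny}, i.e.\ $\Map_\ZZ(\e,-)$ preserves finite homotopy sums. Concretely, $\Map_\ZZ(\e,X)$ is the homotopy fibre of $X\to\P$ over the base point, and homotopy fibres commute with homotopy colimits over groupoids, because homotopy colimits are universal (stable under pullback) in $\grpd$.

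Given these steps, the standard argument finishes the proof. For a category $\DD$ with finite homotopy sums, the functor $\Fun^{\text{sum}}(\ZZ,\DD)\to\Fun(\P,\DD)$ given by restriction along $y$ is an equivalence. Its inverse is left Kan extension, $F\mapsto \big(X\mapsto \int_{x\in X} F(f(x))\big)$. The Kan extension restricts back to $F$ because $y$ is fully faithful. It preserves finite homotopy sums by Fubini for iterated Grothendieck constructions. Any sum-preserving functor agrees with the Kan extension of its restriction, by Step 3.

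I expect the main obstacle to be the $2$-categorical bookkeeping in Steps 3 and 4: keeping track of the homotopies $\theta$ so that the identification $X\simeq\int_{x\in X}\e$ is natural in $X$. To handle this, I would work throughout via the equivalence $\ZZ\simeq\Fun(\P,\grpd)$, where Step 4 becomes evaluation at the base point, which clearly preserves colimits. In that setting the statement is exactly the classical fact that presheaves of finite groupoids on a finite groupoid form its free finite-colimit (homotopy-sum) completion.
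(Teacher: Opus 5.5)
Your proposal is correct and takes essentially the paper's approach. The paper gives no separate argument: it treats the statement as a well-known instance (cf.\ G\'alvez--Kock--Tonks) of the fact that $\FF_{/S}$ is the finite-homotopy-sum completion of $S$, here with $S=\P$. It singles out Example~\ref{ex:BG-as-colimit} ($BG\to\P$ is the weak quotient $\e/G$), and hence Lemma~\ref{lem:colim-of-e} (your Step~3), as the key ingredient. Your Steps~1, 2 and~4, together with the Kan-extension argument via $\ZZ\simeq\Fun(\P,\FF)$, are just the standard proof of that fact written out for $S=\P$.
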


\subsection{Orientations}

\label{sec:orientations}

The following notion of orientation is not logically necessary for the results in 
this paper, but conceptually it is very helpful.

An {\em orientation} on a parity structure $(S,f)$ is by definition a
morphism $(u,\omega)$
$$
\begin{tikzcd}[column sep={2em,between origins}]
  S  \ar[rr, "u"] \ar[rd, "f"'] &
  \ar[d, phantom, pos=0.3, "\omega" description]
  & 1 \ar[ld, "\e"] \\
  & \P  &
\end{tikzcd}
$$
to the trivial parity structure $\e:1 \to \P$, the universal (double)
cover of $\P$. The category of oriented parity structures is thus the (weak) slice
$\ZZ_{/\e}$.

\begin{lemma}\label{lemma:2^k orientations}
  A parity structure admits an orientation if and only if it has no
  odd automorphisms.  
  In that case there are $2^k$ possible orientations
  (for $k$ the number of connected components).
\end{lemma}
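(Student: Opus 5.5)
Since $1$ is terminal in $\grpd$, the functor $u:S\to 1$ is forced. An orientation is therefore nothing but a natural transformation $\omega: f \Rightarrow \e\circ !$, where $\e\circ !:S\to\P$ is the constant functor sending every arrow to the identity $+1$. So the plan is to unwind what such a natural transformation amounts to. A component $\omega_s$ is an element of $O(1)=\{\pm1\}$ for each object $s\in S$. Naturality with respect to an arrow $a:s\to t$ reads (in the abelian group $O(1)$)
$$
\omega_t \cdot f(a) = \omega_s ,
\qquad\text{i.e.}\qquad f(a) = \omega_s\,\omega_t .
$$
Thus an orientation is a labelling of objects by signs such that the parity of every arrow is the ratio of the labels at its endpoints.

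Necessity is immediate from this description: for an automorphism $a:s\to s$ the condition gives $f(a)=\omega_s^2=+1$. Hence if some automorphism is odd, no orientation exists.

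For sufficiency and the count, I will work one connected component at a time, since a natural transformation on $S=\sum S_c$ is the same as a family of natural transformations on the components. Fix a component, choose a base object $s_0$, and choose a sign $\omega_{s_0}\in\{\pm1\}$. For each object $t$ of the component, pick an arrow $a:s_0\to t$ and define $\omega_t:=\omega_{s_0} f(a)$. This is independent of the choice of $a$: two choices $a,a'$ differ by the automorphism $a'^{-1}a$ of $s_0$, which is even by hypothesis, so $f(a)=f(a')$. Naturality for an arbitrary arrow $b:t\to t'$ then follows because $ba$ is an allowed choice of arrow $s_0\to t'$, and functoriality of $f$ gives $f(ba)=f(b)f(a)$. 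Conversely, any orientation on the component is determined by its value at $s_0$, by the naturality formula. So each component has exactly two orientations, and $S$ has $2^k$ in total.

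The only mildly delicate point is the well-definedness step, which is exactly where the hypothesis of no odd automorphisms enters. I would also add a remark that "possible orientations" is counted as the set of $\omega$ up to equality, since $u$ is fixed and the slice has no further identifications of $\omega$'s coming from $1$. Equivalently, the $2$-cells between parallel maps into $\e$ are trivial, because $1$ has only the identity functor to itself.
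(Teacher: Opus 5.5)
Your proposal is correct and follows essentially the same route as the paper. Naturality at an odd automorphism rules out any orientation, and otherwise one sign chosen at a basepoint in each component forces all other values, giving $2^k$ orientations; you spell out the well-definedness and naturality checks, and why $2$-cells between orientations are trivial, which the paper leaves implicit.
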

\begin{proof}
  If there is an odd automorphism $g: s \to s$, then it is not 
  possible to achieve naturality with respect to $g$: the naturality 
  square should be
  \[
  \begin{tikzcd}
  \cdot  \ar[r, "\omega_s"] \ar[d, "f(g)"'] & \cdot \ar[d, 
  "\e u(g)"]  \\
  \cdot \ar[r, "\omega_s"'] & \cdot
  \end{tikzcd}
  \]
  so there is no choice of $\omega_s$ to make it commute. 
  Conversely, if
  there are no odd automorphisms, then it follows that all arrows from $s$ to 
  $s'$ have the same parity. One can now easily exhibit an orientation by
  making a
  choice for $\omega_s$ for one point $s$ in each connected component. The 
  value in every other point in the same connected component is now forced.
  If there are $k$ 
  connected components, we thus get $2^k$ possibilities, as stated.
\end{proof}

\begin{blanko}{Example.}
  Note that the terminal object $\id : \P \to\P$ is {\em not} orientable.
\end{blanko}

\subsection{Linear maps}

\begin{blanko}{Linear functors.}
  The {\em linear functors} from $\ZZ_{/f}$ to $\ZZ_{/g}$ are given by spans
  over $\P$, such as
\begin{equation}\label{eq:BZ2span}
\begin{tikzcd}[row sep={2em,between origins}, column sep={4.8em,between origins}]
  & M \ar[dd, "m" description] \ar[ld, "p"'] \ar[rd, "q"] &  \\
  S \ar[rd, "f"'] &
  \ar[l, phantom, pos=0.3, "\stackrel{\epsilon'}\Leftarrow" description]
  \ar[r, phantom, pos=0.3, "\stackrel\epsilon\Rightarrow" description]
  & T \ar[ld, "g"] \\
  & \P &
\end{tikzcd}
\end{equation}
We write $\widehat M$ to refer to the whole span, including the $2$-cells.

The actual functor is
$$
\ZZ_{/f} \stackrel{q\lowershriek \circ p\upperstar}\longrightarrow \ZZ_{/g} .
$$
\end{blanko}

\begin{blanko}{Remark.}
  When the base category is $\FF$, one has for each $S$ the canonical equivalence
  $\FF_{/S}\simeq \FF^S$, which is the key ingredient in the proof of the (well-known)
  fact that the colimit-preserving functors between slices are precisely the functors
  given by
  spans (see for example \cite[2.10]{Galvez-Kock-Tonks:1602.05082}).
  
  Over $\ZZ$ the situation is more complicated, and the naive analogue of that
  statement is not true. Consider for example $s: BG\to\P$
  corresponding to a surjective group homomorphism and $t:BG\to\P$ corresponding to
  the trivial group homomorphism. Then as abstract categories both $\ZZ_{/s}$ and
  $\ZZ_{/t}$ are equivalent to $\FF_{/BG}$. Under these equivalences, the identity
  functor defines a functor $\ZZ_{/s} \to \ZZ_{/t}$, which takes an object $X{\to}BG$
  to the same object but with trivial parity structure. This functor preserves all
  colimits because it is an equivalence, but it cannot be realised by a $\P$-span,
  since the codomain is orientable and the domain is not.
  
  One way to fix this problem is to work internally to $\ZZ$
  (see 
  \cite{Martini-Wolf:2111.14495}). For this, one must
  first interpret all the slices $\ZZ_{/f}$ as categories internal to
  $\ZZ$; under this interpretation one has $\ZZ_{/f}\simeq \ZZ^f$
  \cite[Remark~3.7.2]{Martini:2103.17141},
  and then the usual proof
  that colimit-preserving functors are given by spans internalises,
  restoring the interpretation of ($\P$)-spans as precisely the (internal) 
  colimit-preserving functors.
\end{blanko}

\begin{blanko}{Remark.}
  Since all notions (slices, pullbacks, quotients, etc.) are the homotopy notions, one
  can work with the groupoids as one would with $\infty$-groupoids, and might not want
  to pay much attention to the $2$-cells -- in the $\infty$-world a commutative diagram
  is always only up to homotopy anyway. In the literature on $\infty$-categories, it
  is common not even to name these homotopies, but of course this happens {\em only}
  in situations where `nothing could go wrong'. In the present work we have to be
  extra careful with the $2$-cells, because it is where the signs are encoded (and
  experience shows that calculations with signs can actually go wrong). The following
  painstaking manipulations with $2$-cells should thus not be taken as pedantry.
\end{blanko}

\begin{blanko}{Morphisms of $\P$-spans.}\label{morphism-of-spans}
  Given two $\P$-spans
  \[
  \begin{tikzcd}[row sep={2em,between origins}, column sep={4.8em,between origins}]
   & M \ar[dd] \ar[ld] \ar[rd] &  \\
  S \ar[rd, "f"'] & 
  \ar[l, phantom, pos=0.3, "\stackrel{\epsilon'}\Leftarrow" description] 
  \ar[r, phantom, pos=0.3, "\stackrel\epsilon\Rightarrow" description]
  & T \ar[ld, "g"] \\
  & \P &
  \end{tikzcd}
  \qquad
  \text{and}
  \qquad
  \begin{tikzcd}[row sep={2em,between origins}, column sep={4.8em,between origins}]
   & N \ar[dd] \ar[ld] \ar[rd] &  \\
  S \ar[rd, "f"'] & 
  \ar[l, phantom, pos=0.3, "\stackrel{\delta'}\Leftarrow" description] 
  \ar[r, phantom, pos=0.3, "\stackrel\delta\Rightarrow" description] 
  & T  ,  \ar[ld, "g"] \\
  & \P &
  \end{tikzcd}
  \]
  a {\em morphism of $\P$-spans} from $\widehat M$ to $\widehat N$ is a quadruple
  $$
  (u,\theta, \sigma, \tau )
  $$
  with $u: M \to N$ a morphism of groupoids, and where $\theta$, $\sigma$, 
  and $\tau$ are $2$-cells
  \[
  \begin{tikzcd}[column sep={2em,between origins}]
	M  \ar[rr, "u"] \ar[rd] & \ar[d, phantom, pos=0.3, "\stackrel\Rightarrow\theta" 
	description] & N \ar[ld] \\
	& \P  &
  \end{tikzcd}
  \qquad\qquad
  \begin{tikzcd}[row sep={2em,between origins}, column sep={4.8em,between origins}]
   & M \ar[dd, "u" description] \ar[ld] \ar[rd] &  \\
  S  & \ar[l, phantom, pos=0.3, "\sigma\Downarrow" description] \ar[r, phantom, 
  pos=0.4, "\Downarrow\tau" description] & T , \\
  & \ar[ul] N \ar[ur] &
  \end{tikzcd}
  \]
required to be compatible in that they satisfy the two equations
  \begin{equation}\label{eq:two-eqs}
    \begin{tikzcd}[column sep={4em,between origins}, row sep={1.1em,between origins}]
  &  M \ar[ld, pos=0.40, "u"'] \ar[dddd]
  \\
   N \ar[dd]  \ar[rddd] & 
  \\[3pt]
  &
  \ar[ld, phantom, pos=0.81, "\delta'"]
  \ar[lu, phantom, pos=0.35, "\theta"]
  \\[3pt]
   S \ar[rd] &
  \\
  & \P 
  \end{tikzcd}
  =
  \begin{tikzcd}[column sep={4em,between origins}, row sep={1.1em,between origins}]
   & M \ar[ld, pos=0.40, "u"'] \ar[dddd]  \ar[lddd]
  \\
   N \ar[dd] &
  \\[3pt]
  &
  \ar[lu, phantom, pos=0.85, "\sigma"] 
  \ar[ld, phantom, pos=0.35, "\epsilon'"]
  \\[3pt]
   S \ar[rd] &
  \\
  & \P
  \end{tikzcd}
  \qquad\qquad
  \begin{tikzcd}[column sep={4em,between origins}, row sep={1.1em,between origins}]
  M \ar[rd, pos=0.40, "u"] \ar[dddd] & 
  \\
  & N \ar[dd]  \ar[lddd] 
  \\[3pt]
  \ar[ru, phantom, pos=0.35, "\theta" description] 
  \ar[rd, phantom, pos=0.81, "\delta" ] & 
  \\[3pt]
  & T \ar[ld]
  \\
  \P &
  \end{tikzcd}
  =
  \begin{tikzcd}[column sep={4em,between origins}, row sep={1.1em,between origins}]
  M \ar[rd, pos=0.40, "u"] \ar[dddd]  \ar[rddd] & 
  \\
  & N \ar[dd] 
  \\[3pt]
  \ar[ru, phantom, pos=0.85, "\tau" description] 
  \ar[rd, phantom, pos=0.35, "\epsilon" description] 
  & 
  \\[3pt]
  & T \ar[ld]
  \\
  \P &
  \end{tikzcd}
  \end{equation}
  (In the left-hand equation, the $2$-cells go left; in the right-hand equation the 
  $2$-cells go right, but in any case they are of course invertible.) 
  We see that $\theta$ is completely determined by $\sigma$, $\epsilon$, and
  $\delta$, and that $\theta$ is also completely determined by $\tau$,
  $\epsilon'$ and $\delta'$. In conclusion, the $\P$-span morphism is given by $u$
  together with the $2$-cells $\sigma$ and $\tau$, required to be compatible so as to
  induce the same $\theta$.
\end{blanko}

The general notion of morphism of $\P$-spans will only be used in some auxiliary 
constructions. Mostly we are only interested in the invertible morphisms:
\begin{blanko}{Equivalences of $\P$-spans.}
  An {\em equivalence} between $\P$-spans is a morphism $(u,\theta,\sigma,\tau)$ for which 
  $u$ is an equivalence. The following is a routine computation.
\end{blanko}

\begin{lemma}\label{lem:equivalent-spans}
  Equivalent spans define equivalent functors.
\end{lemma}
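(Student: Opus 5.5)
The plan is to write down, for an equivalence $(u,\theta,\sigma,\tau)$ from $\widehat M$ to $\widehat N$, an explicit natural equivalence of functors
$$
q_M{}\lowershriek \circ p_M^{\,\upperstar} \;\Rightarrow\; q_N{}\lowershriek \circ p_N^{\,\upperstar} \colon \ZZ_{/f}\to\ZZ_{/g}.
$$
We work throughout in $\ZZ$. The legs $p_M$ and $q_M$ are morphisms in $\ZZ$, namely $(p_M,\epsilon')$ and $(q_M,\epsilon)$, and similarly for $N$ with $\delta',\delta$. The first step is to reinterpret the data of the statement. By the two equations \eqref{eq:two-eqs}, the triple $(u,\theta,\sigma)$ is a $2$-cell in $\ZZ$ exhibiting a homotopy $(p_N,\delta')\circ(u,\theta)\simeq(p_M,\epsilon')$. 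Likewise $(u,\theta,\tau)$ exhibits $(q_N,\delta)\circ(u,\theta)\simeq(q_M,\epsilon)$. In other words, an equivalence of $\P$-spans is exactly an equivalence $U=(u,\theta)\colon M\to N$ in the $2$-category $\ZZ$ together with $2$-cells in $\ZZ$ relating the legs. The compatibility condition is precisely what makes $\sigma$ and $\tau$ into $2$-cells over $\P$ with the same underlying $\theta$.

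With this reformulation, the rest follows from formal properties of pullback and composition functors in the $2$-category $\ZZ$. In order, I would use three facts.

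First, $(-)\upperstar$ and $(-)\lowershriek$ are pseudofunctorial: $(P_N\circ U)\upperstar\simeq U\upperstar P_N\upperstar$ and $(Q_N\circ U)\lowershriek\simeq Q_N{}\lowershriek U\lowershriek$. The second is immediate. The first holds by pasting of homotopy pullbacks in $\ZZ$.

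Second, a $2$-cell $\alpha\colon F\Rightarrow F'$ between morphisms in $\ZZ$ induces natural equivalences $F\upperstar\simeq F'{}\upperstar$ and $F\lowershriek\simeq F'{}\lowershriek$. For $\lowershriek$ this is just whiskering by $\alpha$. For $\upperstar$ it comes from the universal property of the homotopy pullback.

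Third, because $U$ is an equivalence in $\ZZ$, we have $U\lowershriek U\upperstar\simeq\id$.

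Chaining these gives
$$
Q_M{}\lowershriek P_M\upperstar \simeq (Q_NU)\lowershriek (P_NU)\upperstar \simeq Q_N{}\lowershriek U\lowershriek U\upperstar P_N\upperstar \simeq Q_N{}\lowershriek P_N\upperstar .
$$

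The third fact is the step that needs care. The danger is that $U$ being an equivalence of underlying groupoids might not make it an equivalence in $\ZZ$. It does, though. Choose a quasi-inverse $v$ of $u$, and define $\theta_v$ by whiskering $\theta^{-1}$ with $v$ and composing with the unit/counit. The triangle identities then follow from those of the adjoint equivalence $u\dashv v$, since $2$-cells in the weak slice are just $2$-cells upstairs compatible with the triangles. Once $U$ is an equivalence in $\ZZ$, pulling back along it and then composing with it is naturally equivalent to the identity on $\ZZ_{/N}$.

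The main obstacle I expect is bookkeeping rather than substance: checking that each natural equivalence really is a $2$-cell over $\P$, so that the parity data ($\epsilon,\epsilon',\delta,\delta'$) is transported correctly. This is exactly where the compatibility equations \eqref{eq:two-eqs} are used. I would verify it componentwise: on an object $X\to S$ of $\ZZ_{/f}$, write out the pullback $X\times_S M$ as a groupoid over $\P$, and check that the comparison map to $X\times_S N$ induced by $(u,\sigma)$ commutes over $\P$ up to the $2$-cell built from $\theta$.
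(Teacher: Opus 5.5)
Your argument is correct. The paper gives nothing beyond ``a routine computation'' here, and your reduction is a sound way of carrying it out. The equations \eqref{eq:two-eqs} say precisely that $\sigma$ and $\tau$ are $2$-cells in $\ZZ$, from $(p_N,\delta')\circ(u,\theta)$ to $(p_M,\epsilon')$ and from $(q_N,\delta)\circ(u,\theta)$ to $(q_M,\epsilon)$. After that, the chain $Q_{M!}P_M^*\simeq (Q_NU)_!(P_NU)^*\simeq Q_{N!}U_!U^*P_N^*\simeq Q_{N!}P_N^*$ is formal.

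One remark: once the data is recast inside $\ZZ$, the componentwise check over $\P$ in your last paragraph is superfluous. Every equivalence in the chain is already a $2$-cell of $\ZZ$, so it automatically respects the parity data.
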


We can use the notion of $\P$-span equivalence to simplify calculations:
\begin{lemma}
  Every $\P$-span \eqref{eq:BZ2span} is equivalent to one where $\epsilon'$ is trivial
  (and also equivalent to one where $\epsilon$ is trivial). The equivalence can be 
  taken to fix the underlying span, so that only the parity structure on the apex 
  groupoid $M$ changes.
\end{lemma}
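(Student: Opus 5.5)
Given the $\P$-span \eqref{eq:BZ2span}, I will change only the map $m: M\to\P$ and keep $p$, $q$ and $u=\id_M$. The natural candidate is $m' := f\circ p$. With this choice the left $2$-cell $\delta': f p \Rightarrow m'$ can be taken to be the identity. The right $2$-cell must then be the composite
$$
\delta := \epsilon\cdot\epsilon'^{-1} : m' = fp \Rightarrow m \Rightarrow gq ,
$$
pasting $\epsilon'^{-1}$ with $\epsilon$ (with the conventions of directions as in \eqref{eq:BZ2span}, inverting where necessary, which is harmless since all $2$-cells are invertible). This gives a new $\P$-span $\widehat{M'}$ with the same underlying span, trivial $\epsilon'$-part, and apex parity structure $fp$.

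Next I will exhibit an equivalence of $\P$-spans $(u,\theta,\sigma,\tau)$ from $\widehat M$ to $\widehat{M'}$ in the sense of \ref{morphism-of-spans}. I take $u=\id_M$ and $\sigma=\tau=\id$, since the legs are unchanged. For $\theta: m\Rightarrow m'$ I take $\theta:=\epsilon'^{-1}$, or $\epsilon'$ depending on the direction convention, so that in the left-hand equation of \eqref{eq:two-eqs} the composite of $\theta$ with $\delta'=\id$ equals $\epsilon'$ composed with $\sigma=\id$. In the right-hand equation, pasting $\theta$ with $\delta=\epsilon\cdot\epsilon'^{-1}$ should give $\epsilon$, which is the statement that $\epsilon'^{-1}$ and $\epsilon'$ cancel. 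Both equations thus reduce to identities of natural transformations $M\to\P$, which can be checked componentwise on points of $M$ as equalities of elements of $O(1)$. Since $u$ is the identity, this is an equivalence, and by Lemma~\ref{lem:equivalent-spans} the two spans define equivalent functors.

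The symmetric statement, making $\epsilon$ trivial instead, follows by the same argument with $m'':=g q$, $\delta:=\id$ and $\delta' := \epsilon'\cdot\epsilon^{-1}$ (suitably oriented), with $\theta$ built from $\epsilon$.

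The only real obstacle is bookkeeping: getting the directions of the $2$-cells consistent with the pasting conventions in \eqref{eq:two-eqs}, so that $\theta$ is chosen in the correct direction and both compatibility equations hold simultaneously. Since $\P$ has abelian automorphism group $O(1)$, each component is just a sign. Any orientation mistake amounts to replacing a sign by its inverse, which is the same element of $O(1)$, so the componentwise check is robust. Naturality of the new $\delta$ is automatic, because it is a composite of natural transformations.
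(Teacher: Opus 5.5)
Your proposal is correct and is essentially the paper's own proof. The paper likewise keeps the underlying span and gives the apex the parity structure $f\circ p$, with trivial left $2$-cell and right $2$-cell $(\epsilon')^{-1}\epsilon$; it then takes the equivalence $(u,\theta,\sigma,\tau)=(\id_M,\epsilon',\id,\id)$ and checks the two compatibility equations componentwise in $O(1)$, exactly as you do.
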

\begin{proof}
  The equivalent span is
  \[
  \begin{tikzcd}[row sep={2.5em,between origins}, column sep={5.6em,between origins}]
  & M \ar[dd] \ar[ld, "p"'] \ar[rd, "q"] &  \\
  S \ar[rd, "f"'] &
  \ar[l, phantom, pos=0.3, "\commutes" description]
  \ar[r, phantom, pos=0.36, "\footnotesize (\epsilon')^{-1}\epsilon" description]
  & T \ar[ld, "g"] \\
  & \P &
\end{tikzcd}
\]
   For the equivalence, define $(u,\theta)$ to be the triangle
    $$
  \begin{tikzcd}[column sep={2em,between origins}]
	M  \ar[rr, "\id"] \ar[rd, "m"'] & \ar[d, phantom, pos=0.3, "\epsilon'"] & N \ar[ld, 
    "f \circ p"] \\
	& \P  &
  \end{tikzcd}
  $$
  (with $\sigma$ and $\tau$ identities).
  The equations \eqref{eq:two-eqs} are immediately checked.
\end{proof}

As a consequence, the equivalence class of a span \eqref{eq:BZ2span} depends only on
the `ratio' between the two $2$-cells
$$
\rho := (\epsilon')^{-1} \epsilon .
$$

\begin{blanko}{Simplified spans.}
  It turns out to be more practical to define a span to be simply
\begin{equation}\label{eq:BZ2span-simplified}
\begin{tikzcd}[row sep={2em,between origins}, column sep={4.8em,between origins}]
  & M  \ar[ld, "p"'] \ar[rd, "q"] &
  \\
  S \ar[rd, "f"'] & \stackrel\rho\Rightarrow & T \ar[ld, "g"]
  \\
  & \P &
\end{tikzcd}
\end{equation}
because such simplified spans interact more elegantly with pullbacks.
Compared to the original span~\eqref{eq:BZ2span}, we have $\rho =
(\epsilon')^{-1} \cdot \epsilon$.

Spelled out, the natural transformation $\rho$ is thus the data of: for each point 
$x\in M$ an arrow $\rho_x$ in $\P$, such that for each arrow $a:x\to x'$ in $M$
we have the equation
$$
\begin{tikzcd}
pfx \ar[d, "pf(a)"'] \ar[r, "\rho_x"] & gqx \ar[d, "gq(a)"]  \\
pfx' \ar[r, "\rho_{x'}"'] & gqx'
\end{tikzcd}
$$
in the groupoid $\P$.

A simplified span is thus an equivalence class of fully specified spans 
as in \eqref{eq:BZ2span} where the equivalence relation is modulo equivalences of
$\P$-spans whose underlying equivalence of spans is the identity.
\end{blanko}

\begin{blanko}{Remark.}
  One reason for the utility of simplified spans and the convention of putting the
  $2$-cell to the right, is that the natural functor $\ZZ\to\linpm$ lands here: the
  functor sends $X{\to}\P$ to the slice $\ZZ_{/X}$ and it sends a morphism
    $$
  \begin{tikzcd}[column sep={2em,between origins}]
	X  \ar[rr] \ar[rd] & \ar[d, phantom, pos=0.3, "\rho"] & Y \ar[ld] \\
	& \P  &
  \end{tikzcd}
  $$
  to
  the functor $\ZZ_{/X}\to\ZZ_{/Y}$ given by
  $$ 
  \begin{tikzcd}[row sep={2em,between origins}, column sep={4.8em,between origins}]
      & X \ar[ld] \ar[dd] \ar[rd] &  \\
      X \ar[rd] &
      \ar[l, phantom, pos=0.3, "\commutes" description]
     \ar[r, phantom, pos=0.3, "\rho" description]
   & Y \ar[ld] \\
      & \P  ,  &
    \end{tikzcd}
  $$

  One way worry that in a simplified span the apex does not come 
  equipped with a parity structure, and therefore does not describe a well-defined linear 
  functor $q\lowershriek \circ p\upperstar : \ZZ_{/f}  \to  \ZZ_{/g}$. 
  However, a simplified span \eqref{eq:BZ2span-simplified} is equivalent to both
  \[
  \begin{tikzcd}[row sep={2em,between origins}, column sep={4.8em,between origins}]
   & M  \ar[ld, "p"'] \ar[rd, "q"] \ar[dd] &  \\
  S \ar[rd, "f"'] &
  \ar[l, phantom, pos=0.3, "\rho^{-1}" description] 
  \ar[r, phantom, pos=0.3, "\commutes" description] 
    & T \ar[ld, "g"] \\
  & \P &
  \end{tikzcd}
  \qquad
  \begin{tikzcd}[row sep={2em,between origins}, column sep={4.8em,between origins}]
   & M  \ar[ld, "p"'] \ar[rd, "q"] \ar[dd] &  \\
  S \ar[rd, "f"'] &
  \ar[l, phantom, pos=0.3, "\commutes" description] 
  \ar[r, phantom, pos=0.3, "\rho" description] 
    & T \ar[ld, "g"] \\
  & \P &
  \end{tikzcd}
  \]
  which have equivalent associated linear 
  functors,
  $$
  (q, \id)\lowershriek \circ (p,\rho^{-1})\upperstar
  \simeq
  (q, \rho)\lowershriek \circ (p, \id)\upperstar
  $$
  by Lemma~\ref{lem:equivalent-spans}.
\end{blanko}

\begin{blanko}{Composition of spans and the bicategory $\linpm$.}
  $\P$-spans are composed by pullback (which of course means homotopy
  pullback). Beck--Chevalley for pullback squares ensures that the linear
  functor defined by the pullback-composite of two $\P$-spans is canonically
  equivalent to the composition of the two individual functors associated
  to the two $\P$-spans.
  The identity span $S\stackrel=\leftarrow S \stackrel=\rightarrow S$
  (with the identity $2$-cell down to $\P$) defines the identity functor
  $\ZZ_{/f} \stackrel{\id}\rightarrow \ZZ_{/f}$.

  Altogether we have defined the bicategory $\linpm$: its objects are
  slices $\ZZ_{/f}$, and its morphisms are functors given by $\P$-spans (including those
  $2$-cells). The $2$-cells of the bicategory are equivalences of $\P$-spans.
  At the level of functors, these are the natural equivalences of linear
  functors.
  They do not play a big role in the following, and one could also get away
  with considering the $1$-category where the morphisms are equivalence
  classes of $\P$-spans.
\end{blanko}

\begin{blanko}{Remark.}
  One may simply say that $\linpm$ is the category whose objects are parity structures
  and whose morphisms are $\P$-spans, but the viewpoint of slice categories is
  preferred because the slice is the finite-homotopy-sum completion, and since one
  actually sees the linear functors (rather than just seeing matrix algebra).
  Furthermore, the slice viewpoint generalises elegantly to the case of an infinite
  base (slicing over an infinite groupoid) which is important in combinatorics. For
  example, the distinction (for an infinite set $S$) between the vector space $\Q_S$ and the
  profinite-dimensional vector space $\Q^S$ is very cleanly modelled by $\FF_{/S}$ and
  $\FF^{S}$ (which are {\em not} equivalent for infinite $S$). These points are made
  convincingly in the paper {\em Homotopy linear
  algebra}~\cite{Galvez-Kock-Tonks:1602.05082}.
\end{blanko}

\subsection{Monoidal structures}

Since the group $O(1)$ is abelian, the category $\P=BO(1)$ is monoidal (in 
fact symmetric monoidal): the 
monoidal structure
$$
\P \times \P \stackrel{\cdot}\longrightarrow \P
$$
is the unique assignment on points, and multiplication on arrows.

Since $\P$ is  monoidal, the slice $\ZZ=\grpd_{/\P}$ acquires a convolution 
tensor product:
$$
\begin{tikzcd}[row sep={2.6em,between origins}] S \ar[d, "f"'] \\ \P\end{tikzcd} 
  * 
\begin{tikzcd}[row sep={2.6em,between origins}] T \ar[d, "g"] \\ \P\end{tikzcd} 
 \ := \
  \begin{tikzcd}[row sep={2.5em,between origins}] 
	S\times T \ar[d, "f\times g"'] \\ \P\times\P 
	\ar[d, "\cdot"]\\ \P\end{tikzcd} 
$$
The unit for this monoidal structure is $\e: 1 \to \P$.
(Note that under the equivalence $\grpd_{/\P} \simeq 
\grpd^{\P}$, the monoidal 
structure $*$ corresponds to Day convolution.)

The canonical functor $\e\lowershriek : \grpd\simeq \grpd_{/1} \to \grpd_{/\P}$
is monoidal for the cartesian product on $\grpd$ and the convolution product on
$\grpd_{/\P}$, displayed for emphasis as
$$
(\FF,\times,1) \longrightarrow (\ZZ,\,*\,,\e) .
$$

\begin{blanko}{Tensor product.}
  The bicategory $\linpm$ now acquires a monoidal structure given by
  $$
  \ZZ_{/f} \atensor \ZZ_{/g}  := \ZZ_{/f*g} ,
  $$
  which is the objective analogue of the tensor product of vector spaces.
  The unit object for the monoidal structure is the slice $\ZZ_{/\e}$.
  It plays the role of the ground field. Note that $\ZZ_{/\e}$ is not equivalent 
  to $\ZZ$ (because $\e$ is not a terminal object).

  From its definition as a convolution product, it follows that $\ZZ_{/f} \atensor \ZZ_{/g}$
  can be characterised by a universal property, as the recipient of
  the universal bilinear functor.

  \bigskip

  We have a canonical monoidal functor
  $$
  (\lin,\tensor,\FF) \longrightarrow (\linpm,\atensor,\ZZ_{/\e}) ,
  $$
  which takes $\FF_{/S}$ to $\ZZ_{/S}$ where the second $S$ is really
  $S\to\P$ with the trivial parity structure (all arrows are even). This
  means that ordinary objective linear algebra over groupoids (as outlined in 
  \ref{sub:lin}) embeds into
  the new framework (and it will be compatible also with the notion of cardinality
  we introduce in Section~\ref{sec:card}).
\end{blanko}

\subsection{The negative of a span}

\label{sec:negative-span}

The {\em negative} of a $\P$-span 
\[
\begin{tikzcd}[row sep={2em,between origins}, column sep={4.8em,between origins}]
  & M \ar[ld] \ar[rd] &  \\
  S \ar[rd] & \epsilon & T \ar[ld] \\
  & \P &
\end{tikzcd}
\]
denoted $\overline M$, is by definition the same span of groupoids, but
with the opposite $2$-cell, denoted $\overline \epsilon$, so that for
each $x\in M$, we have
$$
\overline\epsilon _x = -1 \cdot \epsilon_x .
$$

In particular, for any $f:S\to \P$, consider the identity $\P$-span $S
\stackrel{=}\leftarrow S \stackrel{=}\to S$, with identity $2$-cell
$\epsilon_x= +1$ for all $x\in S$; then the negative $\overline S$ 
is the same span at the level of groupoids, 
but with $2$-cell $\overline \epsilon_x$ given by
$\overline\epsilon_x=-1$ for all $x\in S$. 

With reference to this $\overline S$,
the negative of $M$ can be obtained by precomposing
with $\overline S$:
$$
\overline M = \overline S \cdot M 
$$
(or by postcomposing with $\overline T$).

\subsection{Scalars}

\nocite{Kraus:1983}

\begin{blanko}{States, costates, scalars.}
  Recall that a {\em state} of an object $V$ in a monoidal category
  $(\CC,\tensor,\one)$ is a morphism $\one\to V$. A {\em costate} (or {\em 
  effect}~\cite{Kraus:1983}) is a
  morphism $V\to \one$, and a {\em scalar} is a endomorphism
  $\one\to\one$. In $(\vect,\tensor,\ground)$, an endomorphism 
  $\ground\to\ground$ is the same thing as just an element of the ground 
  field $\ground$, but in $(\linpm, \atensor,\ZZ_{/\e})$ there is a difference:
\end{blanko}

In $\linpm$,
the `ground field' is
$\ZZ_{/\e}$, and an element in this slice is a triangle
$$
\begin{tikzcd}[column sep={2em,between origins}]
  S  \ar[rr] \ar[rd] & \ar[d, phantom, pos=0.3, "\theta" 
  description] & 1 \ar[ld, "\e"] \\
  & \P     &
\end{tikzcd}
$$
(that is, an orientation of $S$).
In contrast:

\begin{blanko}{Scalars.}
  A {\em scalar} is a linear endofunctor $\ZZ_{/\e} \to \ZZ_{/\e}$, that is, a $\P$-span
\[
\begin{tikzcd}[row sep={2em,between origins}, column sep={4.8em,between origins}]
  & S \ar[ld] \ar[rd] &  \\
  1 \ar[rd, "\e"'] & \rho & 1   . \ar[ld, "\e"] \\
  & \P &
\end{tikzcd}
\]
We write $\widehat S$ to refer to the whole $\P$-span. In the fully-specified-span 
viewpoint, a scalar structure thus amounts to {\em two} orientations on a parity structure.

A {\em morphism of scalars} is just a morphisms of spans. (We will mostly be 
interested in the invertible morphisms of scalars.)
This defines the category of scalars $\scalars$.
\end{blanko}

\begin{blanko}{Important examples.}
  Here are three all-important scalars, to be thought of as $1$, $-1$, and a kind 
of $0$:
\[
\begin{tikzcd}[row sep={2em,between origins}, column sep={4.8em,between origins}]
  & 1 \ar[ld] \ar[rd] &  \\
  1 \ar[rd, "\e"'] & \commutes & 1  , \ar[ld, "\e"] \\
  & \P &
\end{tikzcd}
\qquad
\begin{tikzcd}[row sep={2em,between origins}, column sep={4.8em,between origins}]
  & 1 \ar[ld] \ar[rd] &  \\
  1 \ar[rd, "\e"'] & (-1) & 1  , \ar[ld, "\e"] \\
  & \P &
\end{tikzcd}
\qquad
\begin{tikzcd}[row sep={2em,between origins}, column sep={4.8em,between origins}]
  & 1\sqcup 1 \ar[ld] \ar[rd] &  \\
  1 \ar[rd, "\e"'] & \pi & 1   . \ar[ld, "\e"] \\
  & \P &
\end{tikzcd}
\]
The second scalar is the negative of the first (and they are not 
equivalent scalars).
The third scalar is the sum of the two previous ones: the natural transformation
$\pi$ is positive on one component and negative on the other. The importance of
this span is that the square diagram $\pi$ is precisely the homotopy pullback, or in
other words the loop space of $\P$, and we write
$$
1 \times_{\P} 1 = \Omega\P = 1 \sqcup 1 = \{ \pp,\mm \}  .
$$
\end{blanko}

\begin{blanko}{Sign splitting.}
  By the universal property of the pullback, the span
  $\widehat{\Omega\P}$ is the terminal scalar: for any other scalar
  $\widehat S$, there is induced a unique morphism of scalars
  $$
  S \to 1 \times_{\P} 1 = \{ \pp,\mm \}
  $$
  and thereby a sum splitting of the scalar $\widehat S$
  into two parts, a positive part and a negative part
  \begin{equation}\label{eq:sign-splitting}
  S = S_{\pp} \sqcup S_{\mm}  .
  \end{equation}
  When emphasis on this splitting is required, we shall write $\widehat{S}$ 
  as $\big( S_{\pp}, S_{\mm} \big)$.
\end{blanko}

In the fully-specified-span viewpoint (as in \eqref{eq:BZ2span}), the structure of
scalar on a parity groupoid can be interpreted as the data of {\em two} orientations.
The sign splitting then amounts to separating out the components where the
orientations agree and the components where the orientations do not agree. In the
simplified-span viewpoint \eqref{eq:BZ2span-simplified} often preferred, the two
orientations are not well defined individually, but their ratio is.

The sign splitting of a scalar is the first place where significant signs
appear. Unlike parities (and orientations), the sign splitting of scalars
as in \eqref{eq:sign-splitting} is a homotopy-invariant notion:

\begin{lemma}
  Given scalars $\widehat S=(S_\pp,S_\mm)$ and $\widehat T=(T_\pp,T_\mm)$, to give an 
  equivalence of scalars $\widehat S\simeq \widehat T$ is the same as giving two equivalences 
  of groupoids $S_{\pp}\isopil T_{\pp}$ and $S_{\mm}\isopil T_{\mm}$.
\end{lemma}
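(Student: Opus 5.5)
The plan is to use the simplified-span description of scalars and to exploit the fact that $\widehat{\Omega\P}$ is the terminal scalar.

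First, unwind what an equivalence of scalars $\widehat S\simeq\widehat T$ is. By \ref{morphism-of-spans} it is a quadruple $(u,\theta,\sigma,\tau)$ with $u:S\to T$ an equivalence of groupoids. The legs of both spans go to the terminal groupoid $1$, so the $2$-cells $\sigma$ and $\tau$ are necessarily identities. In the simplified-span picture (ratios $\rho$ for $S$ and $\rho'$ for $T$), the compatibility conditions \eqref{eq:two-eqs} then reduce to a single condition: for every point $x\in S$,
$$
\rho'_{u(x)}=\rho_x .
$$
The strategy is to first replace each span by an equivalent fully specified span with $\epsilon'$ trivial, using the lemma preceding the simplified spans. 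Then $\theta$ is forced to equal the parity comparison, and the two equations collapse to exactly $\rho_x=\rho'_{ux}$.

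Second, interpret this condition through the sign splitting. The canonical map $S\to\Omega\P=\{\pp,\mm\}$ sends $x$ to $\rho_x\in O(1)$, and $S_\pp$, $S_\mm$ are the fibres of this map, and likewise for $T$. The condition $\rho'_{ux}=\rho_x$ says precisely that $u$ commutes strictly with the maps to the discrete set $\{\pp,\mm\}$. Equivalently, $u$ is a morphism over the terminal scalar, which is automatic by terminality once $u$ is a morphism of scalars. Since $\{\pp,\mm\}$ is discrete, such a map $u$ is exactly a coproduct $u_\pp\sqcup u_\mm$ of maps $S_\pp\to T_\pp$ and $S_\mm\to T_\mm$. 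Moreover, $u$ is an equivalence if and only if both $u_\pp$ and $u_\mm$ are.

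Third, check the converse: given equivalences $u_\pm$, their sum $u$ satisfies $\rho'_{ux}=\rho_x$ by construction, so it defines a morphism of scalars. The two assignments, from scalar equivalences to pairs of equivalences and back, are inverse to each other. If one tracks $2$-cells between equivalences, these also match, since homotopies between maps over a discrete base are fibrewise.

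The main obstacle is the first step: carefully verifying that \eqref{eq:two-eqs} imposes no further data and no further constraint beyond $\rho'_{ux}=\rho_x$. In particular one must confirm that $\theta$ is uniquely determined, so that there is no hidden choice of sign per component that would make scalar equivalences more numerous than pairs of groupoid equivalences. I would handle this by working with representatives where $\epsilon'=\delta'=\id$. Then the left-hand equation forces $\theta=\id$ on components, and the right-hand equation reads $\delta_{ux}=\epsilon_x$.
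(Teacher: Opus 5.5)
Your proposal is correct, and it is essentially the argument the paper has in mind. The paper gives no separate proof: it reads the lemma off from the terminality of $\widehat{\Omega\P}$ (the universal property of the pullback $1\times_\P 1=\{\pp,\mm\}$), i.e.\ from Proposition~\ref{prop:End=SS} identifying scalars with groupoids over the discrete set $\{\pp,\mm\}$, and your explicit reduction of \eqref{eq:two-eqs} to $\rho'_{ux}=\rho_x$ (with $\sigma,\tau$ trivial and $\theta$ forced) is a hands-on verification of that universal property, after which the fibrewise splitting over a discrete base finishes the job as you describe.
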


More precisely:
\begin{prop}\label{morphism-of-scalars}\label{prop:End=SS}
  The category of scalars $\scalars$ is equivalent to the category $\grpd_{/\Omega\P} =
  \grpd_{/\{\pp,\mm\}}$. This equivalence is monoidal: the monoidal structure on
  scalars is the convolution product $*$, and the monoidal structure on
  $\grpd_{/\{\pp,\mm\}}$ is also the convolution product (with respect to the
  monoidal structure on $\Omega \P$).
\end{prop}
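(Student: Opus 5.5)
The plan is to identify a scalar with a morphism into the terminal scalar $\widehat{\Omega\P}$, and then to read off both the equivalence and the monoidal compatibility from the universal property of the homotopy pullback $1\times_\P 1 = \Omega\P$.

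\textbf{Objects.} A simplified span from $1$ to $1$ over $\P$ consists of a finite groupoid $S$, the two (unique) maps $S\to 1$, and a $2$-cell $\rho$ between the two composites $S\to 1\stackrel{\e}\to\P$. By the universal property of the homotopy pullback, such data is the same as a map $S\to 1\times_\P 1=\Omega\P=\{\pp,\mm\}$. Concretely, $\rho$ is natural, and both composites are constant at the trivial functor, so every arrow of $S$ goes to the identity in $\P$. Hence $\rho_x$ is locally constant on $S$, and $x\mapsto\rho_x\in O(1)$ is exactly a map of groupoids $S\to\{\pp,\mm\}$. This gives a functor $\Phi:\scalars\to\grpd_{/\{\pp,\mm\}}$ on objects, namely the sign splitting~\eqref{eq:sign-splitting}.

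\textbf{Morphisms.} A morphism of scalars is a quadruple $(u,\theta,\sigma,\tau)$ as in~\ref{morphism-of-spans}. Since the legs go to the terminal groupoid $1$, the $2$-cells $\sigma,\tau$ are trivial; $\theta$ is then forced by~\eqref{eq:two-eqs}. So a morphism amounts to $u:S\to N$ with $\rho^N_{u(x)}=\rho^S_x$. This is the main point to verify carefully. I first pass to fully specified spans with $\epsilon'$ trivial and $\epsilon=\rho$; equation~\eqref{eq:two-eqs} then says $\theta=\id$ on the left. The right-hand equation then says $\delta_{u x}=\epsilon_x$, that is, $u$ commutes strictly (equivalently, up to the unique $2$-cell in the discrete groupoid $\{\pp,\mm\}$) with the maps to $\Omega\P$. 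A morphism in the slice $\grpd_{/\{\pp,\mm\}}$ is a map together with a $2$-cell into a discrete groupoid, so such $2$-cells are identities and the two hom-groupoids match. $2$-cells between morphisms likewise correspond. Essential surjectivity follows from the object-level bijection, so $\Phi$ is an equivalence. The expected obstacle is the bookkeeping: checking that passing to simplified spans does not lose or duplicate data in the hom-groupoids, that is, that the quotient by identity-fixing equivalences is compatible with morphisms. I would handle this by working throughout in the normalised form $\epsilon'=\id$.

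\textbf{Monoidality.} The monoidal structure on scalars is composition of spans, which here is pullback over $1$, that is, the product $S\times T$. The composite $2$-cell at $(x,y)$ is the pasting $\rho^S_x\cdot\rho^T_y$. This is the same as $S\times T\to\Omega\P\times\Omega\P\stackrel{\cdot}\to\Omega\P$, the convolution product on $\grpd_{/\Omega\P}$. It also agrees with $*$ on $\ZZ$ restricted to $\ZZ_{/\e}$-endomorphisms, and units match: the identity scalar $1$ with trivial $\rho$ corresponds to $\name{\pp}:1\to\Omega\P$. Associativity and unit constraints are inherited from those of cartesian product, since both sides are computed by the same products. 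Finally, a short check shows that horizontal pasting of $2$-cells in $\P$ is multiplication in $O(1)$, using that $O(1)$ is abelian and that the monoidal structure on $\P$ is induced by multiplication.
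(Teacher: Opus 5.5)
Your proposal is correct and is essentially the paper's own argument. The equivalence comes from the universal property of the homotopy pullback $1\times_\P 1=\Omega\P$ (the sign splitting, with $\widehat{\Omega\P}$ terminal), and monoidality comes from the observation that composing two scalars gives $S\times T$ with sign $\rho_s\cdot\rho'_t$, i.e.\ the Brahmagupta-rule computation in the remark following the proposition. Your normalisation $\epsilon'=\id$ simply makes explicit the morphism-level bookkeeping that the paper leaves implicit: $\sigma,\tau$ are trivial, $\theta$ is forced, and $u$ must preserve the sign splitting strictly.
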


\begin{blanko}{Remark.}
  A neat feature of the
  scalars-as-linear-endomorphisms-of-the-ground-field viewpoint is that
  multiplication of scalars becomes composition of linear maps.
  (Equivalently, it is the tensor product.) Spelling out the composition 
  of two
  scalars
  \[
  \begin{tikzcd}[row sep={2em,between origins}, column sep={3em,between origins}]
    && S \times S' \ar[ld] \ar[rd] &&
    \\
    & S \ar[ld] \ar[rd] & \commutes & S' \ar[ld] \ar[rd]
    \\
    1 \ar[rrdd] & \rho & 1 \ar[dd] & \rho' & 1 \ar[lldd] 
    \\
    &&&&
    \\
    && \P  &&
  \end{tikzcd}
  \]
  we find that the sign of an element $(s,s')$ is
  $$
  \rho_s \cdot \rho'_{s'}  ,
  $$
  where the $\cdot$ is composition of arrows in $\P$ (that is, the group 
  law of $O(1)=\{+1,-1\}$).
  An element $(s,s')$ is thus positive precisely when $s$ and $s'$ have the 
  same sign, and is negative precisely $s$ and $s'$ have opposite 
  signs. 
  This gives the formula
  $$
  \big( S_{\pp}, S_{\mm} \big) \cdot \big( S'_{\pp}, S'_{\mm} \big)
  = 
  \big( S_{\pp}\cdot S'_{\pp} + S_{\mm} \cdot S'_{\mm} \ ,
  \ S_{\pp}\cdot S'_{\mm} + S_{\mm} \cdot S'_{\pp} \big) ,
  $$
  which is the Brahmagupta sign rule, which is thus
  a consequence of the linear-algebra formalism (but of course ultimately a 
  consequence of that rule in $O(1)$).
\end{blanko}

\subsection{States (vectors)}

A {\em state} of an object $S$ in a monoidal category $(\CC,\tensor,\one)$ is
a linear map $\one \to S$. In many familiar concrete monoidal categories, this is 
essentially the same thing as an element of $S$. For example, in 
$(\kat{Set},\times,1)$, a state of a set $S$ is a morphism $1 \to S$, which is 
the same thing as an element in $S$, and in $(\vect, 
\tensor, \Q)$, a state $\Q \to V$ of a vector space $V$ is the same thing
as a vector in $V$. For our monoidal category $\linpm$, there is a 
difference, though. An object in $\ZZ_{/f}$ is
a triangle
$$
\begin{tikzcd}[column sep={2em,between origins}]
  X  \ar[rr] \ar[rd] & \ar[d, phantom, pos=0.3, "\theta" 
  description] & S \ar[ld, "f"] \\
  & \P     &
\end{tikzcd}
$$
(which we may think of as a bundle over $S$).
A state is more data:
\begin{blanko}{States (vectors).}
  A {\em state} of $\ZZ_{/S}$ is a linear functor $\ZZ_{/\e} \to 
\ZZ_{/S}$, meaning that it is a span 
  \[
\begin{tikzcd}[row sep={2em,between origins}, column sep={3em,between origins}]
  & X \ar[ld] \ar[rd] &  \\
  1 \ar[rd, "\e"'] & \rho & S  , \ar[ld, "f"] \\
  & \P &
\end{tikzcd}
\]
(in other words, it's a bundle together with an orientation ($\rho^{-1}$)).
\end{blanko}

\begin{blanko}{Elementary states.}
  The {\em elementary states} of $\ZZ_{/S}$ are given, for each
  $s \in S$, by the names:
    \[
\begin{tikzcd}[row sep={2em,between origins}, column sep={3em,between origins}]
  & 1 \ar[ld] \ar[rd, "\name{s}"] &  \\
  1 \ar[rd] & \commutes & S \ar[ld] \\
  & \P &
\end{tikzcd}
\]
Whenever we refer to an elementary state by writing 
$1{\leftarrow}1{\stackrel{\name{s}}\to} S$
it is understood that it is the positive version, with trivial $2$-cell down to
$\P$.
\end{blanko}

In ordinary objective linear algebra, where there are no signs, one 
chooses for each connected component an elementary state, given by
$\name s : 1 \to S$. Within a connected component, any two choices
are equivalent (as states).

In the signed situation, things are more complicated, as
every connected component can count with either sign, and the ambiguity is
between working with a basis vector $\basis_s$ or $-\basis_s$. So if there are 
$k$ connected components in the parity groupoid $S \to \P$, then there are
$2^k$ possible bases. The immediate ambiguity comes from the fact that for
each $s\in S$, there are two version of the name:
$$
\begin{tikzcd}[column sep={2em,between origins}]
  1  \ar[rr, "\name{s}"] \ar[rd, "\e"'] & \ar[d, phantom, pos=0.3, 
  "\text{\footnotesize $+1$}" 
  description] & S \ar[ld, "f"] \\
  & \P  &
\end{tikzcd}
\qquad
\begin{tikzcd}[column sep={2em,between origins}]
  1  \ar[rr, "\name{s}"] \ar[rd, "\e"'] & \ar[d, phantom, pos=0.3, 
  "\text{\footnotesize $-1$}" 
  description] & S \ar[ld, "f"] \\
  & \P  .  &
\end{tikzcd}
$$
(Considered as states, these two a generally not equivalent, by 
Lemma~\ref{not-eq} below.)
Between these two states, one may argue that the even version is more canonical 
than the odd version. But within a single component of $S$ there may be 
odd arrows $s\to t$, and then although $s$ and $t$ 
belong to the same connected component, the corresponding elementary 
states are of opposite signs:

\begin{lemma}
  If $\sigma:s\isopil t$ is an odd arrow in $S$, then the states
  \[
  \begin{tikzcd}[row sep={2em,between origins}, column sep={4.8em,between origins}]
	& 1 \ar[ld] \ar[rd, "\name{s}"] &  \\
	1 \ar[rd] & (+1) & S   , \ar[ld] \\
	& \P &
  \end{tikzcd}
  \qquad
  \begin{tikzcd}[row sep={2em,between origins}, column sep={4.8em,between origins}]
	& 1 \ar[ld] \ar[rd, "\name{t}"] &  \\
	1 \ar[rd] & (-1) & S   , \ar[ld] \\
	& \P &
  \end{tikzcd}
  \]
  are equivalent.
\end{lemma}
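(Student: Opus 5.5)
We exhibit an explicit equivalence of $\P$-spans in the sense of \ref{morphism-of-spans}, whose underlying map of apices is the identity $u=\id_1$. Both spans have apex $1$, left leg $1\to 1$ and left $2$-cell trivial. They differ only in the right leg ($\name{s}$ versus $\name{t}$) and the right $2$-cell ($+1$ versus $-1$). Write the data as fully specified spans: $\epsilon'=\id$ and $\delta'=\id$ on the left, and on the right $\epsilon=(+1)$ with component $\e(\ast)\to f(s)$, and $\delta=(-1)$ with component $\e(\ast)\to f(t)$.

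For the morphism $(u,\theta,\sigma,\tau)$ we make the following choices.
\begin{itemize}
\item $u=\id_1$.
\item $\sigma$ is the identity $2$-cell on the left leg, since both left legs are the unique map $1\to1$.
\item $\tau:\name{s}\Rightarrow\name{t}\circ u$ is the natural transformation whose single component is the arrow $\sigma:s\isopil t$ in $S$. (We rename it to avoid the clash of notation with the $2$-cell $\sigma$ above.)
\item $\theta$ is forced by the left equation of \eqref{eq:two-eqs}. Since $\sigma$, $\epsilon'$ and $\delta'$ are all trivial, $\theta$ must be the trivial $2$-cell $\e\Rightarrow\e$, i.e.\ $+1$.
\end{itemize}

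The only thing left is to check the right-hand equation of \eqref{eq:two-eqs}, which is a single equation in the group $O(1)$ since all groupoids involved at the apex are the point. The left side is the composite of $\theta=+1$ with $\delta=-1$, giving $-1$. The right side is the composite of $\epsilon=+1$ with the image of $\tau$ under $f$, namely $f(\sigma)=\pari(\sigma)=-1$, since $\sigma$ is odd. Both sides equal $-1$, so the equation holds and we have a morphism of $\P$-spans. It is an equivalence because $u$ is an equivalence.

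The only point requiring care, and hence the main obstacle, is bookkeeping of orientation conventions. We must confirm that pasting $\tau$ with $\epsilon$ contributes $f(\sigma)$ rather than its inverse, and that $\theta$ is composed on the correct side. Since $O(1)$ is abelian and every element is its own inverse, any of these convention choices gives the same product. The verification is therefore robust.

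Alternatively, in the simplified-span viewpoint one observes that the ratio $\rho$ transforms by $f(\sigma)$ when the right leg is conjugated along $\sigma$. Flipping the sign of $\rho$ then compensates exactly for the odd parity of $\sigma$.
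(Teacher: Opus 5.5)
Your proposal is correct and is essentially the paper's proof. The paper exhibits the same equivalence (identity on the apex $1$, trivial $2$-cell on the left leg, and the odd arrow $\sigma\colon s\isopil t$ as the $2$-cell between $\name{s}$ and $\name{t}$) and calls the verification of \eqref{eq:two-eqs} routine; your reduction of the right-hand equation to $-1=-1$ in $O(1)$, with the remark that direction conventions cannot matter there, is exactly that routine check spelled out.
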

\begin{proof}
  An equivalence is given by
    \[
\begin{tikzcd}[row sep={2em,between origins}, column sep={4.8em,between origins}]
 & 1 \ar[dd] \ar[ld] \ar[rd, "\name{s}"] &  \\
1  &
\ar[l, phantom, pos=0.3, "\commutes" description]
\ar[r, phantom,  pos=0.3, "(\sigma)" description]
& S   . \\
& 1 \ar[lu] \ar[ru, "\name{t}"'] &
\end{tikzcd}
\]
It is routine to check the two equations in \eqref{eq:two-eqs}.
\end{proof}
\noindent
So in this case there is no way to say that $\name{s}$ is more canonical than 
$-\name{s}=\name{t}$.

\bigskip

The most striking instance of this is the case of an odd automorphism $\sigma: 
s\isopil s$:
\begin{cor}\label{plus=minus}
  Any odd automorphism $\sigma: s \isopil s$ induces an equivalence between
  the elementary state $1\leftarrow 1 \stackrel{\name{s}}\to S$ and its negative.
\end{cor}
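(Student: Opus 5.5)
This is the special case $t=s$ of the preceding Lemma, so the plan is to specialise that proof. That lemma, applied to an odd arrow $\sigma: s\isopil t$, gives an equivalence between the positive elementary state $1\leftarrow 1\stackrel{\name{s}}\to S$ (with $2$-cell $+1$) and the state $1\leftarrow 1\stackrel{\name{t}}\to S$ with $2$-cell $-1$. Setting $t=s$, the second state is the same span of groupoids as the first, with the opposite $2$-cell. By the definition in Section~\ref{sec:negative-span}, that is exactly the negative of the elementary state. So the corollary follows once we check that the proof of the lemma never used $s\neq t$.

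To make this explicit, write down the morphism of $\P$-spans $(u,\theta,\sigma_L,\tau)$ as in~\ref{morphism-of-spans}:
\begin{itemize}
\item $u=\id_1$, an equivalence;
\item the left $2$-cell, between the two copies of $1\to 1$, is the identity;
\item the right $2$-cell $\tau:\name{s}\Rightarrow\name{s}$ has its unique component equal to $\sigma$, now an automorphism of $s$;
\item $\theta$ is the trivial triangle on $1$, possibly twisted by a sign.
\end{itemize}

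Next, verify the two equations~\eqref{eq:two-eqs}, arrow by arrow in $\P$, since every $2$-cell involved has a single component. The left equation involves only identity $2$-cells on the left leg, so it forces $\theta=+1$. The right equation then reads
\[
(+1)\cdot f(\sigma) = (-1)\cdot(+1),
\]
comparing the $2$-cells $\epsilon=+1$ and $\delta=-1$ through $\tau$. This holds exactly because $f(\sigma)=\pari(\sigma)=-1$. Conversely, an even automorphism would make this check fail, which is consistent with the statement requiring $\sigma$ to be odd.

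The only real obstacle is bookkeeping: getting the orientation of the $2$-cell pasting in~\eqref{eq:two-eqs} right, so that the sign contributed by $\sigma$ lands on the side that compares $+1$ with $-1$. Since $\P=BO(1)$ is abelian and every arrow is its own inverse, the direction conventions (left versus right, $\sigma$ versus $\sigma^{-1}$) cannot change the outcome. So the check reduces to the single identity $\pari(\sigma)=-1$. Finally, Lemma~\ref{lem:equivalent-spans} upgrades this span equivalence to an equivalence of the associated linear functors.
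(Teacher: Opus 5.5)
Your proposal is correct and is exactly the paper's argument. The corollary is the case $t=s$ of the preceding lemma, whose equivalence is $u=\id_1$ with trivial left $2$-cell and right $2$-cell with component $\sigma$; your explicit check of the two equations in \eqref{eq:two-eqs} (forcing $\theta=+1$ and reducing to $\pari(\sigma)=-1$) is just the `routine' verification the paper leaves unwritten.
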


This is a first hint at important cancellations taking place in connection with 
odd automorphisms.

Note also:
\begin{lemma}\label{not-eq}
  If $s\in S$ has no odd automorphisms, then the elementary state
  $1\leftarrow 1 \stackrel{\name{s}}\to S$ is not equivalent to its 
  negative.
\end{lemma}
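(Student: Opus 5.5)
We argue by contradiction: suppose an equivalence of states $(u,\theta,\sigma,\tau)$ exists from the elementary state $\widehat{\name{s}}$, with trivial $2$-cell, to its negative $\overline{\name{s}}$, with $2$-cell $-1$, and extract an odd automorphism of $s$. By the definition of morphism of $\P$-spans in \ref{morphism-of-spans}, such an equivalence consists of $u\colon 1\to 1$ (necessarily the identity) together with:
\begin{itemize}
\item a $2$-cell $\sigma$ between the two legs to the domain $1$, which is trivial since the target is the terminal groupoid;
\item a $2$-cell $\tau\colon \name{s}\Rightarrow \name{s}$, i.e.\ an automorphism $a$ of $s$ in $S$;
\item a $2$-cell $\theta$, i.e.\ an element $\theta\in O(1)$ (the component at the single point), filling the triangle between the parity structures $\e$ on the two apexes.
\end{itemize}
These data must satisfy the two equations \eqref{eq:two-eqs}.

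The key step is to read these two equations as equations in the group $O(1)$, all $2$-cells being arrows in $\P$ at the unique point. We may use the fully specified form with $\epsilon'$ trivial on both sides, as allowed by the lemma on simplification.
\begin{itemize}
\item The left-hand equation involves $\theta$, $\delta'=+1$, $\sigma=\id$ and $\epsilon'=+1$. It forces $\theta=+1$.
\item The right-hand equation equates the pasting of $\theta$ with $\delta=-1$ against the pasting of $\tau$, whimpered by $f$, with $\epsilon=+1$. It gives
$$
\theta\cdot(-1)=f(a)\cdot(+1).
$$
\end{itemize}
Hence $f(a)=\pari(a)=-1$, so $a$ is an odd automorphism of $s$, contradicting the hypothesis. (This is exactly the converse of the construction in Corollary~\ref{plus=minus}, where an odd automorphism served as $\tau$.)

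The main obstacle is bookkeeping: making sure the directions and orientations of the $2$-cells in \eqref{eq:two-eqs} are pasted correctly, so that the sign relation really reads $\pari(a)=-1$ rather than something vacuous. Since $O(1)$ is abelian and every element is its own inverse, direction conventions cannot change the outcome; only the count of $-1$ factors matters, and the pasting contains exactly one from $\delta$.

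A cleaner alternative is available as a sanity check. Postcompose both states with any orientation-type costate available near $s$, or restrict to the component of $s$, which is orientable by Lemma~\ref{lemma:2^k orientations}, to obtain the scalars $\pm 1$. These are inequivalent by the sign-splitting lemma, since $S_\pp\neq\emptyset$ for one and not the other. Making this rigorous requires functoriality of composition with respect to equivalences, which is why we prefer the direct computation above.
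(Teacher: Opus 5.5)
Your argument is correct: with representatives chosen so that $\epsilon'=\delta'=+1$, the left-hand equation of \eqref{eq:two-eqs} forces $\theta=+1$, and the right-hand one then forces $\pari(\tau)=-1$, so any equivalence would make $\tau$ an odd automorphism of $s$. The paper states this lemma without proof, but your computation is exactly the converse of the routine check in the proof of the preceding lemma (where an odd arrow serves as the $2$-cell $\tau$), so it is presumably the intended argument.
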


\subsection{Group actions on $\P$-spans, states, and scalars}

We briefly point out some subtleties
regarding groups action on $\P$-spans, states and scalars, following up on
\ref{sub:group-act}.

\begin{blanko}{Actions on oriented parity structures.}
    For a $G$-action 
      $$
  \begin{tikzcd}[column sep={2em,between origins}]
    X  \ar[rr, ".g"] \ar[rd] &
    \ar[d, phantom, pos=0.3, "\theta_g" description]
    & X \ar[ld] \\
    & \P  &
  \end{tikzcd}
  $$
  to be compatible with an orientation
  \[
  \begin{tikzcd}[row sep={2em,between origins}, column sep={4em,between origins}]
   & X \ar[dd] \ar[ld]
   \\
  1 \ar[rd]& \ar[l, phantom, pos=0.3, "\omega" description]
  \\
  & \P
  \end{tikzcd}
  \]
  is to have the equation
  $$
  (\theta_g)_x \cdot \omega_{x.g} = \omega_x
  $$
  for each $x\in X $ and $g\in G$, as in the left-hand equation of \eqref{eq:two-eqs}. 
  This shows that for an orientation-preserving
  action, the $2$-cell data is forced: to give an orientation-preserving action, it is
  enough to give an action on the underlying groupoid $X$, then there is a unique
  $2$-cell making it into an action of oriented parity structures. In particular, if
  $X$ has the pure even orientation, then the $2$-cell data is trivial (that is,
  $\theta_g = \id$).
\end{blanko}

\begin{blanko}{Actions on $\P$-spans.}\label{act-on-spans}
  For $G$ a finite group, a $G$-action on a $\P$-span
\begin{equation}\label{eq:Gactspan}
\begin{tikzcd}[row sep={2em,between origins}, column sep={4.8em,between origins}]
	& X \ar[ld] \ar[dd] \ar[rd] &  \\
	S \ar[rd] &
    \ar[l, phantom, pos=0.3, "\commutes" description]
   \ar[r, phantom, pos=0.3, "\rho" description]
 & T \ar[ld] \\
	& \P  ,  &
  \end{tikzcd}
\end{equation}
is a functor from $BG$ to the category of $\P$-spans from $S$ to $T$ with value the 
given span. Unpacking the definition of morphism of $\P$-spans \ref{morphism-of-spans} we see that it amounts
to the data
    $$
  \begin{tikzcd}[column sep={2.5em,between origins}]
  X \ar[rd] \ar[rr, ".g"] & \ar[d, phantom, pos=0.25, 
  "\theta_g" description] & X \ar[ld]  \\
   & \P &
  \end{tikzcd}
\qquad    \qquad
  \begin{tikzcd}[row sep={2.3em,between origins}, column sep={4.5em,between origins}]
    &X \ar[dd, ".g"' description]\ar[ld]\ar[rd]& \\
   S  &
   \ar[l, phantom, pos=0.3, "\sigma_g" description]
   \ar[r, phantom, pos=0.3, "\tau_g" description]
   & T   \\
 & X \ar[lu]\ar[ru] & 
  \end{tikzcd}
  $$
  for each $g\in G$, compatible with the group law in $G$, and it should satisfy the 
  two conditions
  $$
      \begin{tikzcd}[column sep={4em,between origins}, row sep={1.1em,between origins}]
  &  M \ar[ld, pos=0.40, ".g"'] \ar[dddd]
  \\
   N \ar[dd]  \ar[rddd] & 
  \\[3pt]
  &
  \ar[ld, phantom, pos=0.81, "\commutes"]
  \ar[lu, phantom, pos=0.35, "\theta_g"]
  \\[3pt]
   S \ar[rd] &
  \\
  & \P 
  \end{tikzcd}
  =
  \begin{tikzcd}[column sep={4em,between origins}, row sep={1.1em,between origins}]
   & X \ar[ld, pos=0.40, ".g"'] \ar[dddd]  \ar[lddd]
  \\
   X \ar[dd] &
  \\[3pt]
  &
  \ar[lu, phantom, pos=0.85, "\sigma_g"] 
  \ar[ld, phantom, pos=0.35, "\commutes"]
  \\[3pt]
   S \ar[rd] &
  \\
  & \P
  \end{tikzcd}
  \qquad\qquad
  \begin{tikzcd}[column sep={4em,between origins}, row sep={1.1em,between origins}]
  X \ar[rd, pos=0.40, ".g"] \ar[dddd] & 
  \\
  & X \ar[dd]  \ar[lddd] 
  \\[3pt]
  \ar[ru, phantom, pos=0.35, "\theta_g" description] 
  \ar[rd, phantom, pos=0.81, "\epsilon" ] & 
  \\[3pt]
  & T \ar[ld]
  \\
  \P &
  \end{tikzcd}
  =
  \begin{tikzcd}[column sep={4em,between origins}, row sep={1.1em,between origins}]
  X \ar[rd, pos=0.40, ".g"] \ar[dddd]  \ar[rddd] & 
  \\
  & X \ar[dd] 
  \\[3pt]
  \ar[ru, phantom, pos=0.89, "\tau_g" description] 
  \ar[rd, phantom, pos=0.35, "\epsilon" description] 
  & 
  \\[3pt]
  & T \ar[ld]
  \\
  \P &
  \end{tikzcd}
$$
The left-hand equation gives 
$\theta_g = \pari(\sigma_g)$ (independently of $x$),
and with this the right-hand equation becomes
\begin{equation}\label{eq:span-act}
\pari((\sigma_g)_x) \cdot \rho_{x.g} = \rho_x \cdot \pari((\tau_g)_x) .
\end{equation}

The (weak) quotient of the $G$-action is
$$
\begin{tikzcd}[row sep={2em,between origins}, column sep={4.8em,between origins}]
	& X/G \ar[ld] \ar[dd] \ar[rd] &  \\
	S \ar[rd] &
    \ar[l, phantom, pos=0.3, "\commutes" description]
   \ar[r, phantom, pos=0.3, "\rho" description]
 & T \ar[ld] \\
	& \P  ,  &
  \end{tikzcd}
$$
It is calculated as the quotient $X/G$ in the category of groupoids, with the new legs
induced by the universal property. The $2$-cell structure remains the same pointwise: naturality
also with respect to the `new' arrows in $X/G$ follows from \eqref{eq:span-act}.

One should be aware that many quotients that exist in the category of groupoids do not 
extend to $\P$-spans because the action itself is not an action of $\P$-spans. Some examples 
are given below.
\end{blanko}

\begin{blanko}{Group actions on states.}\label{act-on-states}
  In the special case where the $\P$-span \eqref{eq:Gactspan}
  is a state, meaning that we have $S=1$,
  then necessarily $\sigma_g=\id$ (for all $g\in G$), and therefore also
  $\theta_g = \id$, and Equation~\eqref{eq:span-act} simplifies to
  \begin{equation}\label{eq:state-act}
  \rho_{x.g} = \rho_x \cdot \pari((\tau_g)_x) .
  \end{equation}
  This means that to give an action on a state $1\leftarrow X \to T$ is the same
  as giving an action on $X$ in $\ZZ_{/T}$ but required to have trivial $2$-cell
  data $\theta_g$.
\end{blanko}

\begin{blanko}{Group actions on scalars.}\label{act-on-scalars}
  Specialising to the case of a scalar $1\leftarrow S \to 1$,
  also $\tau_g$ becomes trivial, so we are left with just an action on
  the groupoid $S$, but it should still satisfy the condition, which now 
  boils down to
  $$
  \epsilon_{x.g} = \epsilon_x ,
  $$
  which is to say that the action must preserve the sign splitting 
  (as we already knew from
  Proposition~\ref{morphism-of-scalars}). So the group action splits into
  one group action on $S_\pp$ and one group action on $S_\mm$.
  
  The quotient for such an action is simply the scalar
  $$
  (S_\pp/G, S_\mm/G) .
  $$
\end{blanko}

The next few simple examples illustrate some of the subtleties with group actions
on states and scalars.
\begin{blanko}{Example.}\label{not-a-scalar-action}
  For a surjective group homomorphisms $f: G \to O(1)$ with kernel $K$
  and a group element $a\notin K$,
  consider the scalar 
    \begin{equation}
  \label{}
  \begin{tikzcd}[row sep={2em,between origins}, column sep={4.8em,between origins}]
	& \underline G \ar[ld] \ar[rd] &  \\
	1 \ar[rd] & \rho & 1 \ar[ld] \\
	& \P  ,  &
  \end{tikzcd}
  \end{equation}
  where $\rho_x$ is even for $x\in K$ and odd for $x \in K a$ (the nontrivial coset of
  $K$). So as a scalar, the set $\underline G$ has the sign splitting $(\underline 
  G_\pp, \underline G_\mm)
  = (K, K a)$. Now $G$ acts on the set $\underline G$ by right multiplication, but the
  odd group elements will interchange the positive and the negative. So it is not an
  action of scalars.
  
  Similarly, the quotient groupoid $\underline G/G=1$, although it is a scalar, is {\em not}
  the quotient as a scalar -- it does not even receive any morphism of scalars from 
  the original scalar.
\end{blanko}

\begin{blanko}{Example.}\label{non-action2}
  The state $1 \leftarrow 1 \to BG$ only admits one $G$-action (of states), namely the 
  trivial action with trivial $2$-cells. This follows from \ref{act-on-states}.
  Trying to define a non-trivial action on states by
    $$
  \begin{tikzcd}[column sep={2.5em,between origins}]
  1 \ar[rd] \ar[rr, ".g"] & \ar[d, phantom, pos=0.25, 
  "\commutes" description] & 1 \ar[ld]  \\
   & \P &
  \end{tikzcd}
\qquad    \qquad
  \begin{tikzcd}[row sep={2.1em,between origins}, column sep={4em,between origins}]
    &1 \ar[dd, ".g"']\ar[ld]\ar[rd]& \\
   1  &\ar[r, phantom, pos=0.3, "g" description]& BG   \\
 & 1 \ar[lu]\ar[ru] & 
  \end{tikzcd}
  $$
  does not work if $G$ has odd elements: while the left-hand equation of \eqref{eq:two-eqs} does hold,
  the right-hand equation does not (and it is not possible to modify the commutative 
  triangle on the left either, as that would break the left-hand equation).
\end{blanko}

\begin{blanko}{Example.}\label{false+false=true}
  In continuation of the previous two examples,
   consider the composite $\P$-span
     \[
  \begin{tikzcd}[row sep={2em,between origins}, column sep={3em,between origins}]
    && \underline G \times 1\ar[ld] \ar[rd] &&
    \\
    & \underline G \ar[ld] \ar[rd] & \commutes & 1 \ar[ld] \ar[rd]
    \\
    1 & \rho & 1  & \commutes & BG  
  \end{tikzcd} 
  \quad = \quad
  \begin{tikzcd}[row sep={2.7em,between origins}, column sep={4.0em,between origins}]
  & \underline G \ar[ld] \ar[rd] &   \\
  1 & \rho & BG
  \end{tikzcd}
  \]
  (Recall that $\rho_x$ is the parity of the group element $x\in\underline G$.) The
  two individual $\P$-spans each has its false $G$-action, as we have just seen, but
  the two faults cancel out so as to define a {\em true} action on the composite
  $\P$-span, namely the diagonal action
  $
  (x,1) \mapsto (x.g, 1.g) =(x.g,1)
  $, which is just
  $$
  x\mapsto x.g
  $$
  equipped with $2$-cells
  $$
  \begin{tikzcd}[column sep={2.5em,between origins}]
  \underline G \ar[rd] \ar[rr, "x{\mapsto}x.g"] & \ar[d, phantom, pos=0.25, 
  "\commutes" description] & \underline G \ar[ld]  \\
   & \P &
  \end{tikzcd}
\qquad    \qquad
  \begin{tikzcd}[row sep={2.1em,between origins}, column sep={4em,between origins}]
    &\underline G \ar[dd, ".g"']\ar[ld]\ar[rd]& \\
   1  &\ar[r, phantom, pos=0.3, "g" description]& BG   \\
 & \underline G \ar[lu]\ar[ru] & 
  \end{tikzcd}  
  $$
  This actually works: the required equation is (for every $x\in\underline G$ and 
  $g\in G$):
  $$
  \rho_{x.g} = \rho_x \cdot \pari(g) ,
  $$
  which clearly holds. Compared to the non-example \ref{not-a-scalar-action}, the fact
  that the span ends in $BG$ instead of ending in $1$ gives sufficient elbow room for
  the action to work out correctly.
\end{blanko}

The composition of the example is an instance of scalar multiplication, which we 
treat next:

\subsection{Scalar multiplication, inner product, and projections}
\label{sub:inner}

\begin{blanko}{Scalar multiplication.}
  As in any monoidal category, the scalars act on the states: given a scalar 
  $\widehat S$ and a state $\widehat X$
  \[
  \begin{tikzcd}[row sep={2em,between origins}, column sep={4.8em,between origins}]
	& S \ar[ld] \ar[rd] &  \\
	1 \ar[rd] & \sigma & 1  \ar[ld] \\
	& \P &
  \end{tikzcd}
  \qquad
  \begin{tikzcd}[row sep={2em,between origins}, column sep={4.8em,between origins}]
	& X \ar[ld] \ar[rd] &  \\
	1 \ar[rd] & \epsilon & J   , \ar[ld] \\
	& \P &
  \end{tikzcd}
  \]
  we have {\em scalar multiplication}
$$
\widehat S \cdot \widehat X
$$
defined either as the tensor product 
$S*X$, or as composition of spans, given as, respectively
  \[
\begin{tikzcd}[row sep={2.8em,between origins}, column sep={3.9em,between origins}]
  & S\times X \ar[ld] \ar[rd] & \\
  1\times 1 \ar[rd] & \sigma\times\epsilon & 1\times J \ar[ld]  \\
  & \P\times\P \ar[d, "\cdot"] & \\
  & \P &
\end{tikzcd}
\qquad = \qquad
\begin{tikzcd}[row sep={2.8em,between origins}, column sep={3.6em,between origins}]
  && S\times X \ar[ld] \ar[rd] && \\
  & S \ar[ld] \ar[rd] & \commutes & X \ar[ld] \ar[rd] &  \\
  1 
  \ar[rrd]
  & \phantom{xx}\sigma&1 
  \ar[d]
  &\epsilon\phantom{xx}& J
  \ar[lld]
  \\
  && \P  &&
\end{tikzcd}
\]
\end{blanko}

\begin{blanko}{Inner product.}
  Since our `vector spaces' $\ZZ_{/J}$ are essentially already 
coordinatised (not quite, because of the ambiguity of signs), 
they come equipped with inner products, or equivalently, a duality:
the {\em dual costate} to a state $1{\leftarrow}X{\to}J$ is simply the 
transposed span $J{\leftarrow}X{\to}1$ (with inverted $2$-cell, in the case of 
simplified spans). 

The (canonical) {\em inner product} $\innerprod{\widehat 
X}{\widehat Y}$
of two states
  \[
  \begin{tikzcd}[row sep={2em,between origins}, column sep={3.2em,between origins}]
	& X \ar[ld] \ar[rd] &  \\
	1  & \rho & J
  \end{tikzcd}
  \qquad
  \begin{tikzcd}[row sep={2em,between origins}, column sep={3.2em,between origins}]
	& Y \ar[ld] \ar[rd] &  \\
	1  & \sigma & J
  \end{tikzcd}
  \]
is by definition the scalar given by span composition (matrix multiplication)
of $\widehat X$ with the dual of $\widehat Y$:
\[
\innerprod{\widehat X}{\widehat Y} := \qquad
\begin{tikzcd}[row sep={2.4em,between origins}, column sep={3.8em,between origins}]
  && X \times_J Y \ar[ld] \ar[rd] && \\
  & X \ar[ld] \ar[rd] & \pi & Y \ar[ld] \ar[rd] & \\
  1 
  & \rho&J 
  &\sigma^{-1}& 1
\end{tikzcd}
\]

\end{blanko}

\begin{blanko}{Projection onto an elementary state.}
  In an inner-product space in ordinary linear algebra, the orthogonal projection 
  of a vector $\mathbf{x}$ onto a basis vector $\basis_j$ is given by the formula
  \begin{equation}\label{vect-proj}
  \operatorname{proj}_{\basis_j}(\mathbf{x}) = 
  \frac{\innerprod{\mathbf{x}}{\basis_j}}{\innerprod{\basis_j}{\basis_j}}
  \basis_j .
  \end{equation}
  This has a nice interpretation in homotopy linear algebra: for the 
  projection of the state $1 \leftarrow X \to J$ (for short, $\widehat X$)
  onto the elementary state $1 \leftarrow  1 \stackrel{\name j}\to J$, 
  we have
  \begin{equation}\label{wrong-act}
  \operatorname{proj}_j(\widehat X) = \frac{X_j}{j!} \cdot \name{j}  .
  \end{equation}
  
  This works with signs too, although the formula has to change slightly.
  In detail, the inputs are the $\P$-spans
    \[
  \begin{tikzcd}[row sep={2em,between origins}, column sep={3.2em,between origins}]
	& X \ar[ld] \ar[rd, "q"] &  \\
	1  & \rho & J
  \end{tikzcd}
  \qquad
  \begin{tikzcd}[row sep={2em,between origins}, column sep={3.2em,between origins}]
	& 1 \ar[ld] \ar[rd, "\name{j}"] &  \\
	1  & \commutes & J
  \end{tikzcd}
  \]
  This second state  we denote by $\widehat U$ (which makes sense if we put $U=1$).
  By definition of the inner product we have
  \begin{equation}\label{rho-pi}
    \innerprod{\widehat X}{\widehat U} \quad = \quad
  \begin{tikzcd}[row sep={2.4em,between origins}, column sep={3.8em,between origins}]
  && X\times_J U \ar[ld] \ar[rd] && \\
  & X \ar[ld] \ar[rd, "q"'] & \pi & U \ar[ld, "\name{j}"] \ar[rd] & \\
  1 
  & \rho &J 
  && 1
\end{tikzcd}
\end{equation}
  whose apex is just
  the fibre $X_j$,  and we also have
  $$
      \innerprod{\widehat U}{\widehat U} \quad = \quad
  \begin{tikzcd}[row sep={2.4em,between origins}, column sep={3.8em,between origins}]
  && U \times_J U \ar[ld] \ar[rd] && \\
  & U \ar[ld] \ar[rd, "\name{j}"'] & \pi & U \ar[ld, "\name{j}"] \ar[rd] & \\
  1 
  & &J 
  && 1
\end{tikzcd}
$$
whose apex is the loop space at $j \in J$, namely $\Omega B 
\Aut(j)$.
The idea is now that the division should be interpreted as the quotient by the canonical 
action of $j! = \Aut(j)$ on $X_j$ as in \eqref{wrong-act}. However, in the signed 
situation this does not quite work, because 
while $j!$ does act on the {\em groupoid} $X_j$,
it is not true in general that it acts on the {\em scalar} defined by $X_j$.
An example of this situation was given in \ref{not-a-scalar-action-2}.
However, it {\em is} true that $j!$ acts on the whole state $\widehat X_j \cdot \widehat U$,
which is given by
\begin{equation}\label{integrand/P}
\begin{tikzcd}[row sep={2.4em,between origins}, column sep={3.8em,between origins}]
  && X_j \ar[ld] \ar[rd] && \\
  & X_j \ar[ld] \ar[rd] &  & 1 \ar[ld] \ar[rd] & \\
  1 
  & \rho \cdot \pari(\pi)& 1
  &\commutes& J
\end{tikzcd}
\quad
= \quad
  \begin{tikzcd}[row sep={3em,between origins}, column sep={4.2em,between origins}]
   & X_j \ar[ld] \ar[rd] &  \\
  1 & \rho \cdot \pari(\pi) & J
  \end{tikzcd}
\end{equation}
The $2$-cell $\rho\cdot \pari(\pi)$ here is obtained by pasting $\rho$ and $\pi$ in 
the composed diagram \eqref{rho-pi}. Explicitly, recalling that the objects of $X_j$ are pairs
$(x,a)$
with $x\in X$ and $a:qx\isopil j$ in $J$, the component of $\rho\cdot \pari(\pi)$
on $(x,a)\in X_j$ is given by $\rho_x \cdot \pari(a)$.
The action is given by (for each $g\in j!$):
$$
  \begin{tikzcd}[column sep={2.5em,between origins}]
  X_j \ar[rd] \ar[rr, ".g"] & \ar[d, phantom, pos=0.25, 
  "\commutes" description] & X_j \ar[ld]  \\
   & \P &
  \end{tikzcd}
\qquad    \qquad
  \begin{tikzcd}[row sep={2.1em,between origins}, column sep={4em,between origins}]
    X_j \ar[dd, ".g"']\ar[rd]& \\
  \ar[r, phantom, pos=0.3, "\tau_g" description]& BG   \\
 X_j \ar[ru] & 
  \end{tikzcd}
$$ 
as in \ref{act-on-spans}--\ref{act-on-states}. The $2$-cell $\tau_g$ has component $g$
on every point $(x,a)$. At the groupoid level, the action is
$$
(x,a) \mapsto (x, a.g) .
$$
Equation~\eqref{eq:state-act} now reads
$$
\big( \rho\cdot \pari(\pi)\big)_{(x,a).g} = \big(\rho\cdot \pari(\pi)\big)_{(x,a)} 
\cdot \pari((\tau_g)_{(x,a)})
$$
which in turn unpacks to
$$
\rho_x \cdot \pari(a.g) = \rho_x\cdot\pari(a) \cdot \pari(g)
$$
which clearly holds.

So in the signed case,
the correct formula\footnote{We would like to suggest that all linear-algebra books 
in the universe be revised so as to write  $ 
  \frac{\innerprod{\mathbf{x}}{\basis_j}\cdot  \basis_j}{\innerprod{\basis_j}{\basis_j}}
 $ instead of $\frac{\innerprod{\mathbf{x}}{\basis_j}}{\innerprod{\basis_j}{\basis_j}}
  \basis_j$, but we are not 
completely sure whom to suggest it to.} for the projection is therefore
$$
\operatorname{proj}_j(\widehat X) =
\frac{X_j\cdot \name{j}}{j!} .
$$
\end{blanko}

\begin{blanko}{Example.}\label{not-a-scalar-action-2}
  For a surjective group homomorphism $f:G \to O(1)$, consider the state 
  \begin{equation}
  \label{}
  \begin{tikzcd}[row sep={2em,between origins}, column sep={4.8em,between origins}]
	& 1 \ar[ld] \ar[rd] &  \\
	1  & \commutes & BG 
  \end{tikzcd}
  \end{equation}
  Now the scalar given by the fibre is the scalar
  \begin{equation}
	\begin{tikzcd}[row sep={2em,between origins}, column sep={4.8em,between origins}]
	  && \underline G \ar[ld] \ar[rd] & \\
	  & 1 \ar[ld] \ar[rd] & \pi & 1 \ar[ld] \\
	  1 \ar[rd, "\e"'] & \commutes & BG \ar[ld, "f"] & \\
	  & \P & &
	\end{tikzcd}
  \end{equation}
  from Example~\ref{not-a-scalar-action}, and the corresponding elementary state
  is that of Example~\ref{non-action2}. The group action on the composed span is that of 
  Example~\ref{false+false=true}.
\end{blanko}

\begin{prop}
  Any state is the sum of its projections onto the elements of a basis. More 
  precisely:
  $$
1{\leftarrow}X{\to}J \ 
\simeq \ \sum_{j\in\pi_0(J)}  \frac{(1{\leftarrow}X_j{\to}1) \cdot 
(1{\leftarrow}1{\stackrel{\name{j}}\to}J)}{j!},
$$
or more conceptually:

  \[
  \begin{tikzcd}
   & X \ar[ld] \ar[rd] &  \\
  1 & \rho & J
  \end{tikzcd}
  \qquad 
  \simeq
  \qquad
  \int_{j\in J} \left(
  \begin{tikzcd}[row sep={1.8em,between origins}, column sep={2.8em,between origins}]
  && X_j \ar[ld] \ar[rd] && \\
  & X \ar[ld] \ar[rd] & \pi & 1 \ar[ld] \ar[rd] & \\
  1 
  & \rho&J 
  &\commutes& J
\end{tikzcd} \right)
\]
in the category of states of $J$. Or more compactly written:
$$
\widehat X \simeq \int_{j\in J} \name{j}\lowershriek \name{j}\upperstar (\widehat X) .
$$
\end{prop}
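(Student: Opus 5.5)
The plan is to reduce the statement to the unsigned decomposition \eqref{int-of-fib} of a map $X\to J$ as the homotopy sum of its fibres, and then check that the parity and $2$-cell data are carried along. Concretely, the integrand $\name{j}\lowershriek\name{j}\upperstar(\widehat X)$ is the state \eqref{integrand/P}, namely $1\leftarrow X_j\to J$ with $2$-cell $(x,a)\mapsto \rho_x\cdot\pari(a)$. These integrands assemble into a functor $J\to \kat{States}(J)$. On objects it is this state, and an arrow $b:j\to j'$ acts by $(x,a)\mapsto(x,ba)$, with $\tau$-component $b$ into $J$. In particular the restriction to $j!$ is exactly the action of states verified in the discussion of projections, where \eqref{eq:state-act} reduces to $\rho_x\pari(ba)=\rho_x\pari(a)\pari(b)$. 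So the first step is simply to record that the integrand is a genuine diagram of states, not merely of groupoids.

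Next I compute the homotopy colimit in the category of states. As in \ref{act-on-spans}, the colimit of a diagram of $\P$-spans is computed on apexes in groupoids, with the legs and the pointwise $2$-cell induced. Splitting over $\pi_0 J$ gives $\sum_{j\in\pi_0 J} X_j/j!$. The apex is then the Grothendieck construction $\int_{j\in J}X_j$, whose objects are triples $(x,j,a:qx\isopil j)$. The canonical projection $(x,j,a)\mapsto x$ is an equivalence onto $X$, because the space of pairs $(j,a)$ with $a:qx\isopil j$ is contractible; this is exactly \eqref{int-of-fib} in the unsigned case. Under this projection, the leg to $J$ sends $(x,j,a)$ to $j$, and it is identified with $q$ via the natural isomorphism with components $a$.

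The remaining step, which I expect to be the only real point, is to upgrade this groupoid equivalence to an equivalence of states in the sense of \ref{morphism-of-spans}. I take $u$ to be the projection and $\sigma$ trivial, since the left leg goes to $1$. For $\tau$ I take the $2$-cell with component $a^{-1}$ (or $a$, depending on direction) relating $q\circ u$ to the leg $(x,j,a)\mapsto j$. The left equation of \eqref{eq:two-eqs} then forces $\theta=\id$. The right equation reads $\rho_x\cdot\pari(a)=\rho_{x}\cdot\pari(a)$, which holds: the $2$-cell on the colimit at $(x,j,a)$ is $\rho_x\pari(a)$, and transporting $\rho_x$ along $\tau$ contributes exactly $\pari(a)$. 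One must also check naturality in the new arrows of the colimit, which is again \eqref{eq:state-act}. Finally, the discrete form $\sum_{j\in\pi_0 J}(X_j\cdot\name j)/j!$ follows by choosing one representative per component; the quotients are quotients of states, not of scalars, which is why the formula must be written with the product inside the quotient.
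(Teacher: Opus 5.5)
Your proposal is correct and follows essentially the same route as the paper: verify that the natural action on the integrand $\name{j}\lowershriek\name{j}\upperstar(\widehat X)$ is an action of states, compute the homotopy sum on underlying groupoids, and identify it with $\widehat X$ via the classical equivalence $X\simeq\sum_{j\in\pi_0 J}X_j/j!$. Your explicit comparison morphism $(u,\theta,\sigma,\tau)$, with $\tau$-components $a$ and the check $\rho_x\cdot\pari(a)=\rho_x\cdot\pari(a)$, spells out what the paper compresses into the phrase that the identification ``can be measured at the level of groupoids''.
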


\begin{proof}
  We have already done most of the work, namely to establish that the natural action 
  on the integrand is an action of states. The rest can be checked at the level of 
  groupoids: the homotopy sum splits into a discrete
  sum over $\pi_0 J$ and then within each component a weak quotient of a $j!$-action,
  which at the groupoid level is the canonical
  action of $j!$ on $X_j$. Since the action exists at the level of states, also the
  quotient is a state, and that this quotient is $\widehat X$ can be measured at the 
  level of groupoids, where it is the well-known equivalence
  $X \simeq \sum_{j\in \pi_0 J} X_j/j!$.
\end{proof}

\subsection{Span composition in `coordinates'}

\begin{blanko}{Composition in `coordinates'.}
  To understand how pullback composition of $\P$-spans
  $$
  \begin{tikzcd}[row sep={2em,between origins}, column sep={3.6em,between origins}]
    && X\times_J Y \ar[ld] \ar[rd]&& \\
   & X \ar[ld] \ar[rd]&\pi& Y \ar[ld] \ar[rd]&  \\
  I &\rho & J & \rho'& K
  \end{tikzcd}
  $$
  corresponds to `matrix multiplication', we need the formula
  $$
  {}_i(X\times_J Y)_k \simeq \int_{j\in J} ( {}_i X_j \times {}_j Y_k) .
  $$
  From homotopy linear algebra without signs, we already have this equivalence at the
  level of groupoids, but we need the equivalence at the level of scalars,
  to ensure that the
  signs are preserved under it. This may look a bit worrying, since the composed span
  involves signs coming from three $2$-cells: $\rho$, $\rho'$ and $\pi$, whereas the
  two small spans only have signs coming from $\rho$ and $\rho'$, respectively.
  Nevertheless, it works out correctly. The formula will be a corollary of the
  following more general result with fewer indices.
\end{blanko}

\begin{prop}\label{prop:MjN}
   The equivalence of groupoids
   $$
   X\times_J Y \simeq \int_{j\in J} ( X_j \times {}_j Y) 
   $$
   is also an equivalence of $J$-spans from $X$ to $Y$.
\end{prop}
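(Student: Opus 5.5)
The plan follows the proof of the projection proposition in \ref{sub:inner}: first exhibit the natural $J$-indexed family, here a $j!$-action for each $j$, at the level of $\P$-spans rather than just groupoids; then check the equivalence on underlying groupoids, where it is already known from homotopy linear algebra without signs.

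\emph{Setting up the integrand.} For $j\in J$, write $X_j$ for the fibre of $p_X: X\to J$ and ${}_jY$ for the fibre of $p_Y: Y\to J$. The objects are pairs $(x,a)$ with $a:p_X x\isopil j$, and $(y,b)$ with $b: j\isopil p_Y y$. I will regard $X_j\times{}_jY$ as the apex of a $\P$-span from $X$ to $Y$, with legs the two projections. Its $2$-cell is obtained by pasting as in \eqref{integrand/P}: on $(x,a,y,b)$ the component is $\pari(b\circ a)$. Here I take $\rho$ and $\rho'$ to be the $2$-cells of $X$ and $Y$ viewed as $J$-spans, i.e.\ over $J$, and the $2$-cell down to $\P$ is induced by the parity structure of $J$. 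This is the same formula as for the component of $\pari(\pi)$ on the corresponding point of $X\times_JY$, whose objects are triples $(x,c,y)$ with $c:p_Xx\isopil p_Yy$.

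\emph{The action.} The group $j!$ acts on the groupoid by $(x,a,y,b)\mapsto(x,g^{-1}a,y,bg)$. I will take $\sigma_g$ and $\tau_g$ to be identities, since the legs to $X$ and $Y$ are strictly invariant, so by \ref{act-on-spans} we get $\theta_g=\id$. Equation~\eqref{eq:span-act} then reduces to invariance of the $2$-cell under the action, and this holds because $bg\,g^{-1}a=ba$. Thus the parity cancels between the two factors, just as the two false actions cancel in Example~\ref{false+false=true}. This step is where I expect the only real care to be needed: one must use the composite $b\circ a$ rather than the separate parities of $a$ and $b$, which would not be invariant. Letting $j$ vary along arrows of $J$ gives the functor $J\to$ ($\P$-spans $X\to Y$); its colimit is $\int_{j\in J}(X_j\times{}_jY)$, computed componentwise as $\sum_{j\in\pi_0J}(X_j\times {}_jY)/j!$ with the pointwise $2$-cell described in \ref{act-on-spans}.

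\emph{Comparison.} Define $u:\int_j(X_j\times{}_jY)\to X\times_JY$ by $(x,a,y,b)\mapsto(x,ba,y)$. This is invariant under the action, so it descends to the quotient, and it is the known equivalence of groupoids. To promote it to an equivalence of $\P$-spans, I take the $2$-cells $\sigma,\tau$ in \ref{morphism-of-spans} to be identities, since $u$ commutes strictly with the legs. Both sides carry the same $2$-cell $\pari(ba)=\pari(c)$, and $u$ preserves parities on old arrows and sends new arrows to identities, so $\theta=\id$ works and both equations in \eqref{eq:two-eqs} hold trivially. Hence $u$ is an equivalence of $J$-spans, and Lemma~\ref{lem:equivalent-spans} yields the equivalence of the corresponding functors.
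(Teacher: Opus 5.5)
Your proposal is correct and follows the paper's proof. The paper likewise uses the diagonal $j!$-action $(x,s,y,t)\mapsto(x,sg,y,g^{-1}t)$ on $X_j\times{}_jY$ with all $2$-cell data $\theta_g,\sigma_g,\tau_g$ trivial, and reduces everything to invariance of the $2$-cell via $sgg^{-1}t=st$ (your $bg\,g^{-1}a=ba$); it then concludes that the groupoid-level quotient is also the quotient of spans, which your explicit map $u$ simply spells out. One small point: since the statement concerns $J$-spans, the $2$-cell component at $(x,a,y,b)$ should be the arrow $b\circ a$ of $J$ itself rather than $\pari(b\circ a)$, but your verifications hold verbatim at that level, so nothing changes.
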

\begin{proof}
  We recall first the equivalence of groupoids.
  For fixed $j\in J$, consider the cube
  \[
  \begin{tikzcd}[row sep={2em,between origins}, column sep={4.8em,between origins}]
   & X_j \times {}_j Y \ar[ld] \ar[rd] \ar[dd, dotted] &   \\
  X_j \ar[dd] & & {}_j Y \ar[dd] \\
  & X \times_J Y \ar[ld] \ar[rd] & \\
  X \ar[rd] & \pi & Y \ar[ld] \\
  & J &
  \end{tikzcd}
  \quad
  =
  \quad
  \begin{tikzcd}[row sep={2em,between origins}, column sep={4.8em,between origins}]
   & X_j \times {}_j Y \ar[ld] \ar[rd]  &   \\
  X_j \ar[dd] \ar[rd] & \commutes & {}_j Y \ar[dd] \ar[ld] \\
  & 1 \ar[dd, pos=0.6, "\name{j}"']  & \\
  X \ar[rd] \ar[ru, phantom, "\sigma"] && Y \ar[ld] \ar[lu, phantom, "\tau"] \\
  & J &
  \end{tikzcd}
  \]
  The three squares $\pi$, $\sigma$, and $\tau$ are pullbacks.
  The dotted arrow is induced uniquely by the universal property 
  of the pullback (in the category of groupoids, so far), so as to make
  the unlabelled squares strictly commutative.
  For varying $j\in J$, the spans $X_j \times {}_j Y$ form a diagram of groupoids
  with colimit $X\times_J Y$.
  Since $J$ is a groupoid, a colimit over it splits into a discrete
  sum over $\pi_0 J$ and then within each component a weak quotient of a $j!$-action,
  in this case the diagonal 
  action of $j!$ on $X_j \times {}_j Y$.
  
  Our task is to check that the diagram is
  also a diagram of $J$-spans, and the key is to check that the 
  action is in fact an action of such spans.
    This happens because it is a
  diagonal action of two actions whose components cancel out. The objects in
  $X_j$ are pairs $(x,s)$ with $x \in X$ and $s: px\isopil j$. An
  element $g$ in the automorphism group $j!$ acts by postcomposition:
  $(x,s)\mapsto (x,s{\cdot} g)$. The objects in ${}_j Y$ are pairs
  $(y,t)$ with $y \in Y$ and $t: j \isopil qy$. An element $g$ in the
  automorphism group $j!$ acts by precomposition: $(y,t)\mapsto (y,g^{-1} {\cdot} t)$. 
  
  The action of $g\in j!$ on the groupoid $X_j \times {}_j Y$ is the diagonal action
  $$
  (x,s,y,t) \mapsto (x,s{\cdot} g,y,g^{-1}{\cdot} t)
  $$
  We claim this is furthermore an action of $J$-spans on the span
    \begin{equation}
  \begin{tikzcd}[row sep={2em,between origins}, column sep={4.8em,between origins}]
    & X_j \times {}_j Y  \ar[ld] \ar[dd] \ar[rd] &  \\
    X \ar[rd] & 
    \ar[l, phantom, pos=0.3, "\commutes"]
    \ar[r, phantom, pos=0.3, "\sigma\cdot \tau"] & Y \ar[ld] \\
    & J &
  \end{tikzcd}
  \end{equation}
  which for the sake of calculation we have arranged with the $2$-cell placed to the 
  right. The component of $\sigma\cdot \tau$ on a point $(x,s,y,t)$ is simply $s\cdot 
  t$.
  
  The $2$-cell data of this action (as in
  \ref{act-on-spans}, but over $J$ rather than over $\P$) are all identities:
    \[
      \begin{tikzcd}[column sep={4em,between origins}]
	X_j {\times} {}_j Y  \ar[rr, "g"] \ar[rd] & \ar[d, phantom, pos=0.3, "\commutes"] & 
    X_j {\times} {}_j \kern-1pt Y \ar[ld] \\
	& J \ar[d]  & \\
    & \P &
  \end{tikzcd}
\qquad
  \begin{tikzcd}[row sep={2em,between origins}, column sep={4.8em,between origins}]
   & X_j \times {}_j Y \ar[dd, "g" description] \ar[ld] \ar[rd] &  \\
  X  & \ar[l, phantom, pos=0.3, "\commutes" description] \ar[r, phantom, 
  pos=0.4, "\commutes" description] & Y , \\
  & \ar[ul] X_j \times {}_j Y \ar[ur] &
  \end{tikzcd}
  \]
  In the end, there is only one equation we need to check, namely
  $$
  (\sigma\tau)_{(x,s,y,t).g} = (\sigma\tau)_{(x,s,y,t)}.
  $$
  Since $g\in j!$ acts by $(x,s,y,t) \mapsto (x,sg,y,g^{-1}t)$, 
  the equation amounts to
  $$
  sg g^{-1} t = st
  $$
  which is certainly true in $J$.
   
  Having seen that the action is actually an action of spans over $J$, it follows 
  that the weak quotient at the level of groupoids is also the weak quotient in
  the category of spans, as required.
\end{proof}

\begin{blanko}{Remark.}
  It is worth noting that the natural action of $j!$ on the groupoid $X_j$ does not 
  give an action on 
  the $J$-span $X \leftarrow X_j \to 1$, and that
  natural action of $j!$ on the groupoid ${}_j Y$ does not give an action on the
  $J$-span $1 \leftarrow {}_j Y \to Y$.
  But the two false actions give together a true action on the composite $J$-span
    $$
  \begin{tikzcd}[row sep={2em,between origins}, column sep={3.6em,between origins}]
    && X_j\times {}_j Y \ar[ld] \ar[rd]&& \\
   & X_j \ar[ld] \ar[rd]&\commutes & {}_jY \ar[ld] \ar[rd]&  \\
  X & & 1 && Y
  \end{tikzcd}
  $$
\end{blanko}

The motivation for the above proposition was to show that 
composition of $\P$-spans can be interpreted as a kind o matrix multiplication
-- even at the objective level, before taking cardinality. (We will take cardinality
in the next section.) Let us record this:

\begin{cor}\label{cor:iMjNk}
  For composable $\P$-spans $\widehat X$ and $\widehat Y$ as above, we have a canonical
  equivalence of scalars
    $$
  {}_i(X\times_J Y)_k \simeq \int_{j\in J} ( {}_i X_j \times {}_j Y_k) .
  $$
\end{cor}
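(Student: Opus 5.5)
Since Proposition~\ref{prop:MjN} already gives the equivalence $X\times_J Y\simeq\int_{j\in J}(X_j\times {}_jY)$ as an equivalence of $J$-spans from $X$ to $Y$, the plan is to deduce the corollary by composing both sides with the elementary states at $i$ and $k$. Concretely, ${}_iM_k$ for a $\P$-span $I\leftarrow M\to K$ is the scalar obtained by precomposing with the elementary state $1\leftarrow 1\stackrel{\name{i}}\to I$ and postcomposing with the dual costate $K\leftarrow 1\to 1$ of $\name{k}$. So I would first pull the whole situation back along $\name{i}$ and $\name{k}$: replace $X$ by the state ${}_iX$ (a $\P$-span $1\leftarrow {}_iX\to J$) and $Y$ by the costate $Y_k$ (a $\P$-span $J\leftarrow Y_k\to 1$). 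Since homotopy pullbacks compose, ${}_i(X\times_JY)_k\simeq {}_iX\times_J Y_k$ as groupoids, and also as scalars, because pullback-composition of $\P$-spans is associative up to equivalence of $\P$-spans (Beck--Chevalley together with pasting of the $2$-cells $\rho,\pi,\rho'$ and those from $\name i$, $\name k$).

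Next I apply Proposition~\ref{prop:MjN} to the pair ${}_iX$, $Y_k$, giving an equivalence of $J$-spans ${}_iX\times_JY_k\simeq\int_{j\in J}({}_iX_j\times{}_jY_k)$ from ${}_iX$ to $Y_k$. The point to verify is that an equivalence of $J$-spans between $\P$-spans, i.e.\ one respecting the maps to ${}_iX$ and $Y_k$ with their $2$-cells over $J$, induces an equivalence of the resulting $\P$-spans $1\leftarrow\cdot\to 1$: the sign at a point $(x,s,y,t)$ of the composite is $\rho_x\cdot\pari(s\cdot t)\cdot\rho'_y$ (with the $\name{i},\name{k}$ contributions), and since the $J$-span equivalence has identity $2$-cell data and preserves $s\cdot t$ on the nose, the $\P$-valued $2$-cell is preserved, hence the sign splitting is preserved. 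Here the key input is that the $j!$-action on ${}_iX_j\times{}_jY_k$ is an action of scalars (not merely groupoids), which follows exactly as in the proof of Proposition~\ref{prop:MjN}: the component $s g g^{-1}t=st$ is invariant, so by \ref{act-on-scalars} the weak quotient is the quotient in $\scalars$, splitting as $(\,\cdot_\pp/j!,\ \cdot_\mm/j!)$.

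The remaining step is identifying the scalar structure on each integrand ${}_iX_j\times{}_jY_k$ with the product of the scalars ${}_iX_j$ and ${}_jY_k$. I expect this to be the main obstacle: individually ${}_iX_j$ and ${}_jY_k$ are scalars with signs $\rho_x\pari(s)$ and $\pari(t)\rho'_y$ (placing $\pari(s)$ and $\pari(t)$ into the respective factors, as in \eqref{integrand/P}), and their product scalar has sign $\rho_x\pari(s)\pari(t)\rho'_y=\rho_x\pari(st)\rho'_y$, matching the composite by the Brahmagupta rule for composition of scalars. The subtlety is that $j!$ does not act on either factor as a scalar (Example~\ref{not-a-scalar-action}), only on the product; so the integral must be read as the homotopy sum of the product scalars with the diagonal action, which is precisely what the previous paragraph established. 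Then by Proposition~\ref{prop:End=SS}, checking an equivalence of scalars reduces to checking equivalences on positive and negative parts separately, which is now immediate.
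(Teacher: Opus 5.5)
Your proposal is correct and follows essentially the paper's route. The paper takes the ${}_i(\ )_k$ fibre of the $J$-span equivalence of Proposition~\ref{prop:MjN}, uses that pullback commutes with colimits, and observes that the result is an equivalence of $\P$-spans from $1$ to $1$. You instead restrict first and apply the proposition to ${}_iX$ and $Y_k$, using associativity of span composition in place of commuting the pullback past the homotopy sum. You also spell out the sign bookkeeping $\rho_x\,\pari(s)\,\pari(t)\,\rho'_y$ and the fact that the diagonal $j!$-action is an action of scalars, which the paper leaves implicit; this reordering is harmless.
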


\begin{proof}
  Just take ${}_i( \ )_k$ fibre of the equivalence in Proposition~\ref{prop:MjN}, 
  noting that taking fibres is a pullback operation and therefore commutes with 
  colimits. The equivalence of $J$-spans from $X$ to $Y$ then becomes an equivalence 
  of $\P$-spans from $1$ to $1$, that is, an equivalence of scalars.
\end{proof}

\section{Cardinality}
\label{sec:card}

The strategy for defining the cardinality of a $\P$-span is to define first
the cardinality of scalars, since in this case it is clear what it should 
be (cf.~\eqref{normS} below). Then, for states and more general spans, the idea is to extract their
fibres, which are scalars, and define the cardinality in terms of the scalars.
This reflects the idea that we may think of a span as a `box full 
of groupoids', and then its cardinality should be the matrix whose entries are
the cardinalities of those groupoids. However, doing this naively would 
disregard the fact that all these groupoids are intertwined to make up the 
total apex of the span. This intertwining will involve both symmetries and signs.

\begin{blanko}{Example.}
  Given a state $1 \leftarrow X \to J$, 
  write it first as a homotopy linear combination of elementary states
  $$
  \widehat X \simeq \int_j X_j \cdot \name{j}
  $$
  where the fibres $X_j$ are scalars. Then define the cardinality to be
  $$
  \norm{\widehat X} = \Norm{\int_j X_j \cdot \name{j}} =
  \Norm{\sum_j \frac{X_j \cdot \name{j}}{j!}}
  \ := \ \sum_j \frac{\norm{X_j}}{\norm{j!}} \cdot \basis_j
  $$
\end{blanko}

\subsection{Cardinality of scalars}

We first define the {\em cardinality of a scalar} $\widehat S$ to be
\begin{equation}\label{normS}
\norm{\widehat S} := \norm{S_{\pp}} - \norm{S_{\mm}} .
\end{equation}

\begin{prop}\label{card-properties}
  Cardinality of scalars behaves as expected with respect to homotopy
  sums and products:
  \begin{align*}
    \norm{S+T} &= \norm{S} + \norm{T} \\
    \norm{S/G} &= \norm{S} / \norm{G} \\
    \norm{S * T} &= \norm{S} \cdot \norm{T} .
  \end{align*}
  
\end{prop}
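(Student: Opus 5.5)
The plan is to reduce all three identities to the corresponding facts about homotopy cardinality of plain finite groupoids. The bridge is Proposition~\ref{prop:End=SS}, which identifies scalars with groupoids over $\{\pp,\mm\}$. Under this identification each operation on scalars becomes an operation on the pair $(S_\pp,S_\mm)$, and we only need to track how the pair transforms.

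\emph{Sums.} The sum of scalars $\widehat S+\widehat T$ is the coproduct of spans. Its sign splitting is computed componentwise, so
$$(S+T)_\pp=S_\pp\sqcup T_\pp,\qquad (S+T)_\mm=S_\mm\sqcup T_\mm.$$
Homotopy cardinality of groupoids is additive on disjoint unions, and subtracting the two equalities gives $\norm{S+T}=\norm{S}+\norm{T}$.

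\emph{Quotients.} Here $S/G$ refers to a $G$-action on the scalar $\widehat S$, that is, an action of scalars. By \ref{act-on-scalars} such an action preserves the sign splitting, and the quotient scalar is $(S_\pp/G,S_\mm/G)$. It then suffices to know the standard fact $\norm{Y/G}=\norm{Y}/\norm{G}$ for the weak quotient of a finite groupoid $Y$ by a finite group $G$. I would justify this via the fibration $Y\to Y/G\to BG$, using multiplicativity of homotopy cardinality for fibrations and $\norm{BG}=1/\norm{G}$. Alternatively, one can count components and automorphism groups directly using orbit--stabiliser. Linearity then gives $\norm{S/G}=\norm{S}/\norm{G}$.

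\emph{Products.} The Remark on composition of scalars computes the convolution product via the Brahmagupta rule:
$$(S*T)_\pp=S_\pp\times T_\pp\sqcup S_\mm\times T_\mm,\qquad (S*T)_\mm=S_\pp\times T_\mm\sqcup S_\mm\times T_\pp.$$
Cardinality of groupoids is multiplicative on cartesian products, since $\pi_0$ and automorphism groups of a product split. Expanding gives
$$\norm{S_\pp}\norm{T_\pp}+\norm{S_\mm}\norm{T_\mm}-\norm{S_\pp}\norm{T_\mm}-\norm{S_\mm}\norm{T_\pp}=(\norm{S_\pp}-\norm{S_\mm})(\norm{T_\pp}-\norm{T_\mm}).$$

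The only delicate point is the quotient statement. It holds only for genuine actions of scalars, and not for arbitrary group actions on the underlying groupoid. Example~\ref{not-a-scalar-action} shows the identity can fail there: the groupoid quotient $\underline G/G=1$ has cardinality $1$, while the scalar $(K,Ka)$ has cardinality $0$. So the proof must read $S/G$ as the quotient in the sense of \ref{act-on-scalars}, which is exactly what makes the splitting compatible. Everything else is bookkeeping.
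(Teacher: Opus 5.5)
Your proposal is correct and follows the paper's own argument. The paper likewise reduces to the sign splitting and stresses that the quotient case requires a genuine action of scalars, which splits into separate actions on $S_\pp$ and $S_\mm$. You fill in the routine checks the paper calls straightforward: componentwise coproducts, $\norm{Y/G}=\norm{Y}/\norm{G}$ for groupoids, and the Brahmagupta rule for $*$.
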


\begin{proof}
  The checks are straightforward. In the case of the quotient, it must 
  be stressed that the action is required to be an action of scalars, 
  which in particular means that it preserves the sign splitting, as
  explained in \ref{act-on-scalars}. The action thus amounts to one action 
  on $S_\pp$ and one action on $S_\mm$, and the rest of the check is 
  easy.
\end{proof}

\begin{blanko}{Remark.}
  For any finite set $k$ we can form 
  $$
  S^k = \underset{k \text{ factors}}{\underbrace{S * \cdots * S}} .
  $$
  The compatibility of cardinality with $*$ now implies also
    $$
  \norm{S^k} = \norm{S}^{\norm{k}} .
  $$
\end{blanko}

\subsection{Odd automorphisms}

\begin{lemma}
  Consider the span composite
\[
\begin{tikzcd}[row sep={2.2em,between origins}, column sep={3.3em,between origins}]
  & & X_s \ar[ld] \ar[rd] & & \\
  & X \ar[ld] \ar[rd, "p"'] && 1 \ar[ld, pos=0.4, "\name{s}"'] \ar[rd] &  \\
  1 & \epsilon & S &\commutes & 1 \end{tikzcd}
\]
  If $s$ has an odd automorphism, then the scalar $\widehat X_s$ has cardinality zero.
    \marginpar{\dbend}
\end{lemma}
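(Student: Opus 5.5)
The plan is to exhibit an involution on the scalar $\widehat X_s$ that interchanges its positive and negative parts, so that $S_\pp \simeq S_\mm$ as groupoids and hence $\norm{X_s}=0$ by \eqref{normS}. First I will describe the scalar explicitly. By the computation in \eqref{integrand/P}, the apex $X_s$ has objects $(x,a)$ with $x\in X$ and $a: px\isopil s$ in $S$. The $2$-cell of the composite scalar has component $\epsilon_x\cdot\pari(a)$ on $(x,a)$. Hence the sign splitting \eqref{eq:sign-splitting} puts $(x,a)$ in $(X_s)_\pp$ exactly when $\epsilon_x\cdot\pari(a)=+1$.

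Next I fix an odd automorphism $\sigma: s\isopil s$ and define the functor $\phi: X_s\to X_s$ by $(x,a)\mapsto(x,a\cdot\sigma)$, acting as the identity on the $X$-component of arrows. An arrow $(x,a)\to(x',a')$ in $X_s$ is an arrow $u:x\to x'$ in $X$ with $a' \cdot p(u) = a$ (up to the paper's composition conventions). Postcomposition with $\sigma$ preserves this relation, so $\phi$ is a well-defined functor. It is an equivalence, indeed an isomorphism, with inverse given by postcomposition with $\sigma^{-1}$.

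Since $\pari(a\sigma)=\pari(a)\pari(\sigma)=-\pari(a)$, the functor $\phi$ sends $(X_s)_\pp$ into $(X_s)_\mm$ and conversely. It therefore restricts to an equivalence of groupoids $(X_s)_\pp \simeq (X_s)_\mm$, which gives $\norm{(X_s)_\pp}=\norm{(X_s)_\mm}$ and so $\norm{\widehat X_s}=0$. Homotopy cardinality is invariant under equivalence of groupoids, so nothing more is needed. Equivalently, $\phi$ is an equivalence of scalars from $\widehat X_s$ to its negative $\overline{\widehat X_s}$, in the spirit of Corollary~\ref{plus=minus}. Indeed, $\widehat X_s$ is the composite of $\widehat X$ with the transposed elementary state at $s$, and that Corollary identifies the elementary state with its negative. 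By functoriality of composition and \S\ref{sec:negative-span} this gives $\widehat X_s\simeq\overline{\widehat X_s}$, and then Proposition~\ref{prop:End=SS} yields $S_\pp\simeq S_\mm$ directly.

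The main obstacle is bookkeeping. I need to check that the sign attached to $(x,a)$ really is $\epsilon_x\pari(a)$ with the paper's orientation of $\pi$, and that $\phi$ respects arrows rather than just objects. Only the flip of parity matters, though, and it is independent of conventions, so I expect no real difficulty. I will present the direct groupoid argument and mention the Corollary~\ref{plus=minus} route as the conceptual reason.
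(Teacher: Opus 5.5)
Your proposal is correct and is essentially the paper's proof. The paper also lets an odd automorphism of $s$ act on $X_s$ by $(x,a)\mapsto(x,a.g)$, observes that this reverses the sign $\epsilon_x\cdot\pari(a)$, and concludes that $(X_s)_\pp\simeq(X_s)_\mm$ as groupoids, so the cardinality is zero; your additional route via Corollary~\ref{plus=minus} is a valid conceptual repackaging of the same observation.
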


\begin{proof}
  The elements of the fibre $X_s$ are pairs $(x,a)$, with $x\in X$ and $a: px\isopil 
  s$ an arrow in $S$. We get a splitting of the scalar via
  $X_s 
  \dashrightarrow 1 \sqcup 1 = 
  \{\pp,\mm\}$ so as to get 
  $$
  X_s = (X_s)_\pp  \sqcup (X_s)_\mm  \,.
  $$
  The sign of a point $(x,a)$ is $\epsilon_x \cdot \pari(a)$.
  The automorphism group $s!$ acts naturally on the groupoid $X_s$
  by
  $$
  (x,a) \mapsto (x,a.g).
  $$
  We have already seen a couple of times
  that this action is not an action of scalars, since for odd $g\in s!$ it will not preserve 
  the sign splitting. On the contrary: it is clear that an odd $g\in s!$ will
  send positive points to negative points and negative points to positive points.
  As such it defines a sign-reversing involution on $X_s$, an isomorphism of
  groupoids 
  $$
  (X_s)_\pp \simeq (X_s)_\mm 
  $$
  making it clear that the total cardinality of the scalar is zero.
\end{proof}

\subsection{Cardinality as a functor}

\begin{blanko}{Discussion.}
  Given a $\P$-span $I \leftarrow M \to J$, we first extract all the two-sided
  fibres ${}_i M_j$. Each is naturally a scalar, and has cardinality
  $$
  \norm{{}_i M_{j}} = \norm{({}_i M_{j})_{\pp}} - \norm{({}_i M_{j})_{\mm}}  .
  $$
  The cardinality of the whole span is going to be a matrix of numbers.
  The entries of this numerical matrix are {\em not} just $\norm{{}_i M_j}$ but
  rather $$\norm{{}_i M_j}/\norm{j!},$$
  as is already the case in homotopy
  linear algebra without signs~\cite{Galvez-Kock-Tonks:1602.05082}, as already 
  discussed.  
  
  There are two problems with this definition: one is that throwing in 
  a symmetry factor should not happen unless it already has a meaning objectively
  as a quotient of a group action, and we have already seen examples illustrating that
  this $j!$-action is not always an action on scalars. The second problem is that in
  the presence of odd automorphisms of a point $j\in J$, the corresponding coefficients
  will always have cardinality zero, and is therefore not relevant as a basis element.
  The solution to both problems is to exclude all non-orientable points in the 
  definition of cardinality.
\end{blanko}

\begin{blanko}{Definition: the cardinality functor $\linpm \to \vect$.}
  \label{card-slice} 
  We define the cardinality functor, first on objects as \marginpar{\dbend}
  \begin{eqnarray*}
    \norm{ \ - \ } : \linpm & \longrightarrow & \vect  \\
    \ZZ_{/T} & \longmapsto & \Q_{\pi_0 T^\circ} ,
  \end{eqnarray*}
  the vector space spanned by symbols $\basis_t$, for $t$ running in a fixed choice of
  an elementary 
  state $1\stackrel{t}\to T$ for each orientable component of $T$.
  
  To define the cardinality of a morphism we need to 
  assign to each  $\P$-span
  $I \leftarrow M \to J$  a linear map 
$$
\Q_{\pi_0 I^\circ} \to \Q_{\pi_0 J^\circ}  ,
$$
and since these vector spaces are  already equipped with bases, we can specify it
with a matrix $[M]$.
We define $[M]$ to be the $\pi_0(I^\circ)$-by-$\pi_0(J^\circ)$ 
matrix whose $(i,j)$-entry is
$$
{}_i[M]_j := \norm{{}_i M_j/j!} .
$$
Note that the $j!$-action on ${}_i M_j$ is a bona fide action on scalars since $j$
does not have odd automorphisms.

It remains to check functoriality, which we state as a proposition:
\end{blanko}

\begin{prop}
  The cardinality assignment is functorial: the cardinality of a composed
  $\P$-span 
  \[
\begin{tikzcd}[row sep={2.5em,between origins}, column sep={3em,between origins}]
  && M \times_J N \ar[ld] \ar[rd] &&
  \\
  & M  \ar[ld] \ar[rd] & \pi & N \ar[ld] \ar[rd] &
  \\
  I 
  & \rho & J
  & \rho' & K
\end{tikzcd}
\]
  is the matrix product of the two individual cardinality matrices.
  Precisely, for fixed $i\in \pi_0 (I^\circ)$ and $k\in \pi_0 (K^\circ)$ we have
  the following identity of rational numbers:
  $$
  {}_i[M{\times\!_J} N]_k = \sum_{j \in \pi_0(J^\circ)} {}_i [M]_j \cdot {}_j [N]_k
  $$
\end{prop}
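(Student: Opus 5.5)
The plan is to work entirely at the level of scalars and let Corollary~\ref{cor:iMjNk} do the objective work, so that only cardinality bookkeeping remains. Fix $i\in\pi_0(I^\circ)$ and $k\in\pi_0(K^\circ)$. Corollary~\ref{cor:iMjNk} gives an equivalence of scalars
$$
{}_i(M\times_J N)_k \simeq \int_{j\in J} ({}_i M_j \times {}_j N_k),
$$
and the homotopy sum splits as $\sum_{j\in\pi_0 J} ({}_i M_j\times{}_j N_k)/j!$. Here the quotient is taken for the diagonal $j!$-action. By the proof of Proposition~\ref{prop:MjN} this diagonal action is a bona fide action of scalars, even when $j$ has odd automorphisms. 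Since an equivalence of scalars preserves the sign splitting, it preserves cardinality. Proposition~\ref{card-properties} then gives
$$
\norm{{}_i(M\times_J N)_k} = \sum_{j\in\pi_0 J} \frac{\norm{{}_i M_j\times {}_j N_k}}{\norm{j!}} .
$$
Dividing by $\norm{k!}$ (the $k!$-action is a scalar action because $k$ is orientable) produces the left-hand side ${}_i[M\times_J N]_k$.

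The next step is to discard the non-orientable $j$. If $j\in\pi_0 J$ has an odd automorphism $g$, then $g$ acts on ${}_iM_j$ via its right leg, namely the first factor of the diagonal action. This action reverses the sign of every point, because the sign of $(m,a)$ involves $\pari(a)$, exactly as in the lemma on odd automorphisms. It therefore gives an isomorphism $({}_iM_j)_\pp\simeq({}_iM_j)_\mm$, so $\norm{{}_iM_j}=0$. Applying the same lemma with the costate $\name{i}$ on the left should be a routine transposition. By multiplicativity, $\norm{{}_iM_j\times{}_jN_k} = \norm{{}_iM_j}\cdot\norm{{}_jN_k}$, so the $j$-term vanishes. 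One detail must be checked here: the scalar ${}_iM_j\times{}_jN_k$, with the sign coming from the pasting of $\rho,\pi,\rho'$, is the $*$-product of the two individual scalars. This is precisely the content of the equivalence of $J$-spans in Proposition~\ref{prop:MjN}, where the $2$-cell is $s\cdot t$. Consequently, only $j\in\pi_0(J^\circ)$ contribute.

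For an orientable $j$, the last step is to redistribute the factor $1/\norm{j!}$. We need
$$
\frac{\norm{{}_iM_j}\,\norm{{}_jN_k}}{\norm{j!}\,\norm{k!}} = \frac{\norm{{}_iM_j}}{\norm{j!}}\cdot\frac{\norm{{}_jN_k}}{\norm{k!}},
$$
which is a tautology once multiplicativity is available. Summing over $j\in\pi_0(J^\circ)$ gives the stated identity. I also need that the answer does not depend on the chosen representatives $j$, or on the chosen signs of the elementary states. For the representatives this is clear because $\norm{j!}$ and the scalars are invariant under even arrows. A sign change of $\name{j}$ negates both ${}_i[M]_j$ and ${}_j[N]_k$, so it cancels in the product. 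Identities are handled by the example with $t!$ acting freely and transitively, where the orientable case gives diagonal entries $1$.

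The main obstacle I expect is the vanishing step. The scalar structure on ${}_iM_j\times{}_jN_k$ must be identified precisely as the product of the two fibre scalars, keeping track of how $\pi$'s component $s\cdot t$ splits between the factors. Only with that identification can the sign-reversing involution on one factor legitimately be applied inside the product. If this identification is awkward, the fallback is to exhibit the involution directly on the product: act by $g$ on the $X_j$ coordinate only, and check that it flips the sign $\rho_m\cdot s\cdot t\cdot\rho'_n$.
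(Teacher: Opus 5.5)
Your proposal is correct and follows the paper's own argument: apply Corollary~\ref{cor:iMjNk}, push cardinality inside using Proposition~\ref{card-properties} for sums, quotients and products, and then divide by $\norm{k!}$. You also dispose of the non-orientable $j$ explicitly via the lemma on odd automorphisms, and you identify the sign on ${}_iM_j\times{}_jN_k$ as the $*$-product of the two fibre scalars; this fills in a step the paper leaves implicit, since its final sum runs over all of $\pi_0 J$ and writes $\norm{{}_iM_j/j!}$ even where $j!$ does not act on the scalar.
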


\begin{proof}
  We have already done most of the work. From Corollary~\ref{cor:iMjNk} we have an
  equivalence of scalars 
  $$
  {}_i(M\times_J N)_k \simeq \int_{j\in J} ( {}_i M_j \times {}_j N_k) .
  $$
  Now take cardinality of this equivalence, and use compatibility of
  cardinality with sums, quotients and products (cf.~\ref{card-properties}) to move
  the cardinality signs further in, so as to arrive at
$$
\norm{{}_i (M{\times\!_J} N)_{k}}
=
\sum_{j\in \pi_0 J}  \frac{\norm{{}_i M_{j}} \cdot \norm{{}_j N_{k}}}{\norm{j!}} .
$$
Finally divide by $\norm{k!}$  on both sides of the equation to arrive
at 
$$
\norm{{}_i (M{\times\!_J} N)_{k}/k!}
=
\sum_{j\in \pi_0 J}  \norm{{}_i M_{j} /j!} \cdot \norm{{}_j N_{k}/k!} ,
$$
which is what we wanted to show.
\end{proof}

\begin{blanko}{Philosophical remarks.}
  It is a pleasing feature that the signs do not appear in these proofs. They are 
  taken care of by the formalism and the attention to scalars rather than to signs.
  
  One might wonder if the non-orientable points, the odd automorphisms, could have been
  excluded from the beginning, already in Section 2. But this would not be possible,
  because odd automorphisms arise naturally in many constructions, 
  even in problems that start with no automorphisms at all. 
  The final section will illustrate that. Allowing odd automorphisms
  is necessary for an elegant theory, even
  if they are not visible in cardinality.
\end{blanko}

\begin{blanko}{Remark on calculation.}\label{iMj}
  The two-sided fibres are computed by
  pullback like this:
  \[
  \begin{tikzcd}[row sep={2.5em,between origins}, column sep={3em,between origins}]
    &&& {}_i M_{j} \ar[ld] \ar[rd] &&& \\
    && {}_i M \ar[ld] \ar[rd] && M_{j} \ar[ld] \ar[rd] && \\
    & 1  \ar[rd, "\name i"'] && M \ar[ld, "s"] \ar[rd, "t"'] && 1 \ar[ld, 
    "\name j"] & \\
     && I \ar[rd] && J \ar[ld] && \\
    &&& \P .  &&&
  \end{tikzcd}
  \]
  All the squares here carry a $2$-cell: the bottom
  square is the original $\P$-span structure of $M$, and the other three squares are
  homotopy pullbacks. The scalar structure on the two-sided fibre involves the
  composed $2$-cell.

  For the purpose of calculation, it is practical to assume (or arrange)
  that $s$ is a fibration.  Then we can calculate the two-sided
  fibres ${}_i M_j$ with the help of only one homotopy pullback and two
  strict ones:
  \[
  \begin{tikzcd}[row sep={3em,between origins}, column sep={4em,between origins}]
    &&& {}_i M_{j} \ar[ld] \ar[rd] &&& \\
    && {}_i M \ar[ld] \ar[rd] & \strict & M_{j} \ar[ld, onto] \ar[rd] && \\
    & 1  \ar[rd, "\name i"'] & \strict & M \ar[ld, onto, "s"] \ar[rd, "t"'] & \ho & 1 \ar[ld, 
    "\name j"] & \\
     && I \ar[rd] &\epsilon & J \ar[ld] && \\
     &&& \P &&&
  \end{tikzcd}
  \]
  The point is that thanks to the homotopy pullback, also the map $M \leftarrow 
  M_j$ becomes a fibration, so that the two strict pullbacks are still 
  homotopically meaningful. In this stricter (but homotopy equivalent) 
  version of the two-sided fibre, 
  the objects of ${}_i M_j$ are pairs $(m,\gamma)$ where $s(m)=i$ and 
  $\gamma: t(m) 
  \isopil j$, and the arrows from $(m,\gamma)$ to $(m',\gamma')$ are 
  $\theta: m \isopil m'$ in $M$ such that $s(\theta)=\id_i$ and
  \[
  \begin{tikzcd}[row sep={2em,between origins}, column sep={4.2em,between origins}]
  t(m) \ar[dd, "t(\theta)"']  \ar[rd, pos=0.35, "\gamma"] &  \\
   &  j , \\
  t(m') \ar[ru, pos=0.35, "\gamma'"'] &
  \end{tikzcd}
  \]
  In the scalar ${}_i M_j$, the sign of a point $(m,\gamma)$ is now the
  product $\epsilon_m \cdot \pari(\gamma)$.

  It is no real loss of generality to  demand $s$ to be a fibration,
  because it could always be replaced by a fibration, for example by
  homotopy pullback along the identity map. We shall exploit this
  asymmetric, stricter set-up in the calculation of determinants in
  Section~\ref{sec:det}.
\end{blanko}

\subsection{The universal property of the cardinality functor}

We have the natural functor $\P \to \vect$ sending the unique point to $\Q$ and sending
the arrows $\pm1$ to the linear maps $\pm \Id: \Q\to\Q$ (which are just scalars).
In complete analogy, we also have the functor $\P \to \linpm$, sending the unique point to $\ZZ_{/e}$ and
sending the arrows $\pm1$ to the linear functors (again just scalars)

\begin{equation}\label{pm1}
\begin{tikzcd}[row sep={2em,between origins}, column sep={4.8em,between origins}]
  & 1 \ar[ld] \ar[rd] &  \\
  1 \ar[rd, "\e"'] & (+1) & 1  , \ar[ld, "\e"] \\
  & \P &
\end{tikzcd}
\qquad
\begin{tikzcd}[row sep={2em,between origins}, column sep={4.8em,between origins}]
  & 1 \ar[ld] \ar[rd] &  \\
  1 \ar[rd, "\e"'] & (-1) & 1  , \ar[ld, "\e"] \\
  & \P &
\end{tikzcd}
\end{equation}
respectively. Closely related is the functor $\P \to \ZZ$, sending the unique point
to $\e: 1 \to \P$, which, as we have seen, exhibits $\ZZ$ as the finite-homotopy-sum completion
of $\P$. Here we are dealing with a linear version of that, and proceed to pinpoint 
the universal property, both of $\P\to\linpm$ and of the cardinality functor.

\begin{prop}\label{prop:universal}
  The cardinality functor is the unique finite-homotopy-sum preserving functor making 
  this triangle commute (up to natural isomorphism):
  \[
\begin{tikzcd}[row sep={3.6em,between origins}, column sep={4.2em,between origins}]
  & \P \ar[ld] \ar[rd] &
  \\
  \linpm \ar[rr, dashed, "\norm{ \; - \; }"'] && \vect
  \end{tikzcd}
\]
\end{prop}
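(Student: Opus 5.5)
The plan is to prove uniqueness and existence separately, treating $\linpm$ as the finite-homotopy-sum completion of $\P$ in the linear sense.

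\emph{Uniqueness.} First I will establish a linear analogue of Lemma~\ref{lem:colim-of-e}. For each parity groupoid $T$, the composite $T\to\P\to\linpm$ sends $t$ to $\ZZ_{/\e}$ and an arrow $a$ to the scalar $\pm1$ of \eqref{pm1} according to $\pari(a)$. The elementary states $1\leftarrow 1\stackrel{\name t}\to T$ form a cocone from this diagram to $\ZZ_{/T}$, and I claim it exhibits $\ZZ_{/T}$ as the homotopy colimit $\int_{t\in T}\ZZ_{/\e}$ in $\linpm$.

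The proof of the claim is a Yoneda-type check. A compatible family of spans $\ZZ_{/\e}\to\ZZ_{/J}$ indexed by $T$ is the same as a state-valued functor on $T$. By the decomposition $\widehat X\simeq\int_{t}\name t\lowershriek\name t\upperstar\widehat X$ (the projection proposition, with Proposition~\ref{prop:MjN} handling the signs), this amounts to a single span $T\leftarrow M\to J$.

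Given this, any finite-homotopy-sum preserving $F$ with $F|_\P\cong(\P\to\vect)$ must satisfy
\[
F(\ZZ_{/T})\cong\operatorname{colim}_{t\in T}\Q ,
\]
where the diagram sends each arrow $a$ to $\pari(a)\cdot\Id$. This colimit splits over $\pi_0T$ into coinvariants $\Q_{t!}$ for the sign action. If $t$ has an odd automorphism $g$, then $x=-x$ in the coinvariants, so they vanish; otherwise they are $\Q$. Hence $F(\ZZ_{/T})\cong\Q_{\pi_0T^\circ}$. On morphisms, $F$ of a span is then determined by its values on the cocone legs $\name t$ with $t$ orientable, which are states, and by $F$ on scalars, which are themselves homotopy sums of $\pm1$. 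So $F$ is forced.

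\emph{Existence.} Here I check that $\norm{-}$ preserves these homotopy sums and restricts correctly to $\P$. The restriction is immediate, since $\norm{\pm1}=\pm1$ by \eqref{normS}. For colimit preservation, I must show that the images $\norm{\name t}$ form a colimit cocone in $\vect$. The matrix of $\name t$ has $j$-entry $\norm{{}_1(1)_j/j!}$, which is the fibre $\Omega$-type computation: it gives $\pm\basis_t$ for orientable $t$ (the sign being the parity of the identification with the chosen representative) and $0$ otherwise, by the lemma on odd automorphisms. This matches the coinvariant description. Because $\vect_\Q$ is $1$-semiadditive, the induced map out of the colimit is the identity on the basis $\basis_t$, with the factors $1/\norm{t!}$ supplied by the norm map, exactly as in the projection formula.

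\emph{Main obstacle.} The hardest step is the first claim, that $\ZZ_{/T}$ really is the homotopy sum of copies of $\ZZ_{/\e}$ inside the bicategory $\linpm$, rather than merely in $\ZZ$. The signs on the transition maps must match the $2$-cells of the span cocone. My plan is to reduce this to the proposition decomposing a state as $\int_j\name j\lowershriek\name j\upperstar$, together with Corollary~\ref{cor:iMjNk}. If this direct route proves too delicate, I would instead invoke Harpaz's theorem: since $\vect_\Q$ is $1$-semiadditive, finite-homotopy-sum preserving functors out of the $1$-semiadditive completion are determined by their restriction to $\P$.
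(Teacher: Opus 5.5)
Your proposal is correct and follows essentially the paper's route: present $\ZZ_{/T}$ as the homotopy sum $\int_{t\in T}\ZZ_{/\e}$ in $\linpm$, compute its image in $\vect$ as sign-coinvariants (which vanish exactly on components with odd automorphisms), and force the value on scalars by writing them as homotopy sums of the basic scalars $\pm1$. The paper likewise only asserts that the colimit description persists in $\linpm$, omits the general morphism case, and leaves existence to the Harpaz discussion, so your reduction of general spans to states and scalars and your direct existence check extend its argument rather than depart from it (one small slip: no norm map enters in showing that $(\norm{\name t})_t$ is a colimit cocone, since the $/j!$ is already built into the matrix entries and the comparison map from the coinvariants is directly the identity on basis vectors).
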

\begin{proof}
  Commutativity of the triangle forces us to take $\norm{\ZZ_{/\e}} = \Q$.  
  Next we consider the case of $\ZZ_{/BG}$, 
  for $BG{\to}\P$ corresponding to a group homomorphism $G 
  \to O(1)$.
  We already know (from \ref{ex:BG-as-colimit})
  that $BG{\to}\P$ is (uniquely) the weak quotient of the $G$-action on $\e$
  given by $g\in G$ acting as
  \[
  \begin{tikzcd}[row sep={3.8em,between origins}, column sep={3.8em,between origins}]
  1 \ar[rd] \ar[rr, "\id"] & \ar[d, phantom, pos=0.25, 
  "\sign(g)" description] & 1 \ar[ld]  \\
   & \P &
  \end{tikzcd}
  \]
  This colimit description in $\ZZ$ is also a colimit description in $\linpm$.
  But these maps are in the image of the functor $\P\to\linpm$, so we are forced to
send them to $\pm\Id : \Q\to\Q$. Since the functor we are constructing has to
preserve this quotient description of $BG{\to}\P$, 
the value on $BG{\to}\P$ therefore has to be the quotient
in $\vect$ of the sign action of $G$ on $\Q$. (Note that the quotient in $\vect$ is
just the ordinary vector-space quotient, because $\vect$ is just a $1$-category.)
But as soon as $G$ has an odd
element, this quotient is the zero vector space! 
In other words $\ZZ_{/BG}$ is sent to the vector space $\{0\} = \Q_\emptyset = 
\Q_{\pi_0(BG)^\circ}$, if $BG$ has odd automorphisms.
Finally for the general case:
For any object
  $\ZZ_{/S} \in\linpm$ (with reference to a parity structure 
  $S{\to}\P$), we first write $S$ as a homotopy sum of copies of $\e$ 
  (cf.~\ref{lem:colim-of-e}).
  This expands to a discrete sum over $s\in \pi_0(S)$ of objects
  $B\Aut(s) \to \P$ (with parity structure induced from $S$). 
  For each of these summands we get a contribution $\Q_{\pi_0BG^\circ}$,
  and since the functor is required to preserve sums, we get altogether
$$
\norm{\ZZ_{/S}} = \Q_{\pi_0S^\circ} ,
$$
forced by the requirement of compatibility with the functors from $\P$.

This shows that the assignment on objects is forced. For the assignment on
morphisms we first do the case of a scalar.
  We already know where the two basic scalars \eqref{pm1} are sent. Any other scalar
  is a homotopy sum of a number of copies of these basic scalars, so there is no
  other choice than the assignment already made in the definition of cardinality.
  
  We omit the general case.
  Instead, we discuss the broader context of the situation.
\end{proof}

There is a broader context of the universal property pointed out to us
by Maxime Ramzi. The following general result was proved by 
Harpaz, motivated by chromatic homotopy theory.

\begin{theorem}[{Harpaz~\cite[Thm.4.1]{Harpaz:1703.09764}}]
  The $\infty$-category of spans of $m$-finite $\infty$-groupoids is the
  free $m$-semiadditive $\infty$-category on a point. This means that for any
  $m$-semiadditive $\infty$-category $\DD$, the natural functor
  $$
  \Fun^{m\operatorname{-sum}}(\spand_m, \DD) \longrightarrow \DD
  $$
  given by evaluation at $\mathcal{S}_m$, is an equivalence.
\end{theorem}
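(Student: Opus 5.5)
The plan is to split the statement into two parts. First, $\spand_m$ is itself $m$-semiadditive. Second, it is freely generated by the point $\mathcal{S}_m$ (the terminal $m$-finite $\infty$-groupoid) among such categories.

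For the first part, I would use the fact that for an $m$-finite $\infty$-groupoid $A$, the colimit and the limit in $\spand_m$ of an $A$-indexed diagram are computed by the same object, namely the total space of the corresponding fibration over $A$. The two structure spans then give the colimit and the limit cones. The norm map from colimit to limit is then the identity span, hence invertible. This should be proved by induction on $m$, following the Hopkins--Lurie ambidexterity formalism. The norm map for $m$-finite $A$ is only defined once $(m-1)$-semiadditivity (invertibility of the norm for the diagonal $A\to A\times A$) is known. The base case consists of spans of finite sets, where this is ordinary semiadditivity.

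For the universal property, I would proceed as follows.
\begin{enumerate}
\item Recall that the $\infty$-category of $m$-finite $\infty$-groupoids is the free completion of a point under $m$-finite colimits. Hence for an object $d\in\DD$ there is an essentially unique $m$-finite-colimit-preserving functor $F\colon X\mapsto \operatorname{colim}_X d$, covariant in forward maps.
\item Use $m$-semiadditivity of $\DD$ to produce wrong-way maps. For a map $f\colon A\to B$ of $m$-finite $\infty$-groupoids, the left and right adjoints of restriction along $f$ agree, $f_!\simeq f_*$ in $\DD$. The unit of $f^*\dashv f_*$, composed with the inverse norm, gives $F(B)\to F(A)$.
\item Check the Beck--Chevalley condition for pullback squares: forward-then-backward agrees with backward-then-forward. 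Then invoke the universal property of span $\infty$-categories (Barwick, or Gaitsgory--Rozenblyum): a functor with a compatible contravariant part satisfying base change extends to spans.
\item Show that the resulting functor preserves $m$-finite colimits; this holds because colimits in $\spand_m$ are computed by the underlying groupoid colimits, by the first part.
\item Show that evaluation at $\mathcal{S}_m$ is fully faithful. Natural transformations between two such functors are determined by their component at the point, since every object is an $m$-finite colimit of points and all spans are generated by forward and backward maps.
\end{enumerate}

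The main obstacle will be the coherence in steps 2 and 3. The wrong-way maps must assemble into a functor from the opposite of the category of $m$-finite $\infty$-groupoids, and Beck--Chevalley must hold coherently, not just squarewise. I would handle this by working with the whole parametrised family $A\mapsto \Fun(A,\DD)$ at once. The assignment $f\mapsto f_!$ is functorial, and so is $f\mapsto f_*$. Ambidexterity supplies a natural equivalence between the two functors, which is canonical by induction on $m$, so that the bivariant functor comes out coherent rather than assembled by hand. Base change is then a formal consequence of base change for $f_!$ and $f_*$ in the functor categories.
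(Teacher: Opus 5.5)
The paper gives no proof of this statement; it is quoted from Harpaz, so your outline has to be judged on its own. Its overall shape is the natural one, and broadly the shape of Harpaz's argument: $\spand_m$ is $m$-semiadditive, $\mathcal{S}_m$ is the free $m$-finite-colimit completion of a point, wrong-way maps come from the ambidextrous unit, and colimit preservation holds because every $m$-finite diagram in $\spand_m$ factors through the core, hence through $\mathcal{S}_m$. Steps~1, 2 and~4 are fine as sketches. A minor point about the semiadditivity part: the induction should run over $k\le m$ for the fixed category $\spand_m$. The norm for an $m$-finite $A$ needs $(m-1)$-semiadditivity of $\spand_m$ itself, not of $\spand_{m-1}$.

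The real gap is step~3, which is where the whole content of the theorem lies. The extension theorems of Barwick (unfurling) and Gaitsgory--Rozenblyum apply only to certain targets: $\mathrm{Cat}_\infty$, or $(\infty,2)$-categories in which the covariant maps are adjoints of the contravariant ones. There base change is a \emph{condition}, namely invertibility of a mate. For a general $(\infty,1)$-category $\DD$, the base-change homotopies for $A\mapsto\operatorname{colim}_A d$ are \emph{data}. They must be compatible with pasting of pullback squares and with composition, to all orders. Checking base change one square at a time therefore does not produce a functor $\spand_m\to\DD$.

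Your fallback does not repair this. Unfurling $A\mapsto\Fun(A,\DD)$ gives a functor $\spand_m\to\mathrm{Cat}_\infty$ sending $(A\xleftarrow{p}C\xrightarrow{q}B)$ to $q_!p^*$. But this functor exists for \emph{every} $\DD$ with $m$-finite colimits, because left Kan extension along maps of spaces always satisfies base change. Semiadditivity plays no role in it. Separately, upgrading the pointwise norm equivalences $f_!\simeq f_*$ to a coherent natural equivalence of functors $\mathcal{S}_m\to\mathrm{Cat}_\infty$ has to be proved; it is not automatic ``by induction''.

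What is missing is the passage from this parametrised family to a single functor $\spand_m\to\DD$, $A\mapsto A_!A^*d$. Its value on a backward leg $p\colon C\to A$ must be $A_!$ applied to the ambidextrous unit $\mathrm{id}\to p_*p^*\simeq p_!p^*$. For instance, $A\mapsto(A,A^*d)$ is a coherent section of the unfurled fibration over $\spand_m$. To land in $\DD$, however, you would then need a functor $(A,X)\mapsto A_!X$ on its total space, and defining it on morphisms requires exactly these transfers, coherently. Supplying that coherent structure is the actual work in Harpaz's paper, and your proposal does not say how to do it.

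Step~5 needs a similar input. The facts that every object is a colimit of points and that spans are generated by forward and backward maps do not by themselves identify mapping spaces in $\Fun^{m\operatorname{-sum}}(\spand_m,\DD)$. You would need, for example, that any $m$-finite-colimit preserving functor out of $\spand_m$ automatically preserves norm maps, so that its backward functoriality is forced by its restriction to $\mathcal{S}_m$. You would then have to upgrade this objectwise statement to an equivalence of functor $\infty$-categories, which again requires the same coherence machinery.
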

We briefly explain the notions involved, referring to Harpaz's paper for details.
An $\infty$-groupoid is $m$-finite if it is finite and $m$-truncated.
So $1$-finite $\infty$-groupoids are the groupoids of the present paper.
The colimits required preserved by the functors are colimits over $m$-finite 
$\infty$-groupoids, so in the case $m=1$ it is about what in the present paper is called
homotopy sums. $\spand_m$ is the category of spans of $m$-finite groupoids, 
so for $m=1$ it is what we call $\lin$ (and for $m=\infty$ it is the 
$\infty$-category $\lin$ from \cite{Galvez-Kock-Tonks:1602.05082}).

Explaining the notion of $m$-semiadditivity (originally due to Hopkins and Lurie, see
\cite{Harpaz:1703.09764}) would take us too far afield,
as it is a subtle notion,
requiring an inductive definition. Roughly an $\infty$-category is $m$-semiadditive if colimits
over $m$-finite $\infty$-groupoids are identified with limits, along a specific
comparison functor whose definition relies on the assumption that the category is
already $(m-1)$-semiadditive.

The $m=1$ instance of Harpaz's theorem thus reads
\begin{cor}
  The category $\lin$ is the
  free $1$-semiadditive category on a point. This means that for any
  $1$-semiadditive category $\DD$, the natural functor
  $$
  \Fun^{\operatorname{hsum}}(\lin, \DD) \longrightarrow \DD
  $$
  given by evaluation at $\FF$, is an equivalence.
\end{cor}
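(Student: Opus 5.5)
This corollary is the case $m=1$ of Harpaz's theorem, so the plan is to identify each ingredient of the general statement with the corresponding notion in this paper and then read off the conclusion.

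\textbf{Step 1: the objects.} A $1$-finite $\infty$-groupoid is a finite, $1$-truncated $\infty$-groupoid. Up to equivalence this is exactly a finite groupoid in the sense of $\FF$: finitely many components, each with a finite automorphism group. So $\mathcal{S}_1$, the $\infty$-category of $1$-finite spaces, is the $(2,1)$-category $\FF$. Its objects are the objects of $\spand_1$.

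\textbf{Step 2: the morphisms.} Morphisms in $\spand_1$ are spans of finite groupoids, composed by homotopy pullback. By \ref{sub:lin}, these are exactly the linear functors $\FF_{/S}\to\FF_{/T}$, via $q\lowershriek\circ p\upperstar$, with Beck--Chevalley matching pullback-composition to composition of functors. Equivalences of spans correspond to natural equivalences of these functors. Sending $S$ to the slice $\FF_{/S}$ therefore gives an equivalence $\spand_1\simeq\lin$. Under it, the point $1$ (equivalently the object $\mathcal{S}_1$, viewed as the slice over a point) goes to $\FF_{/1}=\FF$.

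\textbf{Step 3: the functor classes.} The $m$-sum-preserving functors in Harpaz's theorem are those preserving colimits indexed by $1$-finite $\infty$-groupoids, that is, homotopy sums. This is the class $\Fun^{\operatorname{hsum}}$ in our statement. The evaluation functor is evaluation at the generating object, which under Step 2 is $\FF$.

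\textbf{Main obstacle.} The only real care needed is in Step 2. Harpaz works in the $\infty$-categorical span category, whereas $\lin$ was introduced as a bicategory of slices. We need these to agree as $(2,1)$-categories, with invertible $2$-cells on both sides and no higher data, because everything is $1$-truncated. We also need the mapping groupoids to match, not just their $\pi_0$. I would justify this by the standard fact that linear functors between slices form a groupoid equivalent to the groupoid of spans, as in \cite[2.10]{Galvez-Kock-Tonks:1602.05082}.

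With these identifications in place, the corollary is a direct restatement of Harpaz's theorem for $m=1$.
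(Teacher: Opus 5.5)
Your route is the paper's own. The paper states the corollary simply as the $m=1$ case of Harpaz's theorem, identifying $\spand_1$ with $\lin$ and $1$-finite colimits with homotopy sums, and your Steps 1--3 spell out exactly that. The one thing to correct is the justification in your ``main obstacle'' paragraph, which gets the truncation level wrong.

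It is not true that $\spand_1$ is a $(2,1)$-category with no higher data. Its mapping space from $X$ to $Y$ is the core of $\FF_{/X\times Y}$. Since $\FF_{/X\times Y}$ is itself a $(2,1)$-category, that core is a $2$-groupoid, not a groupoid. Concretely, take the span $1\leftarrow BG\to 1$:
\begin{itemize}
\item its self-equivalences are self-equivalences of $BG$;
\item these in turn have automorphisms, namely natural transformations $\id_{BG}\Rightarrow\id_{BG}$, i.e.\ elements of the centre $Z(G)$;
\item so the component of $BG$ in $\Map_{\spand_1}(1,1)=\FF^{\simeq}$ has $\pi_2\cong Z(G)$.
\end{itemize}
Hence $\spand_1$ is a $(3,1)$-category. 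An equivalence of $(2,1)$-categories, for instance with the bicategory whose $2$-cells are equivalences of spans, only identifies $\lin$ with the truncation $h_2\spand_1$. Harpaz's theorem says nothing about that truncation. Moreover, homotopy-sum preserving functors from $\spand_1$ into a $1$-semiadditive $\infty$-category whose mapping spaces are not $1$-truncated need not factor through it. So the step as you phrase it (``mapping groupoids'', ``a groupoid equivalent to the groupoid of spans'') does not give the statement for general $\DD$.

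The repair is easy and uses the same reference. Take $\lin$ in the $\infty$-categorical sense of \cite{Galvez-Kock-Tonks:1602.05082}. Its mapping spaces are then the full $2$-groupoids of linear functors, natural equivalences and invertible modifications, i.e.\ the cores of the $(2,1)$-categories $\Fun^{\mathrm{lin}}(\FF_{/S},\FF_{/T})$. The cited result is an equivalence of categories $\Fun^{\mathrm{lin}}(\FF_{/S},\FF_{/T})\simeq\FF_{/S\times T}$ (here of $(2,1)$-categories), not merely of groupoids. Passing to cores therefore matches the mapping $2$-groupoids, and Beck--Chevalley makes this compatible with composition. With $\lin$ understood this way, your argument is complete.
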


We have quoted the above theorem because the idea of it could be explained in a few 
paragraphs. But it is actually a more general theorem (also due to 
Harpaz) that we need:

\begin{theorem}[{Harpaz~\cite[Thm.5.29]{Harpaz:1703.09764}}]
  The $\infty$-category of $\CC$-decorated spans of $m$-finite $\infty$-groupoids is the
  free $m$-semiadditive $\infty$-category on $\CC$.
\end{theorem}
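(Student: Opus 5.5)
The plan is to follow the strategy of Harpaz's proof of the undecorated case, now keeping track of a decoration functor $\CC$ along every leg. First I would set up the object under study. A $\CC$-decorated span from $X$ to $Y$, where $X,Y$ are $m$-finite $\infty$-groupoids over $\CC$ (that is, equipped with functors $X\to\CC$, $Y\to\CC$), is a span $X\leftarrow M\to Y$ together with a natural transformation between the two composite functors $M\to\CC$. Composition is by pullback, pasting the transformations. The case $\CC=\P$ is exactly the simplified $\P$-spans of \eqref{eq:BZ2span-simplified}, so $\linpm$ is the $1$-finite instance. The comparison functor I aim to prove an equivalence is
$$
\Fun^{m\text{-sum}}(\spand_m(\CC),\DD)\longrightarrow\Fun(\CC,\DD),
$$
obtained by restricting along the inclusion $\CC\to\spand_m(\CC)$ that sends $c$ to $1\stackrel{c}\to\CC$ and sends an arrow $c\to c'$ to the identity span with that arrow as $2$-cell (as in the functor $\P\to\linpm$ above).

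The next step is to show that $\spand_m(\CC)$ is itself $m$-semiadditive. Colimits and limits over an $m$-finite $A$ are both computed by $A$-indexed families in $\CC$ and the total space $\int_A$: the same object with two different span structures, one using the forward leg and one using the reverse leg. The norm map between them is the span built from the diagonal $A\to A\times A$. This has to be done inductively in $m$, exactly as Harpaz does. The decoration contributes only a transformation along the diagonal, which is canonically trivial, so I expect the induction to carry over verbatim.

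Then I would show that every object is an $m$-finite colimit of objects in the image of $\CC$: any $X\to\CC$ is $\int_{x\in X}(1\stackrel{x}\to\CC)$. This is the decorated analogue of Lemma~\ref{lem:colim-of-e} and of the canonical decomposition \eqref{int-of-fib}. It gives essential uniqueness of extensions. For existence, given $F:\CC\to\DD$, I would define $\widetilde F(X)=\operatorname{colim}_X F$. On a span $X\stackrel p\leftarrow M\stackrel q\to Y$ with transformation $\rho$, I would send it to the composite
$$
\operatorname{colim}_X F\longrightarrow\operatorname{colim}_M Fp\stackrel{\rho}\longrightarrow\operatorname{colim}_M Fq\longrightarrow\operatorname{colim}_Y F.
$$
Here the first map is the wrong-way map $p^*$, which exists by $m$-semiadditivity as the right adjoint of $p_!$ identified with its left adjoint.

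The main obstacle is functoriality of $\widetilde F$ (coherently, in the $\infty$-setting), which comes down to Beck--Chevalley for the pullback composite: $p^*$ must commute with $q_!$ across pullback squares in $\DD$. For $m$-semiadditive $\DD$ this is a known property of the ambidextrous norm, and the decoration only adds a pasting of transformations. That is the same bookkeeping as in Proposition~\ref{prop:MjN}, where the two "false" actions combine into a true one. To get full coherence rather than just the $1$-categorical statement, I would try first to avoid constructing $\widetilde F$ by hand. Instead I would identify $\spand_m(\CC)$ with the $m$-semiadditive envelope obtained by freely adjoining $m$-finite colimits to $\CC$ (the presheaf-style completion $\mathcal{S}_{m/\CC}$) and then passing to spans. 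That reduces the universal property to Harpaz's Theorem 4.1 applied fibrewise over $\CC$.
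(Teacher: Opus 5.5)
The paper gives no proof of this statement. It is quoted from Harpaz, and even the paper's own Proposition~\ref{prop:universal} is only checked on objects and scalars, so your outline has to stand on its own.

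\textbf{What works.} Your first steps are sound. $m$-finite limits and colimits in $\spand_m(\CC)$ have the same underlying object $\int_A$, with an identity-type span as norm. Every $(X,X\to\CC)$ is the colimit of a diagram that factors through $\CC$. For uniqueness you also need that $m$-finite-colimit-preserving functors between $m$-semiadditive categories preserve norms, hence $m$-finite limits. Only that forces the images of backward legs; knowing that objects are colimits of objects of $\CC$ pins down only forward maps and $2$-cells.

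\textbf{The gap: existence.} Your $\widetilde F$ is defined on objects and on individual spans, and Beck--Chevalley for the norm gives compatibility with composition up to homotopy. That yields at most a functor on homotopy categories. It does not give an $m$-finite-colimit-preserving functor of $\infty$-categories natural in $F$, let alone an inverse to the restriction $\Fun^{m\text{-sum}}(\spand_m(\CC),\DD)\to\Fun(\CC,\DD)$.

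\textbf{Your proposed repair does not close this.} $\spand_m(\CC)$ is not fibred over $\CC$: a single decorated span relates different points of $\CC$ through its legs and through $\rho$, so there is nothing to apply Theorem~4.1 to `fibrewise'. Applied to the $m$-semiadditive category $\Fun(\CC,\DD)$, Theorem~4.1 gives only a functor $\spand_m\times\CC\to\DD$, colimit-preserving in the first variable. That is undecorated spans acting on constant decorations.

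The case $\CC=\P$ shows what is lost. $\P$ has a single object, so a fibrewise application returns only $\lin\to\vect$, the unsigned cardinality. It never sees the signs $\rho$, nor the vanishing on $BG\to\P$ for $G$ with odd elements that Proposition~\ref{prop:universal} extracts. Getting from $\spand_m\times\CC$ to $\spand_m(\CC)$ amounts to identifying $\spand_m(\CC)$ with $\spand_m\otimes\mathcal{P}_m(\CC)$. Here $\mathcal{P}_m(\CC)$ is the free $m$-finite-colimit completion of $\CC$: pairs $(A,A\to\CC)$ with lax forward maps. That identification is essentially the theorem itself.

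\textbf{What is needed instead} is a genuinely coherent construction, for instance the following:
\begin{itemize}
\item For $m$-semiadditive $\DD$, take the restriction functors $f^*$ between the categories $\Fun(A,\DD)$, for $A$ $m$-finite. These satisfy $f_!\simeq f_*$ by ambidexterity and satisfy base change.
\item Assemble them into a functor on spans of $\DD$-valued local systems, i.e.\ an ambidextrous Beck--Chevalley fibration.
\item Precompose with $(A,c)\mapsto(A,F\circ c)$ and postcompose with $\operatorname{colim}_A$.
\end{itemize}
This produces $\widetilde F$ for all $F$ simultaneously and coherently.
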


We do not wish to spell out the details of the statement, nor the definition of 
$\CC$-decorated span, but only note that when $\CC$ is itself an $m$-groupoid, then
$\CC$-decorated span just means span in the slice category over $\CC$.
The theorem can now be applied to the case of the present paper, where $m=1$ and
$\CC=\P$, and we get the following result, which is thus a special case of
Harpaz's general theorem:

\begin{prop}
  The category $\linpm$ is the free $1$-semiadditive category on $\P$. It means that
  for any $1$-semiadditive category $\DD$ with a functor $\P\to \DD$,
  there is a unique finite-homotopy-sum preserving functor
  \[
\begin{tikzcd}[row sep={3.6em,between origins}, column sep={4.2em,between origins}]
  & \P \ar[ld] \ar[rd] &
  \\
  \linpm \ar[rr, dashed] && \DD
  \end{tikzcd}
\]
\end{prop}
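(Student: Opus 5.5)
The plan is to deduce the statement from Harpaz's Theorem~5.29 with $m=1$ and $\CC=\P$. The work is to identify $\linpm$ with the $\infty$-category of $\P$-decorated spans of $1$-finite $\infty$-groupoids. Since $\P=BO(1)$ is a $1$-groupoid, the remark after that theorem says that $\P$-decorated spans are spans in the slice $\FF_{/\P}=\ZZ$. So I would check that the bicategory $\linpm$, with objects $S\to\P$, morphisms $\P$-spans and $2$-cells equivalences of $\P$-spans, is equivalent to $\spand(\ZZ)$ as an $(\infty,1)$-category.

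The steps, in order:
\begin{enumerate}
\item Objects agree by definition. The task is to compare the mapping groupoids of spans from $S$ to $T$ in $\spand(\ZZ)$ with the groupoids of $\P$-spans and their equivalences.
\item A span in $\ZZ$ is a fully specified span as in \eqref{eq:BZ2span}. Its equivalences are exactly the quadruples $(u,\theta,\sigma,\tau)$ of \ref{morphism-of-spans} with $u$ invertible, since the equations \eqref{eq:two-eqs} are just the compatibility of $2$-cells in the weak slice.
\item Composition in $\spand(\ZZ)$ is homotopy pullback in $\ZZ$. Since $\ZZ\to\FF$ creates homotopy pullbacks, this agrees with the pullback composition of $\P$-spans used for $\linpm$.
\item Identities and the symmetric monoidal structure $\atensor=*$ match.
\item A functor $\P\to\DD$ is the same as a functor out of the decoration category. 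Harpaz's theorem then gives existence and uniqueness, up to contractible choice, of the homotopy-sum-preserving extension along $\P\to\linpm$, $\mathrm{pt}\mapsto\ZZ_{/\e}$, $\pm1\mapsto$ the scalars \eqref{pm1}.
\item One must check that this embedding is the one Harpaz uses. It sends $c\in\CC$ to the span-category object represented by $c\colon 1\to\CC$, here $\e$, and sends an arrow of $\CC$ to the span whose $2$-cell is that arrow, which is exactly \eqref{pm1}.
\end{enumerate}

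I would also record why $\linpm$ is $1$-semiadditive, as a sanity check. Homotopy sums in $\linpm$ are computed in $\ZZ$ as $\int$ over finite groupoids, and spans are self-dual, so colimits and limits over $1$-finite groupoids coincide via the norm map. Harpaz's theorem already contains this, so no separate proof is needed. As a consistency check, I would note that in $\DD=\vect$ this recovers Proposition~\ref{prop:universal}.

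The main obstacle is the truncation bookkeeping between the bicategory $\linpm$ and Harpaz's $\infty$-categorical statement. Harpaz's span category of $1$-finite spaces over $\P$ is a priori an $(\infty,1)$-category whose mapping spaces are groupoids of spans. I must ensure these are $1$-truncated and coincide with our equivalences of $\P$-spans, with no extra higher data from the $2$-cells to $\P$.

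I would first use that $\ZZ$ is $2$-truncated, as noted in the paragraph on the base `topos'. Then the space of spans $S\leftarrow M\to T$ in $\ZZ$ is the core of a $(2,1)$-category whose morphisms are the $(u,\theta,\sigma,\tau)$. Its $2$-morphisms would be modifications, which might give $\pi_2$. Since the statement is about the bicategory with only invertible $2$-cells, I would interpret $\linpm$ through its Duskin nerve and argue the comparison is an equivalence of $(2,1)$-categories. If needed, I would fall back to the $1$-category of equivalence classes of $\P$-spans mentioned in the paper, passing to homotopy categories on both sides.
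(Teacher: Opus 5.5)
Your main line is the paper's. The paper obtains the proposition only by specialising Harpaz's Theorem~5.29 to $m=1$, $\CC=\P$, using that $\P$-decorated spans are spans in $\FF_{/\P}=\ZZ$. Your steps 1--6 just unwind that identification and are fine.

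The problem is your final paragraph, which resolves the truncation issue in the wrong direction. The mapping spaces of $\spand(\ZZ)$ are \emph{not} $1$-truncated. $\Map(\ZZ_{/\e},\ZZ_{/\e})$ is the core of $\ZZ_{/1\times_\P 1}\simeq\FF_{/\{\pp,\mm\}}$, i.e.\ two copies of the core of $\FF$ (cf.\ Proposition~\ref{prop:End=SS}). Take the positive scalar $(BC_2,\emptyset)$. The nontrivial central element of $C_2$ is a self-modification of the identity equivalence, and it is compatible with all the $2$-cells because it is even. So $\pi_2=C_2\neq 0$ there. The modifications you suspected really do contribute: the extra data comes from natural automorphisms of the apex, and the $2$-cells to $\P$ only force these to be even. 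Harpaz's category is therefore a $(3,1)$-category. No equivalence with the Duskin nerve of the bicategory of $\P$-spans and their equivalences can exist.

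Falling back to homotopy categories does not help either. The universal property is about homotopy sums, and these do not survive truncation. Take $G\neq 1$ with trivial parity. In the homotopy category, the trivial $BG$-diagram on $\ZZ_{/\e}$ has colimit $\ZZ_{/\e}$ itself. The canonical cocone $1\leftarrow 1\to BG$, which exhibits $\ZZ_{/BG}$ as the homotopy sum, is not invertible there. Likewise, the identity of $\spand(\ZZ)$ is a homotopy-sum-preserving functor into a $1$-semiadditive target, yet it cannot factor through the homotopy $2$-category, because of the $\pi_2$ above.

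The correct reading is the one the paper uses tacitly, in the same way it identifies its $\lin$ with Harpaz's $\spand_1$. Namely, $\linpm$ \emph{is} the span $\infty$-category $\spand(\ZZ)$, with mapping spaces the cores of $\ZZ_{/S\times_\P T}$, modifications included. The bicategory of $\P$-spans is only its homotopy $2$-category, which is adequate for computations. With this reading the proposition is Harpaz's theorem verbatim. The truncation is also harmless for Proposition~\ref{prop:universal}, since any functor into the $1$-category $\vect$ factors through the homotopy category.
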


Finally, the category $\vect$ of rational vector spaces is $1$-semiadditive
(in fact $m$-semiadditive for all $m\leq \infty$),
and our Proposition~\ref{prop:universal} can be seen as just identifying
what the universal functor $\linpm\to\vect$ is: it is the cardinality functor
introduced in this section.

\section{Exterior powers and determinants}

\label{sec:det}

Throughout we denote by $\Sigma_k$ the symmetric group on $k$ letters.

\subsection{Exterior powers in $\ZZ$}

For $\Sigma_k$ acting on a groupoid $S$, when we say `quotient' it  
always means the 
weak quotient:, 
the objects of the groupoid $S/\Sigma_k$ are the 
objects of $S$, and an arrow in the quotient groupoid 
$S/\Sigma_k$ is by definition a pair $(a:x\to y, \sigma)$ 
which then has 
domain $x$ and codomain $y.\sigma$. 

Recall that a $\Sigma_k$-action on a parity structure $f:S \to 
\P$
is a functor
$$
B\Sigma_k \to \ZZ 
$$
sending the unique point to $f:S\to\P$. This means that
for each $\sigma\in \Sigma_k$ there is given
a morphism of parity structures
\[
\begin{tikzcd}[column sep={2.5em,between origins}]
S \ar[rd] \ar[rr, "\sigma"] & \ar[d, phantom, pos=0.25, 
"\epsilon_\sigma" description] & S \ar[ld]  \\
 & \P &
\end{tikzcd}
\]
with the $2$-cell $\epsilon_\sigma$ being part of the data, and of course it is 
required that $\epsilon_{\sigma\tau} = \epsilon_\sigma \cdot \epsilon_\tau$.
Since $\ZZ$ has finite colimits, we can form the (weak) quotient, 
denoted $S/\Sigma_k$. Its underlying groupoid will always be $S/\Sigma_k$
(since the forgetful functor $\ZZ\to\FF$ preserves colimits).

\begin{blanko}{Example.}
  If $f:S\to\P$ is a parity structure, and if $\Sigma_k$ acts on the underlying groupoid $S$,
  then there are two canonical ways to turn this action into an action on parity
  groupoids: {\em either} we can define $\epsilon$ to be constant $+1$ (that's the
  pure even action), {\em or} we can define $\epsilon$ to be always the sign of the
  permutation $\sigma$ (independently of $s\in S$), with reference to 
  the sign homomorphism $p: \Sigma_k \to O(1)$. This second action, 
  called the {\em signed action},
  will be our focus:
  \[
  \begin{tikzcd}[column sep={2.7em,between origins}, row sep={3.2em,between origins}]
  S \ar[rd] \ar[rr, "\tilde\sigma"] & \ar[d, phantom, pos=0.25, "p(\sigma)" 
  description] & S \ar[ld]  \\
   & \P &
  \end{tikzcd}
  \]
  (We decorate $\sigma$ with a tilde to stress that the action is 
  signed.)
\end{blanko}

\begin{blanko}{The exterior power in $\ZZ$.}
  Presently we are mainly interested in one particular $\Sigma_k$-action:
  namely, for a parity groupoid $X$ we consider the $\Sigma_k$-action on
  $X * \cdots * X$ given by permuting the $k$ factors, and where the
  $2$-cell $\epsilon$ is the signed action. Note that the underlying
  groupoid of $X* \cdots * X$ is the cartesian product $X\times \cdots
  \times X$ which it is useful to write also as $\Map(k,X)$.

  For $f:X \to \P$ in $\ZZ$ and $k\in \N$, we define the $k$th {\em exterior 
  power} 
  \marginpar{\dbend}
  $$
  \Hak k X := X^k/\Sigma_k
  $$
  with structure map given by $X^k/\Sigma_k \to \P^k/\Sigma_k \to \P$.
  Crucially, the last map involves the parity map $p:\Sigma_k \to O(1)$,
  which is where the alternating signs come from. In detail, a morphism
  in $X^k/\Sigma_k$ from $(y_1,\ldots,y_k)$ to $(x_1,\ldots,x_k)$
  consists of a permutation $\sigma \in \Sigma_k$ and $k$ morphisms $\gamma_j
  \colon y_j \to x_{\sigma(j)}$ in $X$. Its parity is $p(\sigma) f(\gamma_1)
  \cdots f(\gamma_k) \in O(1)$.

  It looks like we are defining the symmetric power, not the exterior
  power, but we shall quickly see that it is a reasonable definition. A
  first reassuring observation is that it has the correct `dimension':  
\end{blanko}
  
\begin{blanko}{`Dimension' of $\Hak k X$.}
  If $X$ has $n$ (orientable) components, and if we use the parity
  groupoid $\Hak k X$ to form the slice $\ZZ_{/\Hak k X}$ as a model for
  a vector space, then according to \ref{card-slice}, that vector space is
  $\Q_{\pi_0(\Hak k X)^\circ}$, spanned by the orientable components of
  $\Hak k X$. But among the maps $\phi \in \Map(k,X)/k!$, all
  non-injective maps have an odd automorphism: if two elements in $k$ are
  mapped to the same point in $X$ (or just the same component), then the
  transposition of those two elements will constitute an odd automorphism
  of $\phi$. Conversely, if all elements are mapped to distinct connected
  components, then only the trivial permutation can fix $\phi$. So the
  orientable part of $\Map(k,X)/k!$ is $\Inj(k,X^\circ)/k!$. Note that
  the action of $\Sigma_k$ on $\Inj(k,X^\circ)$ is free, so the groupoid
  $(\Hak k X)^\circ = \Inj(k,X^\circ)/k!$ has ${n \choose k}$ components,
  as expected from a notion of exterior power.
\end{blanko}

\begin{blanko}{Remark.}
  It is of course the intention that the elements of the groupoid $\Hak k
  X$ should be denoted $x_1 \wedge \cdots \wedge x_k$, but in the present
  paper, where we are still concerned with setting up the theory, we find
  it more practical to stick with tuple notation $(x_1,\ldots,x_k)$.
\end{blanko}

\begin{blanko}{Example: Exterior powers of a discrete set.}\label{discrete-ext}
  If $X$ is just a discrete set of cardinality $n$, then for each $k\in \N$ the exterior 
  power $\Hak k X$ is a parity groupoid (not a discrete one), but its 
  orientable part is (homotopy) discrete. Indeed, the orientable points are tuples 
  $\underline x = (x_1,\ldots,x_k)$ of {\em distinct} points, so the 
  automorphism group of $\underline x$ is trivial.
\end{blanko}

\begin{prop}
  The exterior power $\Hak k X$ is the weak quotient of the signed permutation action 
  of $\Sigma_k$ on $X^k$, in other words the
  (homotopy) colimit of the corresponding functor
  $B\Sigma_k \to \ZZ$.
\end{prop}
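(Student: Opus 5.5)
The plan is to unwind both sides explicitly and compare them. The colimit of a functor $B\Sigma_k \to \ZZ$ is computed in two steps, as explained under \emph{Weak quotients} in \ref{sub:group-act}. First take the weak quotient of the underlying groupoid. This is legitimate because the forgetful functor $\ZZ\to\FF$ preserves colimits: $\ZZ=\FF_{/\P}$ is a slice, and colimits in a slice are created by the forgetful functor. Then equip that quotient with the parity structure in which old arrows keep their parity and a new arrow $y \to y.\sigma$ gets parity $(\epsilon_\sigma)_y$. So the proof reduces to checking that this recipe, applied to the signed permutation action on $X^k = X*\cdots*X$, reproduces exactly the structure map $X^k/\Sigma_k \to \P^k/\Sigma_k \to \P$ used in the definition of $\Hak k X$.

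\textbf{Step 1: the underlying groupoid.} The underlying groupoid of $X*\cdots*X$ is $\Map(k,X)$. The permutation action is strict, so the weak quotient groupoid is $\Map(k,X)/\Sigma_k$. Its arrows from $(y_1,\dots,y_k)$ to $(x_1,\dots,x_k)$ are pairs consisting of a tuple of arrows of $X$ (an old arrow) and a permutation $\sigma$ (a new arrow). This is the groupoid $X^k/\Sigma_k$ of the definition, with morphisms $(\sigma,\gamma_1,\dots,\gamma_k)$, $\gamma_j\colon y_j\to x_{\sigma(j)}$. I will fix the convention relating $\tilde\sigma$ to the reindexing $j\mapsto\sigma(j)$. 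Either convention works, since $p(\sigma)=p(\sigma^{-1})$.

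\textbf{Step 2: parities.} In the colimit parity structure, the old arrow $(\gamma_1,\dots,\gamma_k)$ has parity $f(\gamma_1)\cdots f(\gamma_k)$, because that is the convolution structure on $X*\cdots*X$. The new arrow has parity $(\epsilon_\sigma)_y = p(\sigma)$ by definition of the signed action. Composing gives $p(\sigma)f(\gamma_1)\cdots f(\gamma_k)$, which is precisely the parity assigned in the definition of $\Hak k X$. So the two parity structures agree arrow by arrow, and hence as functors to $\P$ (strictly, so the comparison $2$-cell is the identity).

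\textbf{Step 3: the universal property.} It remains to confirm that this really is the colimit in $\ZZ$ and not just a parity structure on the right groupoid. For this I will check the universal property directly. A cocone consists of a morphism $(u,\theta)\colon X^k\to (T\to\P)$ in $\ZZ$ together with coherent $2$-cells $u\circ\tilde\sigma\Rightarrow u$ compatible with the $2$-cells $\epsilon_\sigma=p(\sigma)$. Such a cocone is the same thing as a functor out of the quotient groupoid (forced on new arrows) together with a natural transformation to $\P$. Naturality with respect to the new arrows holds exactly because the compatibility condition reads $\theta_{y}\cdot(\text{parity of new arrow}) = \theta_{y.\sigma}$ with new-arrow parity $p(\sigma)$.

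\textbf{Main obstacle.} I expect this $2$-cell bookkeeping to be the main (though routine) difficulty. I would streamline it by quoting the general description of weak quotients in $\ZZ$ from \ref{sub:group-act}, so that only Steps 1 and 2 need to be carried out explicitly.
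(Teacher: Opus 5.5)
Your proof is correct, but its main line differs from the paper's. You obtain the parity structure on the colimit from the general fact that the projection $\ZZ=\FF_{/\P}\to\FF$ creates colimits. The colimit in $\ZZ$ is then the weak quotient groupoid $\Map(k,X)/\Sigma_k$, together with the functor to $\P$ induced by the cocone $(f,\epsilon_\sigma)$ through the universal property of the weak quotient in $\FF$. Your Step 2 computes that induced functor (old arrows go to $f(\gamma_1)\cdots f(\gamma_k)$, new arrows go to $p(\sigma)$) and observes that it is exactly the structure map in the definition of $\Hak k X$.

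The paper's proof does not reduce to the slice fact, nor to the `Weak quotients' recipe of Section~\ref{sub:group-act}, which is stated there without proof. Instead it verifies the universal property in $\ZZ$ by hand:
\begin{itemize}
\item $q$ with trivial $2$-cell, together with the permutation arrows, forms a cocone over $\P$;
\item for a challenger $(r,\rho,\beta_\sigma)$, the comparison $u$ is forced on objects and old arrows because $q$ is the identity on objects;
\item $u$ is forced on new arrows to be the components of $\beta_\sigma$, and its $2$-cell to $\P$ is dictated by $\rho$;
\item the remaining slack, when the triangles only commute up to $2$-cells, is contractible.
\end{itemize}
Your Step 3 is a compressed form of that argument, and once you have invoked creation of colimits it is logically redundant. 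Your route is shorter and shows that the only parity-specific content is computing one induced functor into $\P$. The paper's route is self-contained and makes every $2$-cell over $\P$ visible, which is its stated purpose.

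One correction to Step 3. Call the cocone $2$-cells $\beta_\sigma$. Their compatibility with $\epsilon_\sigma$ over $\P$ is not $\theta_y\cdot p(\sigma)=\theta_{y.\sigma}$. It must also include the parity in $T$ of the component $(\beta_\sigma)_y$, which is the image of the new arrow under the induced functor. The correct condition is $\theta_{y.\sigma}=\theta_y\cdot p(\sigma)\cdot\pari\bigl((\beta_\sigma)_y\bigr)$, and this is exactly naturality of $\theta$ at the new arrow. Your version already fails for the tautological cocone $q\colon X^k\to\Hak k X$ with trivial $2$-cell, where it would force $p(\sigma)=+1$.
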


(For contrast, the (homotopy) colimit of $B\Sigma_k \to \ZZ$
corresponding to the straight permutation action of $\Sigma_k$ on $X^k$
is the ordinary weak quotient $X^k/k!$ with the pure-even parity
structure. This is the {\em symmetric power} of $X$.)

\bigskip

The following proof is not very different from the standard proof that
the weak quotient is the homotopy colimit in the category of groupoids.
We give the details because we are keen on being very careful with the
parity structures.

\begin{proof}
  First of all we check that $\Hak k X$ is a cocone under the action.
  At the level of underlying groupoids, it is clear that we have the required 
  $2$-cells
  \[
  \begin{tikzcd}[column sep={4.8em,between origins}, row sep={2.7em,between origins}]
    X^k  \ar[dd, "\tilde\sigma"'] \ar[rd, "q"]  & \\
    \ar[r, phantom, pos=0.4, "p(\sigma)" description] & \Hak k X \\
    X^k \ar[ru, "q"'] &
  \end{tikzcd}
  \]
  because the triangle does not commute, but the permutation preventing it from 
  commuting is precisely available as a morphism in $\Hak k X$.
  It remains to check that this is over $\P$: for this we should first specify that 
  the quotient map $q: X^k \to \Hak k X$ comes with trivial $2$-cell. 
  The required equation is now
  \[
  \begin{tikzcd}[column sep={3.0em,between origins}, row sep={2.7em,between origins}]
    X^k  \ar[dd, "\tilde\sigma"'] \ar[rr, "q"]  && \Hak k X \ar[dd] \\
	 & \ar[lu, phantom, pos=0.4, "(\sigma)" description]\ar[rd, phantom, pos=0.4, 
	 "\commutes" description]& \\
    X^k \ar[rruu] \ar[rr] && \P
  \end{tikzcd}
  \quad = \quad
  \begin{tikzcd}[column sep={3.0em,between origins}, row sep={2.7em,between origins}]
    X^k  \ar[dd, "\tilde\sigma"'] \ar[rr, "q"] \ar[rrdd] && \Hak k X \ar[dd] \\
	 & 
	 \ar[ld, phantom, pos=0.4, "p(\sigma)" description]\ar[ru, phantom, pos=0.4, 
	 "\commutes" description]
	 & \\
    X^k  \ar[rr] && \P
  \end{tikzcd}
  \]
  which is clear.  Consider now a challenger cocone under the action, so a 
  \[
\begin{tikzcd}[column sep={2.5em,between origins}]
X^k \ar[rd] \ar[rr, "r"] & \ar[d, phantom, pos=0.35, "\rho" description] & C \ar[ld]  \\
 & \P &
\end{tikzcd}
\]
  with a
  compatible system of
  \[
  \begin{tikzcd}[column sep={4.8em,between origins}, row sep={2.7em,between origins}]
    X^k  \ar[dd, "\tilde\sigma"'] \ar[rd, "r"]  & \\
    \ar[r, phantom, pos=0.4, "\beta_\sigma" description] & C \\
    X^k \ar[ru, "r"'] &
  \end{tikzcd}
  \]
  This has to be over $\P$, of course, meaning that we have 
  \[
  \begin{tikzcd}[column sep={3.0em,between origins}, row sep={2.7em,between origins}]
    X^k  \ar[dd, "\tilde\sigma"'] \ar[rr, "r"]  && C \ar[dd] \\
	 & \ar[lu, phantom, pos=0.4, "\beta_\sigma" description]\ar[rd, phantom, pos=0.4, 
	 "\rho" description]& \\
    X^k \ar[rruu] \ar[rr] && \P
  \end{tikzcd}
  \quad = \quad
  \begin{tikzcd}[column sep={3.0em,between origins}, row sep={2.7em,between origins}]
    X^k  \ar[dd, "\tilde\sigma"'] \ar[rr, "r"] \ar[rrdd] && C \ar[dd] \\
	 & 
	 \ar[ld, phantom, pos=0.4, "p(\sigma)" description]\ar[ru, phantom, pos=0.4, 
	 "\rho" description]
	 & \\
    X^k  \ar[rr] && \P
  \end{tikzcd}
  \]  
  The statement is now that there is a unique $u: \Hak k X \to C$ filling
  \begin{equation}\label{unique-u}
    \begin{tikzcd}
      X^k \ar[rrd, "r", bend left] \ar[rd, "q"] \ar[dd, "\tilde\sigma"']  &&
      \\
      \ar[r, phantom, "p(\sigma)" description]& \Hak k X \ar[r, dotted, "u"]  & C 
      \\
      X^k \ar[rru, "r"', bend right] \ar[ru, "q"']& 
    \end{tikzcd}
  \end{equation}
  to give $\beta_\sigma$. We can arrange for the two triangles to commute
  strictly, and there is a truly unique $u$ to achieve this: on objects the
  values of $u$ are forced because $q$ is the identity on objects. For the same
  reason the value of $u$ on `old' arrows is forced. For the `new' arrows, observe
  that each appears precisely once as a component of the natural
  transformation $p(\sigma)$, so the $u$-value on such a new arrow
  is therefore forced to be the corresponding
  component of $\beta_\sigma$. The $2$-cell
    \[
  \begin{tikzcd}[column sep={2.5em,between origins}]
  \Hak k X \ar[rd] \ar[rr, "u"] & \ar[d, phantom, pos=0.35, "?" description] & C 
  \ar[ld]  \\
   & \P &
  \end{tikzcd}
  \]
  `hanging under' $u$ is uniquely determined by $\rho$. If we allow the
  two triangles in \eqref{unique-u} to be not strictly commutative but
  carry instead a $2$-cell, then there is some slack in the definition of
  $u$, as we can shift such $2$-cells from one triangle to the other, but
  the different choices of $u$ are related by uniquely defined $2$-cells,
  so the space of choices is still contractible. (In the details of this
  argument, it is again important that $q$ is the identity on objects.)
\end{proof}

\begin{blanko}{Definition.}
  A cocone 
  \[
  \begin{tikzcd}[column sep={2.5em,between origins}]
  X^k \ar[rd] \ar[rr, "r"] & \ar[d, phantom, pos=0.35, "\rho" description] & C \ar[ld]  \\
   & \P &
  \end{tikzcd}
  \]
  like in the proof is called an {\em alternating map}.
\end{blanko}
 The description of $\Hak k X$ as a colimit now reformulates to:
\begin{cor}
  The quotient map $q: X^k \to \Hak k X$ is the universal alternating map.
\end{cor}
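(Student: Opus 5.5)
The plan is to read this off directly from the preceding Proposition, since the Definition has just been arranged so that \emph{alternating map} is synonymous with \emph{cocone under the signed permutation action} $B\Sigma_k\to\ZZ$ on $X^k$. Concretely, an alternating map consists of a morphism $(r,\rho)\colon X^k\to C$ in $\ZZ$ together with a compatible system of $2$-cells $\beta_\sigma\colon r\Rightarrow r\circ\tilde\sigma$ satisfying the displayed equation over $\P$, which is exactly the requirement that $\beta_\sigma$ be a $2$-cell in $\ZZ$ relating $r$ and $r\circ\tilde\sigma$, where $\tilde\sigma$ carries the $2$-cell $p(\sigma)$. I would first state this identification explicitly, including the compatibility $\beta_{\sigma\tau}=\beta_\sigma\cdot\beta_\tau$ (suitably whiskered), so that the groupoid of alternating maps out of $X^k$ with target $C$ is precisely the groupoid of cocones with vertex $C$.

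Next I would observe that the quotient map $q\colon X^k\to\Hak k X$ with trivial $2$-cell down to $\P$, together with the $2$-cells $p(\sigma)$ exhibited in the first half of the proof of the Proposition, is itself an alternating map. Universality then means that precomposition with $q$ induces an equivalence between morphisms $\Hak k X\to C$ in $\ZZ$ and alternating maps $X^k\to C$. This is exactly the colimit property established in the second half of that proof: for any challenger $(r,\rho,\beta)$ there is a $u$, unique up to a contractible space of choices, with $u\circ q=r$, with its hanging $2$-cell determined by $\rho$, and with $u$ whiskered with $p(\sigma)$ equal to $\beta_\sigma$.

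The only point requiring care, and hence the main obstacle, is making sure that \emph{universal} is understood homotopically. Uniqueness must hold up to unique $2$-cell rather than strictly, and the correspondence must also be bijective on $2$-cells between alternating maps. For the latter I would argue as in the Proposition: $q$ is the identity on objects, so a $2$-cell between two induced maps $u,u'$ is determined by its components at the objects of $X^k$. Naturality with respect to the new arrows is then precisely the condition that such a family of components be a morphism of cocones, that is, compatible with the $\beta_\sigma$. With this observation in place the Corollary is a verbatim restatement of the Proposition.
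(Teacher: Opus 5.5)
Your proposal is correct and matches the paper, which gives no separate proof: it presents the Corollary as a direct reformulation of the preceding Proposition ($\Hak k X$ is the weak quotient of the signed permutation action), with the Definition identifying alternating maps with cocones under that action. Your remarks on uniqueness up to a contractible space of choices and on the correspondence of $2$-cells only make explicit what the end of the Proposition's proof already sketches.
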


\begin{blanko}{Remark.}
  For vector spaces, a key property of the exterior powers is that if
  there is a repeated factor, then the exterior power is zero, say
  $v\wedge v = 0$ (and sometimes this is taken as the definition of
  exterior power). At the objective level, `zero' is not too reliable a
  concept, but it is clear that $(x,x) \in \Hak 2 X$ is a non-orientable
  point, as it has an odd automorphism. So while not exactly `zero', it
  does disappear after taking cardinality.
\end{blanko}

\subsection{Exterior powers in $\linpm$}

The goal of this subsection is to show that the universal property of 
$\Hak k X$ in $\ZZ$ extends to a universal property of 
$$
\Hak k (\ZZ_{/X}) :=
\ZZ_{/\Hak k X}
$$
in $\linpm$. In other words, the universal property holds also for
alternating maps given by spans (rather than just the pure-forward maps
in $\ZZ$ it was designed for). We first need to define alternating linear
functors.

\begin{blanko}{Alternating linear functors.}
  A linear functor $\ZZ_{/X} \atensor \cdots \atensor \ZZ_{/X} \to \ZZ_{/W}$
  is called {\em alternating} when it is equipped with, for each $\sigma\in 
  \Sigma_k$,
  a
  compatible system of $2$-cells
  \[
  \begin{tikzcd}[column sep={8em,between origins}, row sep={2.7em,between origins}]
    \ZZ_{/X} \atensor \cdots \atensor \ZZ_{/X}  \ar[dd, 
    "\tilde\sigma\lowershriek"'] \ar[rd]  & \\
    \ar[r, phantom, pos=0.4, "\alpha_\sigma" description] & \ZZ_{/W} \\
    \ZZ_{/X} \atensor \cdots \atensor \ZZ_{/X}  \ar[ru] &
  \end{tikzcd}
  \]
  This has to be over $\P$, of course. The compatibility is just 
  $\alpha_\sigma \alpha_t = \alpha_{\sigma \tau}$.
  
  At the level of spans, this means that we have a span
  $$
  X^k \stackrel{s}\longleftarrow A \stackrel{t}\longrightarrow W
  $$
  and compatible $2$-cells
  \[
  \begin{tikzcd}[column sep={4.8em,between origins}, row sep={2.7em,between origins}]
    X^k  \ar[dd, "\tilde\sigma"'] & \ar[l, "s"'] A \ar[rd, "t"] &
    \\
    \ar[rr, phantom, pos=0.42, "\alpha_\sigma" description] && W \\
    X^k  & \ar[l, "s"] A \ar[ru, "t"'] &
  \end{tikzcd}
  \]
  By the way composition of spans is defined, we can arrange this as a 
  pullback square and a triangle $2$-cell
  \[
  \begin{tikzcd}[column sep={5.2em,between origins}, row 
    sep={2.7em,between origins}]\label{tilde-sigma}
    X^k  \ar[dd, "\tilde\sigma"'] & \ar[l, "s"'] A \ddlpullback 
    \ar[dd, "\tilde\sigma"'] \ar[rd, "t"] &
    \\
    & \ar[r, phantom, pos=0.4, "\alpha_\sigma" description] & W \\
    X^k  & \ar[l, "s"] A \ar[ru, "t"'] &
  \end{tikzcd}
  \]
  The totally of the maps $A\to A$ constitute a $\Sigma_k$-action on $A$.
  These maps have been denoted by $\tilde\sigma$ again, because the action is the 
  pullback along $s$ of the signed action on $X^k$. Note also that the data $(t,\alpha_\sigma)$ 
  exhibits $W$ as a cocone under this action.

  Having defined alternating linear maps, we are ready to state the 
  universal property:
\end{blanko}

\begin{prop}
  The quotient map
  $$
  q\lowershriek : \ZZ_{/X} \atensor\cdots\atensor \ZZ_{/X} 
  \longrightarrow \Hak k (\ZZ_{/X})
  $$
  is the universal alternating linear map. That is, any other 
  alternating linear map $\ZZ_{/X} \atensor\cdots\atensor \ZZ_{/X}  
  \longrightarrow \ZZ_{/W}$
  factors uniquely as
  \[
  \begin{tikzcd}
  \ZZ_{/X} \atensor\cdots\atensor \ZZ_{/X} \ar[d, "q\lowershriek"'] 
  \ar[rr] && \ZZ_{/W}  \\
    \Hak k (\ZZ_{/X})  \ar[rru, dotted, "u"'] && 
  \end{tikzcd}
  \]
  (by a linear map $u$).
\end{prop}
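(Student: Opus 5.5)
Given an alternating linear map, presented as in the discussion above by a span $X^k \stackrel{s}\leftarrow A \stackrel{t}\rightarrow W$ over $\P$ with compatible $2$-cells $\alpha_\sigma$, the plan is to build the factorisation $u$ as a span out of $\Hak k X$ by taking a quotient of $A$. As noted above, the $\alpha_\sigma$ equip $A$ with the $\Sigma_k$-action pulled back along $s$ from the signed action on $X^k$, and $(t,\alpha_\sigma)$ exhibits $W$ as a cocone under this action. Since the action on $A$ is a pullback of the signed action, the map $s$ is $\Sigma_k$-equivariant over $\P$. Taking weak quotients (which exist in $\ZZ$ and are computed on underlying groupoids) therefore gives a candidate $\P$-span
$$
\Hak k X = X^k/\Sigma_k \stackrel{\bar s}\longleftarrow A/\Sigma_k \stackrel{\bar t}\longrightarrow W .
$$
Here $\bar s$ is the induced map on quotients, and $\bar t$ is the map out of $A/\Sigma_k$ provided by the universal property of the weak quotient, exactly as in the proof that $\Hak k X$ is the colimit of the signed action. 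The $2$-cells down to $\P$ descend because all the compatibilities were already required in the definition of alternating map. In checking this I will follow the careful parity bookkeeping of \ref{act-on-spans}: the action on $A$ must be an action of $\P$-spans from $X^k$ to $W$, which is precisely equation \eqref{eq:span-act} with $\sigma_g=\tilde\sigma$ and $\tau_g=\alpha_\sigma$.

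Next I verify that $u\circ q\lowershriek$ recovers the given functor. The key fact is that the square formed by $A \to A/\Sigma_k$ over $X^k \to X^k/\Sigma_k$ is a homotopy pullback. This holds because $A \to X^k$ is obtained by pullback and the weak quotient of a pulled-back action is again a pullback: both vertical maps are $\Sigma_k$-torsor-like covers with the same fibres, a standard descent fact for groupoids. Granting this, the composite of the span $X^k \stackrel{=}\leftarrow X^k \stackrel{q}\to \Hak k X$ with $(\bar s,\bar t)$ is $X^k \leftarrow A \to W$, and the resulting $2$-cells match $\rho$ because the quotient map $A\to A/\Sigma_k$ carries trivial $2$-cell. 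Beck--Chevalley and Lemma~\ref{lem:equivalent-spans} then give the equivalence of functors, compatibly with the $\alpha_\sigma$.

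For uniqueness, let $\Hak k X \leftarrow B \to W$ be any span whose composite with $q\lowershriek$ is equivalent, as an alternating map, to $A$. Then $A \simeq X^k\times_{\Hak k X} B$ equivariantly, so $B \simeq A/\Sigma_k$, again by descent along the cover $X^k\to\Hak k X$. The equivalence is forced to respect the $2$-cells, because the $\alpha_\sigma$ are matched. The main obstacle is this descent step: showing that pulling back along $q$ is an equivalence between spans out of $\Hak k X$ and $\Sigma_k$-equivariant spans out of $X^k$, \emph{including} the parity $2$-cells. I would prove it by writing $\Hak k X$ as the colimit $\int_{B\Sigma_k}X^k$ and using that pullback in $\FF_{/\P}$ commutes with colimits (descent in $\ZZ$). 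Then I would track signs as in Proposition~\ref{prop:MjN}, where the signed actions on the two sides cancel to give a genuine action of $\P$-spans.
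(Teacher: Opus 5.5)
Your proposal follows essentially the same route as the paper: form $\bar A = A/\Sigma_k$ for the action pulled back from the signed action on $X^k$, get the pullback square over $q$ by descent in $\ZZ$ (the paper does this via the simplicial groupoids $\Sigma_k{}^\bullet\times A \to \Sigma_k{}^\bullet\times X^k$, Thomason's theorem and Lurie's descent theorem), obtain $\bar t$ from the universal property of the quotient, and deduce uniqueness from universality of colimits. The one slip is your appeal to \eqref{eq:span-act}: the action on $A$ covers $\tilde\sigma$ on $X^k$ rather than fixing $X^k$, so it is not an action of $\P$-spans with fixed ends. No such check is needed, however, nor any sign tracking as in Proposition~\ref{prop:MjN}, because all the data live in $\ZZ$ and the argument uses only universal properties of $\ZZ$.
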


\begin{proof}
  All the input data are maps inside $\ZZ$, although we have suppressed
  the structure maps to $\P$ in the notation. The construction exploits
  only universal properties holding in $\ZZ$, so it is not necessary to
  check any compatibility over $\P$. Given a span (that is, a $\P$-span)
  $X^k \stackrel s \leftarrow A \stackrel t \rightarrow W$ we need to
  exhibit (uniquely) the data
  \begin{equation}\label{barA}
  \begin{tikzcd}[column sep={5.2em,between origins}, row sep={2.7em,between origins}]
    X^k  \ar[dd, "q"'] & \ar[l, "s"'] A \ddlpullback 
    \ar[dd, dotted] \ar[rd, "t"] &
    \\
    & \ar[r, phantom, pos=0.4, "\alpha_\sigma" description] & W \\
    \Hak k X  & \ar[l, dotted, "\bar s"] \bar A \ar[ru, dotted, "\bar t"'] &
  \end{tikzcd}
  \end{equation}
  We put $\bar A := A/\Sigma_k$, with reference to the action induced by
  pullback from the signed permutation action on $X^k$ as in the
  definition of alternating functor~\eqref{tilde-sigma}. It means that
  $\bar A$ is the geometric realisation of a simplicial groupoid,
  $\Sigma_k{}^\bullet \times A$, just like $\Hak k X$ is the geometric
  realisation of the simplicial groupoid $\Sigma_k{}^\bullet \times X^k$,
  and the two fit together into a diagram
  \[
  \begin{tikzcd}[column sep={10em,between origins}, row sep={5em,between origins}]
    \ar[d, phantom, "\vdots" description]& \ar[d, phantom, "\vdots" 
    description] \\
    \Sigma_k{} \times \Sigma_k{} \times X^k \ar[d] \ar[d, shift left=2, "\text{act\phantom{j}}"]\ar[d, shift 
  right=2, "\text{proj}"'] & \Sigma_k{} \times \Sigma_k{} \times A 
  \dlpullback
  \ar[l, "\id\times\id\times s"] \ar[d] \ar[d, shift left=2, "\text{act\phantom{j}}"]\ar[d, shift 
  right=2, "\text{proj}"']\\
  \Sigma_k{} \times X^k \ar[d, shift left, "\text{act\phantom{j}}"]\ar[d, shift 
  right, "\text{proj}"'] & 
  \Sigma_k{} \times A \dlpullback \ar[d, shift left, "\text{act\phantom{j}}"]\ar[d, shift 
  right, "\text{proj}"'] \ar[l, "\id\times s"']  \\
  X^k \ar[d, dotted, "q"'] & 
  A \ar[d, dotted] \ar[l, "s"']  \\
  \Hak k X & \bar A \ar[l, dotted, "\bar s"]
  \end{tikzcd}
  \]
  All the squares in the simplicial map are pullback squares, by 
  construction of the action on $A$. We invoke descent to conclude that also
  the square to the geometric 
  realisations are pullbacks: by Thomason's 
  hocolimit theorem~\cite{Thomason:hocolims}, the weak quotient is also 
  an $\infty$-categorical colimit, so it follows from descent 
  \cite[Thm.6.1.0.6]{Lurie:HTT} in the $\infty$-topos of spaces over $\P$ that the 
  square with the colimits is a pullback too, since all the squares in the 
  simplicial map are.
  We have now constructed the 
  pullback square in \eqref{barA}. The triangle follows from the 
  universal property of $\bar A$: since the notion of alternating functor
  involved the condition that $W$ is a cocone under the 
  $\Sigma_k$-action on $A$, the universal property of the quotient $\bar 
  A$ delivers the map $\bar t$ together with the triangle $2$-cells 
  $\alpha_\sigma$, and this choice is unique.
  In fact also the choice of $\bar A$ was forced: the 
  square in \eqref{barA} has to be a pullback, and since by the 
  definition of alternating linear functor also all the squares in the 
  simplicial map are pullbacks, it follows from universality of colimits 
  (the fact that $\ZZ$ is locally cartesian closed) that $\bar A$ has to 
  be the colimit.
\end{proof}

\begin{blanko}{Functoriality of exterior powers.}
  The exterior power of a $\P$-span is just given component-wise: for
  \[
  \begin{tikzcd}[row sep={2em,between origins}]
    & M \ar[dd, phantom, "\epsilon" description] \ar[ld] \ar[rd] &  \\
    S \ar[rd] && T \ar[ld] \\
    & \P &
  \end{tikzcd}
  \]
  the $k$th exterior power is
  \begin{equation}\label{k-power-def}
  \begin{tikzcd}[row sep={2em,between origins}]
     & M^k/\Sigma_k \ar[dd, phantom, "\epsilon^k/\Sigma_k" description] \ar[ld] \ar[rd] &  \\
    S^k/\Sigma_k \ar[rd] && T^k/\Sigma_k \ar[ld] \\
    & \P^k/\Sigma_k \ar[dd] & \\
    &&\\
    & \P &
  \end{tikzcd}
  \end{equation}
  where the last parity map $\P^k/\Sigma_k$ is induced by $p:\Sigma_k\to 
  O(1)$.
  This is compatible with composition of spans, because the functor
  $\Grpd \to \Grpd$, $X \mapsto X^k/\Sigma_k$ preserves pullbacks
  (cf.~\cite{Gepner-Haugseng-Kock:1712.06469}).
\end{blanko}

\subsection{Determinants}

We will define the determinant of an endo-span as the top exterior power 
(with respect to its domain). 

\begin{blanko}{Set-up and notation.}
  Throughout this subsection, we fix a groupoid $X$ with $n$ orientable 
  components. We fix a basepoint $x_i$ in each orientable component, and form
  the tuple
  $$
  \underline x = (x_1,\ldots,x_n) .
  $$
\end{blanko}

\begin{blanko}{Top exterior powers.}
  With notation as above, the {\em top exterior power} $\Hak n X$ has as
  elements $n$-tuples of elements in $X$. As soon as any entry in the 
  $n$-tuple has an odd automorphism, then the whole $n$-tuple will have an
  odd automorphism in $\Hak n X$, and if there is a pair of isomorphic 
  elements in the $n$-tuple, an odd automorphism is given by 
  transposition. It follows that the orientable points in $\Hak n X$ are
  precisely the $n$-tuples of orientable points that are furthermore
  pairwise non-isomorphic.

  The $n$-tuple $\underline x=(x_1,\ldots,x_n)$ is an orientable point in
  $\Hak n X$, and every orientable point is isomorphic to it. The
  description of $\Hak n X$ as a quotient of a $\Sigma_n$-action gives a
  precise description of these isomorphisms: If $(y_1,\ldots,y_n) \simeq
  (x_1,\ldots,x_n)$ is an isomorphism in $\Hak n X$, then since the
  individual entries in these tuples are pairwise non-isomorphic, there
  is a uniquely determined permutation $\sigma \in \Sigma_n$ such that
  $$
  (y_1,\ldots,y_n) 
  \simeq (x_{\sigma 1},\ldots,x_{\sigma n}) 
  \simeq (x_1,\ldots,x_n)
  $$
  where the first part is a tuple of maps in $X$, say $\gamma_i : y_i
  \isopil x_{\sigma i}$, and the second part is the permutation 
  $\sigma^{-1}$.
  
  In the special case where $\underline y = \underline x$, we see that
  the automorphism group of $\underline x$ is the product of the
  automorphism groups of the $n$ points (which are all even by
  assumption). In summary:
\end{blanko}

\begin{lemma}\label{x!}
  Let $X$ be a groupoid with $n$ orientable components, with base points
  $x_1,\ldots,x_n$, respectively. Then the orientable part of the {\em top exterior
  power} $\Hak n X$ is connected and has automorphism group $\underline 
  x!=x_1!\cdots
  x_n!$.
\end{lemma}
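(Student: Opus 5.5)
The proof has three steps: identify the orientable points of $\Hak n X$, show that they form a single component, and compute the automorphism group of $\underline x$. Throughout I use the explicit description of arrows in $X^n/\Sigma_n$ from the definition of the exterior power. An arrow $(y_1,\ldots,y_n)\to(z_1,\ldots,z_n)$ is a pair consisting of a permutation $\sigma$ and arrows $\gamma_j\colon y_j\to z_{\sigma(j)}$, and its parity is $p(\sigma)\prod_j \pari(\gamma_j)$. By Lemma~\ref{lemma:2^k orientations}, a point is orientable if and only if it has no odd automorphisms, so everything reduces to analysing automorphisms in this description.

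\textbf{Step 1: the orientable points.} I claim these are exactly the tuples whose entries are orientable and pairwise non-isomorphic.
\begin{itemize}
\item Suppose some $y_j$ has an odd automorphism $g$. Taking $\sigma=\id$, $\gamma_j=g$ and all other $\gamma_i$ identities gives an odd automorphism of the tuple.
\item Suppose $y_a\cong y_b$ for some $a\neq b$, via an arrow $h$. Take the transposition $\tau=(a\,b)$, with $\gamma_a=h$, $\gamma_b=h^{-1}$ and identities elsewhere. Its parity is $-\pari(h)\pari(h^{-1})=-1$, so this is an odd automorphism.
\item Conversely, suppose the entries are orientable and pairwise non-isomorphic. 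Any automorphism $(\sigma,\gamma)$ needs $y_j\cong y_{\sigma(j)}$ for every $j$, which forces $\sigma=\id$. The $\gamma_j$ are then automorphisms of orientable points, hence even, so the whole automorphism is even.
\end{itemize}

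\textbf{Step 2: connectedness.} Since $X$ has exactly $n$ orientable components, an orientable tuple has exactly one entry in each of them. So there is a unique $\sigma$ with $y_j$ lying in the component of $x_{\sigma(j)}$. Choosing arrows $\gamma_j\colon y_j\to x_{\sigma(j)}$, the pair $(\sigma,\gamma)$ is an arrow $\underline y\to\underline x$. Hence the orientable locus is connected.

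\textbf{Step 3: the automorphism group.} By the same argument as in Step 1, any automorphism of $\underline x$ has $\sigma=\id$. It therefore consists of independent automorphisms $\gamma_j\in x_j!$, and composition in $X^n/\Sigma_n$ restricted to $\sigma=\id$ is componentwise. So $\Aut(\underline x)\cong x_1!\times\cdots\times x_n!=\underline x!$.

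The only delicate point is the parity bookkeeping in Step 1, where the parities $\pari(h)$ and $\pari(h^{-1})$ must cancel. They do, because $h^{-1}$ has the same parity as $h$.
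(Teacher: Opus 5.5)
Your proof is correct and follows essentially the same route as the paper's discussion preceding the lemma. Both characterise the orientable points of $\Hak n X$ as tuples of orientable, pairwise non-isomorphic entries, note that this forces the permutation component of any isomorphism to $\underline x$, and read off connectedness and $\Aut(\underline x)\cong x_1!\times\cdots\times x_n!$. You also spell out two points the paper leaves implicit: the explicit odd automorphism $(\tau,(h,h^{-1}))$ for tuples with two isomorphic entries, and the counting argument that an orientable $n$-tuple meets each of the $n$ orientable components exactly once.
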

\noindent
(Note that if $X$ has no orientable components, then the top exterior 
power is $\Hak 0 X = X^0/0! = 1$.)

\begin{blanko}{Remark.}
  We call it the top exterior power because it is the highest power that 
  has an orientable point. But note that the higher powers are not empty 
  -- they are just not orientable.
\end{blanko}

\begin{blanko}{The determinant.}
  Suppose $X$ is a finite groupoid with $n$ orientable components as
  above. The {\em determinant} of an endo-span $X \leftarrow A \to X$,
  denoted $\Det(\widehat{A})$, is by definition the span
  $$
  \Hak n X \leftarrow \Hak n A \to \Hak n X .
  $$
  (The $2$-cell structure is that of \eqref{k-power-def}.)
\end{blanko}

The fact that all tuples are involved, not just injective tuples,
means that the determinant is a much bigger object (cf.~\ref{immaterial} below) than what might be 
expected from classical linear algebra. In particular it is not just a scalar or a single groupoid.
But since $(\Hak n X)^\circ$ is connected (with basepoint
$\underline x = (x_1,\ldots,x_n)$), the scalar given by the two-sided fibre
$$
{}_{\underline x} (\Hak n A)_{\underline x}
$$
is the only entry in the orientable part of the span, and the 
cardinality of this $1$-by-$1$ matrix is
$$
\norm{\Det \widehat A}/\norm{\underline x!} ,
$$
a rational number. We proceed to show that this rational number agrees
with the ordinary determinant of the cardinality matrix of $\widehat A$.
This will be Corollary~\ref{cor:Det} below.
We actually prove an objective version of the result:

\begin{prop}[Objective Leibniz expansion]\label{Det=det}
  Let $X \stackrel s \leftarrow A \stackrel t \to X$ be an endo-span in 
  $\ZZ$, and let $\underline x = (x_1,\ldots,x_n)$ be a tuple of 
  orientable points, one for each orientable component of $X$. Then 
  there is a canonical equivalences of scalars
  $$
  {}_{\underline x} (\Hak n A)_{\underline x} \simeq 
  \sum_{\sigma\in\Sigma_n} \sign(\sigma) \prod_{i\in n} {}_{x_i} 
  A_{x_{\sigma i}}  .
  $$  
\end{prop}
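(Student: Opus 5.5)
We will compute the two-sided fibre directly, using the strict model of \ref{iMj}. First replace $s$ by a fibration; then $s^n\colon A^n\to X^n$ is a fibration, and so is the induced map $\Hak n A=A^n/\Sigma_n\to \Hak n X$. The points of ${}_{\underline x}(\Hak n A)_{\underline x}$ are therefore pairs $(\underline a,\Gamma)$, where $\underline a=(a_1,\ldots,a_n)\in A^n$ satisfies $s(a_i)=x_i$, and $\Gamma\colon t^n(\underline a)\isopil\underline x$ is an arrow in $\Hak n X$. The sign of such a point is $\epsilon^n_{\underline a}\cdot\pari(\Gamma)=\prod_i\epsilon_{a_i}\cdot\pari(\Gamma)$. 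Arrows are arrows of $\Hak n A$ lying over $\id_{\underline x}$ and compatible with $\Gamma$. Since the $x_i$ are pairwise non-isomorphic, the analysis preceding Lemma~\ref{x!} shows that $\Gamma$ decomposes uniquely as a permutation $\sigma\in\Sigma_n$ together with arrows $\gamma_i\colon t(a_i)\isopil x_{\sigma i}$ in $X$, and that $\pari(\Gamma)=p(\sigma)\prod_i\pari(\gamma_i)$. The permutation $\sigma$ is locally constant on the fibre, because every arrow lies over the identity of $\underline x$, whose automorphisms are the $n$-tuples of automorphisms of the $x_i$. This gives a sum decomposition of groupoids
$$
{}_{\underline x}(\Hak n A)_{\underline x}\simeq\sum_{\sigma\in\Sigma_n}F_\sigma .
$$

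Next we identify each summand $F_\sigma$. An arrow $(\underline a,\Gamma)\to(\underline a',\Gamma')$ in the fibre consists of a permutation $\tau$ and arrows $a_i\to a'_{\tau i}$. Lying over $\id_{\underline x}$ in $\Hak n X$ forces $\tau=\id$, since the $x_i$ are pairwise distinct points of the base and $s$ is strict. Hence an arrow is just a tuple of arrows $\theta_i\colon a_i\to a'_i$ in $A$ with $s(\theta_i)=\id$ and $\gamma'_i\, t(\theta_i)=\gamma_i$. This is exactly an arrow in $\prod_i {}_{x_i}A_{x_{\sigma i}}$ under the identification $(\underline a,(\sigma,\underline\gamma))\mapsto((a_i,\gamma_i))_i$. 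So at the groupoid level we get an isomorphism $F_\sigma\cong\prod_i{}_{x_i}A_{x_{\sigma i}}$.

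It remains to check that this is an equivalence of scalars, i.e.\ that the signs match. On the right-hand side, the sign of $((a_i,\gamma_i))_i$ in $\sign(\sigma)\prod_i(\cdots)$ is $p(\sigma)\prod_i\epsilon_{a_i}\pari(\gamma_i)$. Here we use Proposition~\ref{prop:End=SS}: the product of scalars is $*$ with multiplicative signs, and $\sign(\sigma)$ denotes the scalar $\pm1$ from \eqref{pm1}. On the left-hand side the sign is $\prod_i\epsilon_{a_i}\cdot p(\sigma)\prod_i\pari(\gamma_i)$, so the two agree. The sum decomposition is compatible with the sign splitting because scalar sums are disjoint unions. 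By Proposition~\ref{prop:End=SS}, an equivalence of groupoids over $\{\pp,\mm\}$ is an equivalence of scalars.

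The main obstacle is making the parity bookkeeping in $\Hak n X$ rigorous. There are two points to verify. First, the composite $2$-cell on the two-sided fibre (from $\epsilon^n/\Sigma_n$, the homotopy pullback square, and the map $\P^n/\Sigma_n\to\P$ induced by $p$) really yields $p(\sigma)$, and not $p(\sigma^{-1})$ or a conjugated variant; this is harmless since $p(\sigma)=p(\sigma^{-1})$. Second, using the strict model is legitimate, i.e.\ $s^n/\Sigma_n$ is still a fibration, so that the strict pullbacks compute the homotopy fibre. For the second point we would check the path-lifting property directly: arrows in $\Hak n X$ are permutations together with tuples of arrows, and lifts can be built componentwise using the fibration $s$.
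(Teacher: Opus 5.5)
Your proposal is correct and follows essentially the same route as the paper's proof. You replace $s$ by a fibration, use the strict model of the two-sided fibre from \ref{iMj}, and split by the permutation component $\sigma$ of $\Gamma$, which is constant on connected components because arrows of the fibre lie over $\id_{\underline x}$ and so involve no permutation. You then identify each summand coordinatewise with $\prod_i {}_{x_i}A_{x_{\sigma i}}$ and read off the extra sign $p(\sigma)$ from $\pari(\Gamma)$. Your explicit sign comparison $\prod_i\epsilon_{a_i}\cdot p(\sigma)\prod_i\pari(\gamma_i)$ versus $p(\sigma)\prod_i\epsilon_{a_i}\pari(\gamma_i)$, and your check that $s^n/\Sigma_n$ is again a fibration, spell out details the paper states only briefly.
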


\begin{cor}\label{cor:Det}
  For any endo-span $X \stackrel s \leftarrow A \stackrel t \to X$ in 
  $\ZZ$, we have an equality of rational numbers
  $$
  \norm{\Det \widehat{A}}  = \det \norm{\widehat{A}}.
  $$
\end{cor}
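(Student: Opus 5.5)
The plan is to take cardinality of the equivalence of scalars in Proposition~\ref{Det=det} (the objective Leibniz expansion) and then move the cardinality inside using Proposition~\ref{card-properties}. First we make the left-hand side precise. By Lemma~\ref{x!}, the orientable part $(\Hak n X)^\circ$ is connected, with basepoint $\underline x=(x_1,\ldots,x_n)$ and automorphism group $\underline x! = x_1!\cdots x_n!$. So by Definition~\ref{card-slice}, $\norm{\Det\widehat A}$ is the $1$-by-$1$ matrix with entry
$$
\norm{{}_{\underline x}(\Hak n A)_{\underline x}/\underline x!} \;=\; \norm{{}_{\underline x}(\Hak n A)_{\underline x}}\,/\,\norm{\underline x!} .
$$
The division is legitimate because $\underline x$ has no odd automorphisms, so the $\underline x!$-action is an action of scalars and the quotient rule of Proposition~\ref{card-properties} applies. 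Note also $\norm{\underline x!}=\prod_i \norm{x_i!}$.

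For the right-hand side, $\norm{\widehat A}$ is the $n\times n$ matrix indexed by $\pi_0(X^\circ)$ with respect to the same elementary states $\name{x_i}$. Its $(i,j)$-entry is $\norm{{}_{x_i}A_{x_j}}/\norm{x_j!}$. Now apply cardinality to the equivalence of Proposition~\ref{Det=det}. Equivalent scalars have equal cardinality, since an equivalence of scalars is a pair of equivalences of the positive and negative parts. Cardinality is additive on sums and multiplicative on the convolution products $*$ by Proposition~\ref{card-properties}. One extra observation is needed: for the negative $\overline S$ of a scalar (\S\ref{sec:negative-span}) the sign splitting is swapped, so $\norm{\overline S}=-\norm{S}$, and hence $\norm{\sign(\sigma)\, S}=\sign(\sigma)\norm S$. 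This gives
$$
\norm{{}_{\underline x}(\Hak n A)_{\underline x}} = \sum_{\sigma\in\Sigma_n}\sign(\sigma)\prod_{i} \norm{{}_{x_i}A_{x_{\sigma i}}} .
$$

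Finally divide by $\norm{\underline x!}=\prod_i\norm{x_i!}$. Because $\sigma$ is a bijection, $\prod_i \norm{x_i!}=\prod_i\norm{x_{\sigma i}!}$, so the denominator can be distributed termwise:
$$
\frac{\norm{{}_{\underline x}(\Hak n A)_{\underline x}}}{\norm{\underline x!}} = \sum_{\sigma}\sign(\sigma)\prod_i \frac{\norm{{}_{x_i}A_{x_{\sigma i}}}}{\norm{x_{\sigma i}!}} = \det\norm{\widehat A}
$$
by the classical Leibniz formula.

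It remains to check two points about bases. Changing the choice of elementary states (signs or basepoints) conjugates $\norm{\widehat A}$ by a diagonal $\pm1$ matrix and changes the basis vector of the $1$-dimensional space by the product of those signs. Neither affects the determinant or the $1$-by-$1$ entry, so the statement is basis-independent. The degenerate case $n=0$ gives $1=1$. The only real content is Proposition~\ref{Det=det}, which is assumed here. The step needing care is the symmetry-factor bookkeeping, namely that the quotient by $\underline x!$ on the left matches the codomain factors $x_{\sigma i}!$ on the right; this is handled by the reindexing via $\sigma$.
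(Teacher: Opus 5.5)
Your proof is correct and follows the paper's own argument. You take cardinality of the objective Leibniz expansion in Proposition~\ref{Det=det}, divide by $\norm{\underline x!}=\prod_i\norm{x_i!}$ (Lemma~\ref{x!}), and redistribute this as $\prod_i\norm{x_{\sigma i}!}$ to obtain the classical Leibniz formula for $\det\norm{\widehat A}$; your extra remarks ($\norm{\overline S}=-\norm{S}$, basis-independence, and the case $n=0$) are sound details the paper leaves implicit.
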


The Corollary will follow by taking cardinality of the 
equivalence from Proposition~\ref{Det=det} (and tracking a few symmetry 
factors). 
  
\begin{proof}[Proof of the Proposition.]
  The two-sided fibre   $$
  D := {}_{\underline x} (\Hak n A)_{\underline x}
  $$
  is the limit $D$ of the solid diagram
  $$
  \begin{tikzcd}[column sep={4.0em,between origins},row sep={3.5em,between origins}]
  && D \ar[ld, dotted] \ar[rd, dotted] &&  \\
  & P \ar[ld, dotted] \ar[rd, dotted] && Q \ar[ld, dotted] \ar[rd, dotted] & \\
  1 \ar[rd, "\name{\underline x}"']&& \Hak n A \ar[ld, "s"] \ar[rd, "t"'] && 1 \ar[ld, "\name{\underline x}"] \\
  & \Hak n X \ar[rd] && \Hak n X \ar[ld]& \\
  && \P  .  &&
  \end{tikzcd}
  $$
  For the convenience of the computation of this limit, we
  assume that $s$ is a fibration. (This is no loss of generality, as it
  could always be arranged by a fibrant replacement without changing the homotopy 
  types.)
  We can now use the description of the two-sided fibres given in
  Example~\ref{iMj}. The objects of $D$ are pairs $(\underline a,
  \gamma)$ where $\underline a \in \Hak n A$ is such that $s(\underline
  a) = \underline x$, together with an isomorphism $\gamma : t(\underline
  a) \simeq \underline x$ in $\Hak n X$. A morphism from $(\underline a,
  \gamma )$ to $(\underline a', \gamma ')$ is a morphism $\theta:
  \underline a \sim \underline a'$ in $\Hak n A$ with $s(\theta) =
  \id_{\underline x}$ and such that
    \[
    \begin{tikzcd}[column sep={4.8em,between origins}, row sep={2.7em,between origins}]
      t(\underline a)  \ar[dd, "t(\theta)"'] \ar[rd, "\gamma "]  & \\
       & \underline x \\
      t(\underline a') \ar[ru, "\gamma '"'] &
    \end{tikzcd}
    \]
  The condition $s(\theta) = 
  \id_{\underline x}$ implies that such a morphism $\theta$ cannot involve 
  permutation.

  The isomorphism $\gamma : t(\underline a) \isopil \underline x$ has a well-defined
  permutation component $\sigma$ because the entries in the list 
  $\underline x = (x_1,\ldots,x_n)$ are non-isomorphic. This means that $\gamma$ can be 
  written uniquely as
  $$
  t(\underline a) 
  \isopil (x_{\sigma 1},\ldots,x_{\sigma n}) 
  \isopil (x_1,\ldots,x_n)
  $$
  where the first part is a tuple of maps $(\gamma_1,\ldots,\gamma_n)$ 
  in $X$, with
  $
  \gamma_i : t(a_i) \isopil x_{\sigma i}
  $,
  and the second part is the permutation $\sigma^{-1}$. Different $\sigma$ lie in distinct 
  connected components of $D$
  because the connecting isomorphisms $\theta$ cannot involve permutation.
  It follows that $D$ splits as
  a sum indexed over permutations
  $$
  D = \sum_{\sigma\in \Sigma_n}
  D_\sigma .
  $$
  Inside each summand $D_\sigma$, everything is coordinate-wise:
  the points of $D_\sigma$ are
  pairs of tuples $(\underline a, \underline \gamma)$ 
  where  $\underline a = (a_1,\ldots,a_n)$ and $\underline \gamma = 
  (\gamma_1,\ldots,\gamma_n)$ with
  $\gamma_i : t(a_i) \isopil x_{\sigma i}$, and the morphisms in $D_\sigma$
  are also coordinate-wise, as already noted. It follows that $D_\sigma$ 
  is a product of groupoids
  $$
  D_\sigma = \prod_{i=1}^n D_{\sigma,i} ,
  $$
  where $D_{\sigma,i}$ is the groupoid of pairs $(a_i, \gamma _i)$ with
  $a_i$ in $A$ such that $s(a_i)=x_i$ and $\gamma_i: t(a_i) 
  \isopil x_{\sigma i}$ (and morphisms given by compatible
  $a_i\to a_i'$). This means we have
  $$
  D_{\sigma,i} \simeq {}_{x_i}(A)_{x_{\sigma i}} ,
  $$
  one of the two-sided fibres of the original span.

  We have now arrived at an equivalence of groupoids
  $$
  D \simeq \sum_{\sigma\in \Sigma_n} \prod_{i\in n} {}_{x_i}(A)_{x_{\sigma 
  i}} ,
  $$
  and it remains to check the signs.
  The entries ${}_{x_i}(A)_{x_{\sigma 
  i}}$ may have their own signs, which in the product agrees with the 
  signs of $D_\sigma$, 
  but in the definition of $D$ there are 
  new signs coming from the $2$-cell data in the square defining $Q$. The 
  signs are precisely $\sign(\sigma)$, so we arrive at the Leibniz 
  expansion formula, now stated at the groupoid level as an equivalence of 
  scalars
  $$
  D = \sum_{\sigma\in \Sigma_n} \sign(\sigma) \prod_{i\in n} {}_{x_i}(A)_{x_{\sigma i}} .
  $$
  
  \vspace{-1.2\baselineskip}
\end{proof}

\begin{proof}[Proof of the Corollary.]
  The cardinality of $\Det(\widehat A)$ is
  the cardinality of the two-sided fibre $D = {}_{\underline x} (\Hak n
  A)_{\underline x}$ divided by the symmetry factor $\norm{\underline
  x!}$. By Proposition~\ref{Det=det}, this cardinality expands to
  $$
  \sum_{\sigma\in \Sigma_n} \sign(\sigma) \prod_{i\in n} 
  \norm{{}_{x_i}(A)_{x_{\sigma i}}} \, / \, \norm{\underline x!}
  =
  \sum_{\sigma\in \Sigma_n} \sign(\sigma) \prod_{i\in n} \left( 
  \rule[-2pt]{0pt}{12pt}
  \norm{
  {}_{x_i}(A)_{x_{\sigma i}}}/\norm{x_{\sigma i}!} \right)
  $$
  (using $\underline x!= x_1!\cdots x_n!$ from Lemma~\ref{x!} and 
  permuting the factors), and this expression is precisely the classical 
  (number-level) 
  Leibniz expansion of the determinant of the rational matrix $\norm{\widehat A}$ (whose 
  entries are
  $
  \norm{{}_{x_i} A_{x_j}}/\norm{x_j !}
  $).
\end{proof}

\begin{blanko}{Remark.}
  In applications to combinatorics, the span $X \leftarrow A \to X$ is
  often just a span of finite sets (that is, a directed graph). In this 
  case, for
  any $k\in \N$, the span $\Hak k X \leftarrow \Hak k A \to \Hak k X$ has
  the property that all its two-sided fibres are discrete again. Indeed,
  the functor $\Hak k$ preserves discrete fibrations, so both legs in the
  span are discrete fibrations. In the calculation of the two-sided
  fibres, it follows that $P$ and $Q$ are discrete. Therefore finally the
  groupoid $D$ is discrete (but has signs). Furthermore, as already 
  observed in \ref{discrete-ext}, the automorphism groups of orientable 
  points in $\Hak k X$ are trivial, so the cardinality matrix of (the 
  orientable part of) $\Hak k A$ is integral.
\end{blanko}

\subsection{Immaterial parts of the determinant}
\label{immaterial}

In the determinant calculation above, we were only concerned with 
the `material' part, the one
actually visible in cardinality. As mentioned, the objective definition of 
determinant is much bigger and contains a lot of redundant stuff that 
cancels out. That stuff is there nevertheless. We do not know if it has 
any meaning. Here we just do a very small calculation to get at least a 
glimpse of it. 

\begin{blanko}{Example.}
  Consider the category given by a split idempotent: this category has two 
objects, $x$ and $y$, and five arrows, depicted like this:
\[
\begin{tikzcd}
x \ar[rr, bend left, "s"] && y \ar[ll,  bend left, "p"] 
\ar[loop right, distance=3em, start anchor={[yshift=1ex]east}, end 
anchor={[yshift=-1ex]east},  "e=s\circ p"]{}
\end{tikzcd}
\]
where $p \circ s = \id_x$, and where $s \circ p =: e$ is idempotent. 
We consider the span given by the underlying graph, so
the span in question has $X = \{x,y\}$ and $A = \{x,y,s,p,e\}$, with 
the left leg $d_1$ given by `domain' and the right leg $d_0$ by `codomain'. 
The cardinality matrix is
$\left(\begin{smallmatrix}1&1\\1&2\end{smallmatrix}\right)$
with determinant $1$.

The objective
determinant is given by taking $\Hak 2$ on the whole span. 
We see that $\Hak 2 X$ has four
elements, $xx,xy,yx,yy$ (where we write $xx$ rather than the proper 
$x\wedge x$, just to save space). There are arrows going back and forth between
$xy$ and $yx$, and there are non-identity (and therefore odd) automorphisms of $xx$ and $yy$.
So altogether $\Hak 2 X$ has three connected components, of which only
one is orientable, namely the one with $xy$ and $yx$. 
We know already that in order to
calculate the cardinality of the determinant, it is enough to compute the
$(xy,xy)$ entry of the matrix (the white entry in the following picture), 
but to illustrate the immaterial part, we 
actually calculate all $9$ two-sided fibres of $\Hak 2 A$ (with the 
immaterial entries in grey):

\colorlet{lightgrey}{gray!20}

\begin{center}
  \begin{tikzpicture}

    \node at (-0.3,6.0) {\footnotesize $xy$};	
    \node at (-0.3,3.6) {\footnotesize $xx$};	
    \node at (-0.3,1.2) {\footnotesize $yy$};	

    \node at (1.2,7.5) {\footnotesize $xy$};	
    \node at (3.6,7.5) {\footnotesize $xx$};	
    \node at (6.0,7.5) {\footnotesize $yy$};	

    \begin{scope}[shift={(0.0, 4.8)}]
      \node at (0.65,2.0) {\footnotesize $(xy,\id)$};	
      \node at (0.65,1.45) {\footnotesize $(xe,\id)$};	
      \node at (1.75,0.95) {\footnotesize $(sp,\tau)$};	
    \end{scope}
    \begin{scope}[shift={(0.0, 2.4)}]
      \fill[lightgrey] (0.0,0.0) rectangle ++(2.4,2.4);
      \node at (0.65,2.0) {\footnotesize $(xs,\id)$};	
      \node at (1.75,2.0) {\footnotesize $(sx,\tau)$};	
    \end{scope}
    \begin{scope}[shift={(0.0,0.0)}]
      \fill[lightgrey] (0.0,0.0) rectangle ++(2.4,2.4);
      \node at (0.65,2.0) {\footnotesize $(py,\id)$};	
      \node at (1.75,2.0) {\footnotesize $(yp,\tau)$};	
      \node at (0.65,1.45) {\footnotesize $(pe,\id)$};	
      \node at (1.75,1.45) {\footnotesize $(ep,\tau)$};	
    \end{scope}
    \begin{scope}[shift={(2.4,4.8)}]
      \fill[lightgrey] (0.0,0.0) rectangle ++(2.4,2.4);
      \node at (0.65,2.0) {\footnotesize $(xp,\id)$};	
      \node at (1.75,2.0) {\footnotesize $(xp,\tau)$};	
    \end{scope}
    \begin{scope}[shift={(2.4,2.4)}]
      \fill[lightgrey] (0.0,0.0) rectangle ++(2.4,2.4);
      \node at (0.65,2.0) {\footnotesize $(xx,\id)$};	
      \node at (1.75,2.0) {\footnotesize $(xx,\tau)$};	
    \end{scope}
    \begin{scope}[shift={(2.4,0.0)}]
      \fill[lightgrey] (0.0,0.0) rectangle ++(2.4,2.4);
      \node at (0.65,2.0) {\footnotesize $(pp,\id)$};	
      \node at (1.75,2.0) {\footnotesize $(pp,\tau)$};	
    \end{scope}
    \begin{scope}[shift={(4.8,4.8)}]
      \fill[lightgrey] (0.0,0.0) rectangle ++(2.4,2.4);
      \node at (0.65,2.0) {\footnotesize $(sy,\id)$};	
      \node at (1.75,2.0) {\footnotesize $(sy,\tau)$};	
      \node at (0.65,1.45) {\footnotesize $(se,\id)$};	
      \node at (1.75,1.45) {\footnotesize $(se,\tau)$};	
    \end{scope}
    \begin{scope}[shift={(4.8,2.4)}]
      \fill[lightgrey] (0.0,0.0) rectangle ++(2.4,2.4);
      \node at (0.65,2.0) {\footnotesize $(ss,\id)$};	
      \node at (1.75,2.0) {\footnotesize $(ss,\tau)$};	
    \end{scope}
    \begin{scope}[shift={(4.8,0.0)}]
      \fill[lightgrey] (0.0,0.0) rectangle ++(2.4,2.4);
      \node at (0.65,2.0) {\footnotesize $(yy,\id)$};	
      \node at (1.75,2.0) {\footnotesize $(yy,\tau)$};	
      \node at (0.65,1.45) {\footnotesize $(ey,\id)$};	
      \node at (1.75,1.45) {\footnotesize $(ey,\tau)$};	
      \node at (0.65,0.95) {\footnotesize $(ye,\id)$};	
      \node at (1.75,0.95) {\footnotesize $(ye,\tau)$};	
      \node at (0.65,0.4) {\footnotesize $(ee,\id)$};	
      \node at (1.75,0.4) {\footnotesize $(ee,\tau)$};	
    \end{scope}

    \draw[step=24mm,thin] (0,0) grid (72mm,72mm);

  \end{tikzpicture}
\end{center}

All elements consist of a pair of morphisms with a permutation.
(The pair of morphisms is written as juxtaposition, such as $ee$ in the last element.)
Elements with the identity permutation $\id$ are positive, and 
elements with the transposition $\tau$ are negative. 
Generally, according to the description from the proof of 
Proposition~\ref{Det=det},
the elements in the $ab$-$cd$ entry are 
\begin{quote}
pairs of morphisms 
$\quad a \stackrel f \to c \quad  b \stackrel g \to d \quad$
with identity permutation $\qquad\quad$ written $(fg,\id)$
\phantom{xxxx}
\end{quote}
or
\begin{quote}
pairs of morphisms
$\quad a \stackrel f \to d \quad  b \stackrel g \to c \quad$
with transposition $\tau$ (interchanging the two codomains).
\end{quote}

In the orientable part, the $xy$-$xy$-fibre 
of $\Hak 2 A$, the three possible elements are:

\noindent
$\bullet$ the pair of morphisms $xy$ going from object pair $xy$ to 
object pair $xy$ (hence 
having permutation $\id$), 

\noindent
$\bullet$ the pair $xe$ also going from $xy$ to $xy$ (with $\id$), 

\noindent
$\bullet$ the pair $sp$ going from $xy$ to the transposition of $xy$ 
(and hence has a $\tau$). 

Altogether, the `material part' of
the determinant has cardinality $2-1=1$ (in
  accordance with the cardinality matrix
  $\left(\begin{smallmatrix}1&1\\1&2\end{smallmatrix}\right)$).

  All the grey entries have net cardinality zero (and do not properly form part 
of the cardinality of the matrix). For example, 
in the $xx$-$xy$-entry, the pair of morphisms 
$xs$ goes from $xx$ to $xy$ (with $\id$), whereas $sx$ goes from $xx$ to 
the transposition of $xy$ (and hence has a $\tau$). The 
$yy$-$xy$-entry is similar.
In the $xx$ and $yy$ columns of the matrix,
since the codomain is a symmetric pair, each entry appears both with $\id$ 
and with $\tau$, clearly cancelling out.
\end{blanko}

\begin{blanko}{Historical remark and a challenge.}
  In the discrete case, a span $X\leftarrow A \to X$ is a directed graph. Interpretations of determinants (and the Leibniz
  expansion in particular) in terms of directed graphs go back at least
  to Jackson~\cite{Jackson:1977} and Foata~\cite{Foata:1980}, with many
  results on matrix identities beautifully unified by
  Zeilberger~\cite{Zeilberger:1985}, in terms of graphs weighted by
  commuting indeterminates. Their set-up is symbolic, and the signs are
  introduced somewhat ad hoc, though.
  
  The present set-up is rather a systematic framework at the objective
  level, with the feature that the signs arise completely naturally. We
  believe it should be possible (and pose it as an interesting challenge)
  to redo Zeilberger's proofs naturally in our set-up, since it
  seems that in each case the signs have a permutation origin. We hope
  that the clever cancellations (sign-reversing involutions) in
  \cite{Zeilberger:1985} will turn out to be just non-orientable parts of
  more natural groupoid equivalences.
\end{blanko}


\end{document}